\documentclass[11pt,reqno]{amsart}
\usepackage[margin=1.5in]{geometry}

\usepackage[utf8]{inputenc} 

\usepackage[doi=false, url=false, eprint=false, maxbibnames=10, firstinits=true]{biblatex}
\addbibresource{bibliography.bib}

\usepackage{amsfonts,bm,mathrsfs}
\usepackage{amsmath,amscd,textcomp,amssymb, mathtools}
\usepackage{nccmath}
\usepackage{enumitem}
\usepackage{moreenum}
\usepackage{graphicx}
\usepackage[all]{xy}
\usepackage{todonotes}
\usepackage{tikz}

\usepackage{etoolbox}
\usepackage{mathtools}
\usepackage{xcolor}
\definecolor{cite}{HTML}{EBCB8B}
\definecolor{link}{HTML}{B48EAD}
\definecolor{url}{HTML}{A3BE8C}
\usepackage{hyperref}
\hypersetup{
  linkcolor  = link!85!black,
  citecolor  = cite!85!black,
  urlcolor   = url!85!black,
  colorlinks = true,
  }
\usepackage{cleveref}


\newtheorem{theorem}{Theorem}[section]
\newtheorem{prop}[theorem]{Proposition}
\newtheorem{lem}[theorem]{Lemma}
\newtheorem{cor}[theorem]{Corollary}

\theoremstyle{definition}
\newtheorem{definition}[theorem]{Definition}

\theoremstyle{remark}
\newtheorem{remark}[theorem]{Remark}
\numberwithin{equation}{section}

\makeatletter
\patchcmd{\colon}{{:}}{\mathopen:}{}{}
\makeatother

\makeatletter
\def\namedlabel#1#2{\begingroup
   \def\@currentlabel{#2}%
   \label{#1}\endgroup
}
\makeatother

\newcommand{\verteq}{\rotatebox{90}{$\,=$}}
\newcommand{\equalto}[2]{\underset{\scriptstyle\overset{\mkern4mu\verteq}{#2}}{#1}}

\newcommand{\R}{{\mathbb R}}
\newcommand{\Z}{{\mathbb Z}}
\newcommand{\N}{{\mathbb N}}
\newcommand{\scrJ}{{\mathcal J}}

\newcommand{\cA}{{\mathcal{A}}}

\newcommand{\cM}{{\mathcal{M}}}

\newcommand{\y}{{\mathfrak y}}

\newcommand{\CZ}{\mu_{\operatorname{CZ}}}
\newcommand{\tCZ}{\mu^{\operatorname{tr}}_{\operatorname{CZ}}}

\newcommand{\Crit}{{\operatorname{Crit}}}
\newcommand{\ev}{{\operatorname{ev}}}

\makeatletter
\def\namedlabel#1#2{\begingroup
   \def\@currentlabel{#2}%
   \label{#1}\endgroup
}
\makeatother

\begin{document}

\title[Computing RFH of tentacular hyperboloids]{Computing the Rabinowitz Floer homology of tentacular hyperboloids}


\author{A. Fauck}
\address{Department of Mathematics, Humboldt-Universit\"{a}t zu Berlin,Germany.}
\curraddr{}
\email{fauck@math.hu-berlin.de}
\thanks{}

\author{W. J. Merry}
\address{Department of Mathematics, ETH Z\"{u}rich, Switzerland.}
\curraddr{}
\email{merry@math.ethz.ch}
\thanks{}

\author{J. Wi\'{s}niewska}
\address{Department of Mathematics, ETH Z\"{u}rich, Switzerland.}
\curraddr{}
\email{jagna.wisniewska@math.ethz.ch}
\thanks{}


\keywords{Rabinowitz Floer homology, non-compact hypersurfaces, Weinstein conjecture}

\date{\today}

\dedicatory{}

\begin{abstract}
We compute the Rabinowitz Floer homology for a class of non-compact hyperboloids $\Sigma\simeq S^{n+k-1}\times\mathbb{R}^{n-k}$. Using an embedding of a compact sphere $\Sigma_0\simeq S^{2k-1}$ into the hypersurface $\Sigma$, we construct a chain map from the Floer complex of $\Sigma$ to the Floer complex of $\Sigma_0$.
In contrast to the compact case, the Rabinowitz Floer homology groups of $\Sigma$ are both non-zero and not equal to its singular homology. As a consequence, we deduce that the Weinstein Conjecture holds for any strongly tentacular deformation of such a hyperboloid.
\end{abstract}
\maketitle

\section{Introduction}
Rabinowitz Floer homology is a homology theory for exact contact hypersurfaces in symplectic manifolds. It has various applications in symplectic and contact geometry: it provides obstructions for exact embeddings of contact manifolds, it can be used to distinguish contact structures, and it gives qualitative information on the Reeb flow on a contact manifold. Rabinowitz Floer homology was originally defined by Cieliebak and Frauenfelder in \cite{CieliebakFrauenfelder2009} for compact contact type hypersurfaces of exact convex symplectic manifolds, and has since been extended to cover more general compact hypersurfaces in more general symplectic manifolds. These include certain stable Hamiltonian structures \cite{CieliebakFrauenfelderPaternain2010}, negative line bundles \cite{AlbersKang2016}, and symplectisations of hypertight contact manifolds \cite{AlbersFuchsMerry2015}.  In a different direction, and more relevantly for the present article, the third author constructed in her PhD thesis \cite{Wisniewska2017} a class of so-called \textit{tentacular} Hamiltonians on $ \mathbb{R}^{2n}$ with \textit{non-compact} level sets for which Rabinowitz Floer homology is well defined. 

In the compact case, Rabinowitz Floer homology is eminently computable. For example, Rabinowitz Floer homology has been completely computed for unit cotangent bundles \cite{CieliebakFrauenfelderOancea2010,Abbon2009}, magnetic cotangent bundles \cite{Merry2011a}, Brieskorn spheres \cite{Fauck2015}, negative line bundles and certain annulus subbundles thereof \cite{AlbersKang2016,Venkatesh}. Many of these computations either rely on -- or can alternatively be proved by -- the intimate relationship of Rabinowitz Floer homology with symplectic homology \cite{CieliebakFrauenfelderOancea2010,CieliebakOancea2018}. 

The non-compact case is rather less tractable. In \cite{pasquotto2017,pasquotto2018} the class of non-compact hypersurfaces for which the Rabinowitz Floer homology could be defined was extended, and a rather general invariance result  was proved. However no explicit computations were presented. The goal of the present article is to remedy this deficit, by providing a complete calculation of the Rabinowitz Floer homology for a class of (deformations of) symplectic hyperboloids. As a byproduct, using the aforementioned invariance result we establish the validity of the Weinstein Conjecture for all such non-compact hypersurfaces. 

Here are the details. Consider a  quadratic Hamiltonian $H$ on $T^*\mathbb{R}^n$ of the form $H(z)\coloneqq \frac{1}{2}z^TAz -1$, where $A$ is a non-degenerate symmetric matrix. The H\"ormander classification of symplectic forms \cite{Hormander1995} tells us that by performing a linear symplectic change of coordinates, $H$ can be brought into a certain standard form, and moreover that these standard forms are classified by the eigenvalues of the Jordan decomposition\footnote{Here $\mathbb{J}$ stands for the standard complex structure on $\mathbb{C}^n \simeq T^*\R^n$.} of $\mathbb{J}A$. We are interested in the case where this linear symplectic change of coordinates yields a decomposition
\begin{equation}
    \label{defH}
    H(x,y) = \underset{\coloneqq H_0(x)}{\underbrace{\frac{1}{2}x^TA_0 x  - 1}} + \underset{ \coloneqq H_1(y)}{\underbrace{\frac{1}{2}y^TA_1 y}},
\end{equation}
for $(x,y) \in T^\ast\mathbb{R}^k\times T^\ast\mathbb{R}^{n-k}$, where $A_0$ is positive definite and  $\mathbb{J}A_1$ is hyperbolic. Here and throughout this paper, we implicitly assume that $1 \le k \le n-1$ (see Remark \ref{k0n} for the cases $k=0,n$). This implies that $A_1$ has signature $(n-k,n-k)$. If we denote the regular level sets 
\begin{equation}
    \label{Sigma-Sigma0}
    \Sigma\coloneqq H^{-1}(0)\qquad\textrm{and}\qquad \Sigma_0\coloneqq H_0^{-1}(0),
\end{equation}
then $\Sigma$ is a hyperboloid diffeomorphic to $S^{n+k-1}\times\R^{n-k}$, whereas $\Sigma_0$ is an ellipsoid diffeomorphic to $S^{2k-1}$. Note that $ \Sigma$ is never compact. 

We say that $H$ is \textit{tentacular} if it satisfies a number of carefully chosen growth conditions at infinity (see Definition \ref{def:TentHam}). For the reader not familiar with these conditions\footnote{The authors are aware the complement of this set of readers may consist solely of the authors.}, for the purposes of this Introduction they may regard ``tentacular'' to mean: either $A_1$ is symplectically diagonalisable or all eigenvalues $ \lambda_i$ of $\mathbb{J}A_1$ have $|\! \operatorname{Re} (\lambda_i)|>2$.  In this case we call $ \Sigma$ a \textit{tentacular hyperboloid}. 
\begin{theorem}\label{thm:compRFH}
Let $H$ be a tentacular quadratic Hamiltonian of the form \eqref{defH}. Then the Rabinowitz Floer homology of $H$ is given by:\footnote{For $k=n-1, RFH_{1-n}(H)=\Z_2\oplus \Z_2$.}
\[
RFH_*(H) \coloneqq \left\lbrace
\begin{array}{c c c}
\Z_2 &  &  *=1-n,-k,\\
0 & & \textrm{otherwise.}
\end{array}
\right.
\]
In particular $RFH_*(H)\neq 0$ and $RFH_*(H)\neq H_{*+n-1}(\Sigma)$.
\end{theorem}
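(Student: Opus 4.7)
The plan is to exploit the product splitting $H = H_0 + H_1$, which embeds the compact ellipsoid $\Sigma_0 \subset T^\ast\R^k$ into $\Sigma \subset T^\ast\R^n$ via $x \mapsto (x,0)$, and thereby reduce the computation on the non-compact $\Sigma$ to the compact setting of $\Sigma_0$, where displaceability forces $RFH_\ast$ to vanish.

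First I would analyse $\Crit(\cA^H)$. Since $\mathbb{J}A_1$ is hyperbolic, the flow of $X_{H_1}$ admits no non-constant periodic orbits; hence every non-constant closed Reeb orbit on $\Sigma$ has $y \equiv 0$ and lies on the embedded $\Sigma_0$. This sets up a canonical bijection between the non-constant critical points of $\cA^H$ and those of $\cA^{H_0}$, while the Morse--Bott component of constants is $\Sigma$ itself (resp.\ $\Sigma_0$) for $H$ (resp.\ $H_0$). Along any orbit $\gamma \subset \Sigma_0$ the symplectic trivialisation splits as $T^\ast\R^k \oplus T^\ast\R^{n-k}$, so the Conley--Zehnder indices transform additively: $\CZ_{\Sigma}(\gamma) = \CZ_{\Sigma_0}(\gamma) + \sigma$, where $\sigma$ is an integer computed from the constant hyperbolic path $t\mapsto e^{t\mathbb{J}A_1}$; well-definedness of $\sigma$ is one of the key roles of the tentacularity hypothesis on $A_1$.

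Next I would construct the chain map $\Phi\colon CF_\ast(H) \to CF_{\ast-\sigma}(H_0)$ promised in the abstract: on non-constant generators $\Phi$ is the above identification, while on the Morse--Bott constant locus it is built from the geometry of the inclusion $\Sigma_0 \hookrightarrow \Sigma$ (e.g.\ via a continuation/cascade comparison of the two Rabinowitz functionals). The chain-map property reduces to a comparison of Floer moduli: trajectories for $\cA^{H_0}$ lift tautologically to trajectories for $\cA^H$ with $y \equiv 0$, and tentacular $C^0$-bounds from \cite{Wisniewska2017, pasquotto2017} rule out extraneous trajectories escaping along the hyperbolic directions. This is the main technical obstacle: it requires combining the non-compact compactness machinery with a Morse--Bott cascade analysis for the positive-dimensional critical manifolds, and verifying that the standard gluing and orientation conventions assemble $\Phi$ into an honest chain map rather than merely a map in homology.

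Finally, because $\Sigma_0$ bounds a convex domain in $T^\ast\R^k$ it is displaceable, so Cieliebak--Frauenfelder gives $RFH_\ast(H_0) = 0$. The long exact sequence induced by $\Phi$ then expresses $RFH_\ast(H)$ in terms of the kernel or cokernel of $\Phi$. Since $\Phi$ is a bijection on non-constant generators, this kernel/cokernel is quasi-isomorphic to a small complex built from the Morse--Bott constant loci alone, namely the algebraic cone of the equatorial inclusion $S^{2k-1}\hookrightarrow S^{n+k-1}$ arising from $\Sigma \simeq S^{n+k-1}\times \R^{n-k}$ and $\Sigma_0 \simeq S^{2k-1}$. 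A direct calculation yields $\Z_2$-homology in exactly two degrees, and tracking the Morse--Bott offset together with the $\sigma$-shift pins these degrees to $1-n$ and $-k$; the footnoted case $k=n-1$ corresponds to the coincidence $1-n=-k$, which accounts for the extra $\Z_2$-summand.
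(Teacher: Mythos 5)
Your overall strategy -- reduce to the compact ellipsoid $\Sigma_0$, use displaceability to kill $RFH(H_0)$, and recover $RFH(H)$ from what is left over on the constant loci -- is the same as the paper's. But there are two genuine gaps at exactly the two places where the real work happens.

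First, the chain map. You propose that $\Phi$ is the tautological identification on non-constant generators, with the chain-map property following because ``trajectories for $\cA^{H_0}$ lift tautologically to trajectories for $\cA^H$ with $y\equiv 0$'' and $C^0$-bounds ``rule out extraneous trajectories.'' This does not work: the $L^\infty$-bounds give compactness of the moduli spaces, but they say nothing about whether a Floer cylinder for $\cA^H$ whose asymptotes lie in the slice $\{y=0\}$ stays in that slice. For a generic (non-split, $s$- and $t$-dependent) $J$ it will not, and even for split $J$ one would need an automatic-transversality statement in the normal directions to conclude that the two differentials agree -- a statement you neither formulate nor prove. The paper is explicit that ``there is no obvious relation between the space of negative gradient flow lines for these two functionals,'' and instead builds $\psi\colon CF_*(H)\to CF_*(H_0)$ by counting solutions of a hybrid Abbondandolo--Schwarz problem: a half-cylinder for $H$ glued at $s=0$ to a half-cylinder for $H_0$ along a Lagrangian coupling condition. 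The resulting map is not the identity on generators; it is upper-triangular for the action filtration with $1$'s on the diagonal, which is proved via sharp energy estimates (Lemma \ref{energy}) and automatic transversality at the stationary solutions (Proposition \ref{prop:trans}). Without some substitute for this, your $\Phi$ is not known to be a chain map and the rest of the argument has nothing to run on.

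Second, the endgame. You leave the index shift $\sigma$ undetermined; the paper computes (Proposition \ref{prop:muCZ}) that the hyperbolic path $\exp(t\mathbb{J}A_1)$ contributes zero to the Conley--Zehnder index, because hyperbolicity of $\mathbb{J}A_1$ (not tentacularity, which only enters for compactness) means there are no crossings and $A_1$ has signature zero; without $\sigma=0$ you cannot pin the degrees to $1-n$ and $-k$. More seriously, the ``small complex built from the Morse--Bott constant loci'' is not the cone of the map induced by the inclusion $S^{2k-1}\hookrightarrow S^{n+k-1}$: the chain map goes from $CF(H)$ to $CF(H_0)$, so on the action-zero part it induces a wrong-way map $H_{*+n-1}(\Sigma)\to H_{*+k-1}(\Sigma_0)$, which the paper identifies as the Umkehr map $j_!\circ r_*$ -- an isomorphism $H_{n+k-1}(\Sigma)\to H_{2k-1}(\Sigma_0)$ and zero in all other degrees. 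Feeding the cone of $j_*$ instead of $j_!\circ r_*$ into your computation produces the wrong degrees, so this identification must be made and justified (in the paper it follows from the explicit description of $\mathcal{M}_{\operatorname{hyb}}(x^\pm,z^\pm)$ and the transversality of $\sigma^{-1}(W^u_{f_0}(x^\pm)\times\chi_0)$ with $W^s_f(z^\pm)$ inside $\Sigma$).
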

The Weinstein Conjecture is obvious for the hyperboloids contained in Theorem \ref{thm:compRFH}. One of the strengths of Floer theoretical calculations, however, is that they are invariant under controlled perturbations. Therefore as an immediate corollary to Theorem \ref{thm:compRFH}, we obtain:
\begin{cor}
\label{cor:WC}
If $H$ is a Hamiltonian as in Theorem \ref{thm:compRFH} and $\{\Sigma_s\}_{s\in [0,1]}$ is a smooth $1$-parameter family of compact perturbations of $\Sigma\coloneqq H^{-1}(0)$ through strongly tentacular hypersurfaces, then each $\Sigma_s$ carries a closed characteristics.
\end{cor}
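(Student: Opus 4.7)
\bigskip
\noindent\textbf{Proof plan for Corollary \ref{cor:WC}.} The natural strategy combines two ingredients: the invariance of Rabinowitz Floer homology under compact, strongly tentacular deformations (established in \cite{pasquotto2017, pasquotto2018}), and the explicit calculation of Theorem \ref{thm:compRFH}. I would argue by contradiction.

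First I would apply the invariance result to the family $\{\Sigma_s\}_{s\in[0,1]}$: since each $\Sigma_s$ is a compact perturbation of $\Sigma = H^{-1}(0)$ through strongly tentacular hypersurfaces, one obtains $RFH_*(\Sigma_s) \cong RFH_*(H)$ for every $s$. By Theorem \ref{thm:compRFH}, this group is non-zero and, importantly, does not agree with $H_{*+n-1}(\Sigma_s)$, because $\Sigma_s$ is diffeomorphic to $\Sigma \simeq S^{n+k-1}\times \R^{n-k}$ whose singular homology lives only in degrees $0$ and $n+k-1$, whereas $RFH_*(H)$ is concentrated in degrees $1-n$ and $-k$.

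Next, suppose for contradiction that for some $s_0$ the hypersurface $\Sigma_{s_0}$ carries no closed characteristic. Critical points of the Rabinowitz action functional for a defining Hamiltonian of $\Sigma_{s_0}$ are pairs $(u,\eta)$ with $u(S^1)\subset \Sigma_{s_0}$ and $\dot u = \eta X_{H_{s_0}}(u)$; those with $\eta \neq 0$ correspond precisely to reparametrised closed characteristics, while those with $\eta = 0$ are constants. Under our assumption, the critical set reduces to the Morse--Bott manifold of constants, which is diffeomorphic to $\Sigma_{s_0}$. A standard Morse--Bott computation (in the spirit of \cite{CieliebakFrauenfelder2009}, adapted to the tentacular setting developed in \cite{Wisniewska2017, pasquotto2018}) would then identify $RFH_*(\Sigma_{s_0})$ with the shifted singular homology $H_{*+n-1}(\Sigma_{s_0})$, contradicting the non-equality from the previous paragraph.

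The main obstacle is ensuring that the Morse--Bott identification actually goes through in this non-compact tentacular context: one must check that the analytic framework of \cite{Wisniewska2017, pasquotto2018} accommodates a Morse--Bott perturbation of the action functional, produces compact moduli spaces of gradient flow lines (or cascades), and yields the correct degree shift by $n-1$ at a constant critical loop. Granting this — as should be essentially automatic, since the same analytic machinery already underpins the very definition of $RFH_*$ in the tentacular regime — the contradiction closes and each $\Sigma_s$ must carry a closed characteristic.
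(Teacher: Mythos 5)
Your proposal is correct and matches the paper's (deliberately terse) argument: the paper treats the corollary as immediate from the invariance of Rabinowitz Floer homology under compact strongly tentacular perturbations together with the computation of Theorem \ref{thm:compRFH}, exactly as you do; the only cosmetic difference is that the paper's phrasing (``$RFH^+(H)$ is generated by non-constant periodic orbits and thus vanishes in the absence of closed characteristics'') suggests contradicting $RFH^+\neq 0$ rather than $RFH_*\neq H_{*+n-1}(\Sigma)$, which is an equivalent route. Your worry about the Morse--Bott identification is moot: in this framework the complex is by definition generated by $\Crit(f)$ on $\Crit(\cA^H)$, so if the only critical points are the constants the complex is literally the Morse complex of $f$ on $\Sigma$ with the degree shift of \eqref{muRF}, as the paper already records via $RFH^0_*(H)\cong H_{*+n-1}(\Sigma)$.
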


\begin{remark}
\label{k0n}
	\textbf{The cases $k = 0$ and $k = n$:} We assume throughout this paper that $1 \le k \le n-1$. Let us briefly comment on the  two other cases. For $k=0$, the Hamiltonian $H$ from \eqref{defH} is of the form $H=H_1-1$, and the proof of Lemma \ref{lem:orbits} below shows that the Hamiltonian vector field of $H_1$ on $ \Sigma$ has no closed orbits. Thus Theorem \ref{thm:compRFH} holds trivially as $RFH(H)$ agrees by definition with the Morse homology of $\Sigma$ (with shifted degree). Meanwhile if $k = n$ then $H = H_0$, and the hypersurface is compact. The Rabinowitz Floer homology for such hypersurfaces vanishes by \cite[Thm.\ 1.2]{CieliebakFrauenfelder2009}. The Weinstein Conjecture is trivially false for $k =0$, while it holds true for $k=n$, as proved by Viterbo in \cite{viterbo_1987} for compact contact type hypersurfaces in $T^* \mathbb{R}^n$.
\end{remark}

\begin{remark}
Symplectic homology has also recently been extended to the non-compact setting. In \cite{cieliebak_eliashberg_polterovich_2017}, Cieliebak, Eliashberg and Polterovich define and compute the symplectic homology for a certain subclass of the symplectic hyperboloids we consider here. Their computations are consistent with ours, and we conjecture that the long exact sequence relating symplectic homology and Rabinowitz Floer homology \cite{CieliebakFrauenfelderOancea2010} extends to the non-compact setting. In a slightly different direction, Ganatra, Pardon and Shende \cite{ganatra_pardon_shende_2019} have defined the symplectic homology for a class of Liouville manifolds with boundary -- Liouville sectors -- using the compactness methods of Groman \cite{groman_2020}. It seems likely that the hypersurfaces we consider here can also be fitted into this framework. We hope to discuss this elsewhere. 
\end{remark}

\textsc{Outlook:} In the compact world, both Rabinowitz Floer homology and symplectic homology have been profitably used to study orderability problems for compact contact manifolds \cite{FraserPolterovichRosen2012,AlbersFuchsMerry2015, albers_fuchs_merry_2017,albers_merry_2018,chantraine}. Cieliebak, Eliashberg and Polterovich initiated the study of orderability problems for non-compact contact manifolds in \cite{cieliebak_eliashberg_polterovich_2017} using symplectic homology. Our companion computation shows that Rabinowitz Floer homology is also well suited to this problem. We will return to these questions in a sequel to the present paper.

In another direction, we note that our results are consistent (as they should be!) with van den Berg, Pasquotto and Vandervorst's earlier Weinstein Conjecture \cite{BergPasq2009} results for non-compact hypersurfaces in $T^*\mathbb{R}^n$, which were based on variational methods. These results were later extended to cover unit cotangent bundles of Riemannian manifolds with flat ends \cite{berg_pasquotto_rot_vandervorst_2016,suhr_zehmisch_2016}. It is an interesting -- albeit, formidable -- problem to try and generalise the Floer-theoretical methods used in the present article to cover this setting. 

Finally, Miranda and Oms \cite{miranda_oms_2020} have very recently used the methods from $b$-symplectic and contact geometry to study the Weinstein Conjecture for certain non-compact hypersurfaces, including examples in the planar restricted circular three-body problem.\\

\textsc{Sketch of the proof of Theorem \ref{thm:compRFH}:}
The \textit{Rabinowitz action functional} for a Hamiltonian $H\colon  T^*\R^n\rightarrow\mathbb{R}$ associates to a pair $(v,\eta)$ of a loop $v\colon S^1 \to  T^*\R^n$ and a real number $\eta$ its action by
\[
\mathcal{A}^H(v, \eta) \coloneqq  \int_{S^1}v^{*}\lambda-\eta\int_{S^1}H(v(t))\,dt.
\]
Here, $\lambda$ is a primitive of $\omega = d p \wedge d q$. The critical set $\Crit(\cA^H)$ of $\cA^H$ consists of pairs $(v,\eta)$, such that $v(S^1)\subset H^{-1}(0)$ and $\partial_t v= \eta X_H(v)$, where $X_H$ is the Hamiltonian vector field of $H$. The positive $L^2$ gradient equation for this functional,  which we call the Rabinowitz Floer equations for $H$, is the following Floer equation for $v\colon \R \times S^1 \rightarrow T^*\mathbb{R}^n$ coupled with an ODE for $\eta\colon  \R \rightarrow \R$:
\begin{equation*}
 \left(\begin{array}{c}
\partial_{s}v\\
\partial_s \eta
\end{array}\right)
 = \left( \begin{array}{c}
-J(v,\eta,t)\bigl[\partial_{t}v-\eta X_{H}(v)\bigr] \\
-{\int_0^1 H(v) \,dt \quad}\end{array} \right).
\end{equation*}
The standard counting of rigid solutions of this equation defines a boundary operator
\[
\partial \colon  CF_*(H,f) \rightarrow CF_{*-1}(H,f),
\]
on a graded $\Z_2$-vector space $ CF_*(H,f)$ generated by critical points of an auxiliary  coercive Morse function $f$ on $\Crit(\cA^H)$, which generically is a countable union of finite dimensional manifolds. The $\Z$-grading of $CF_*(H,f)$ is defined by the transverse Conley-Zehnder index of periodic orbits plus the signature index of critical points of $f$. We use the $ \mathbb{Z}$-grading convention of \cite{CieliebakFrauenfelderOancea2010}, which differs from the $ \mathbb{Z}+ \frac{1}{2}$ grading convention of \cite{CieliebakFrauenfelder2009} by a factor of $1/2$. The homology of the chain complex $(CF_*(H,f),\partial)$ is called the \textit{Rabinowitz Floer homology} of $H$ and is denoted by $ RFH_*(H)$. For the Hamiltonians $H$ that we consider in Theorem \ref{thm:compRFH}, the fact that $ RFH_*(H)$ is well defined and independent of the auxiliary data used to construct it is proved in \cite{pasquotto2017, pasquotto2018}.

One can also play the same game with the Hamiltonian $H_0$ on $T^*\mathbb{R}^k$, thus yielding another Rabinowitz Floer homology $ RFH_*(H_0)$. This construction is rather easier as $ \Sigma_0$ is compact (and falls under the remit of the setup originally conceived in \cite{cieliebak2009}). However as $ \Sigma_0$ is displaceable in $ T^*\mathbb{R}^k$, by \cite[Thm.\ 1.3]{cieliebak2009} the Rabinowitz Floer homology of $H_0$ is not particularly interesting 
\[
RFH_*(H_0) = 0.
\]
All is not lost though: if one restricts to the subcomplex generated by orbits $(v,\eta)$ with $ \eta > 0$ -- a gadget we refer to as the \textit{positive Rabinowitz Floer homology} and denote by $RFH_*^+(H_0)$ -- then we obtain something non-zero:
\[
RFH^+_*(H_0) = \begin{cases} \mathbb{Z}_2, & * = k,\\
0, & \text{otherwise.}
\end{cases}
\]
The proof of Theorem \ref{thm:compRFH} uses the ``hybrid problem'' technique pioneered by Abbondandolo and Schwarz in their seminal paper \cite{AbbondandoloSchwarz2010}. The starting observation is that any periodic orbit of $X_H$ on $ \Sigma$ takes the form 
\[
\gamma(t) = ( \gamma_0(t), 0),
\]
where $ \gamma_0$ is a periodic orbit of $X_{H_0}$ on $ \Sigma_0$. This sets up an inclusion
\[
i \colon  \Crit(\cA^{H_0}) \hookrightarrow \Crit(\cA^H),
\]
which restricted to the non-constant orbits gives a bijection between $\Crit(\cA^{H_0})\setminus (\Sigma_0{\times}\{0\})$ and $\Crit(\cA^H)\setminus (\Sigma{\times}\{0\})$.
Despite this relationship between their critical points, there is no obvious relation between the space of negative gradient flow lines for these two functionals, and hence no reason to hope that $i$ induces a chain map. Nevertheless, we prove that for a particular choice of Morse functions $f$ and $f_0$:
\begin{theorem}
\label{thm:iso}
There exists a chain map 
\[
\psi \colon CF_*(H,f) \to CF_*(H_0,f_0),
\]
which induces a sequence of homomorphisms $\Psi$, such that the following diagram of two long exact sequences commutes:
\[
\xymatrix{
 \dots \ar[r] &H_{*+n -1}(\Sigma) \ar[r] \ar[d]_{\Psi^0}
                 & RFH_{*}^{\geq 0}(H) \ar[r] \ar[d] &    RFH^+_*(H) \ar[r] \ar[d]^{\Psi^+} & \dots \\
\dots \ar[r]  & H_{*+k -1}(\Sigma_0)  \ar[r]   & RFH_{*}^{\geq 0}(H_0) \ar[r] & RFH^+_*(H_0) \ar[r]  & \dots \\
             }
\]
Moreover $\Psi^+$ is an isomorphism, whereas $\Psi^0$ is a composition of an isomorphism coming from the retraction  $\Sigma\cong S^{n+k-1}\times\mathbb{R}^{n-k}\rightarrow S^{n+k-1}$ and an Umkehr map.
\end{theorem}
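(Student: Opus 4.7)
The plan is to build $\psi$ via a hybrid moduli space in the spirit of Abbondandolo--Schwarz \cite{AbbondandoloSchwarz2010}. For generators $p$ of $CF_*(H,f)$ and $q$ of $CF_*(H_0, f_0)$, let $\scrM(p,q)$ be the space of pairs $(u_-, u_+)$ where $u_- = (v_-, \eta_-)$ solves the Rabinowitz Floer equation for $H$ on $(-\infty, 0]\times S^1$, asymptotic to $p$ at $s = -\infty$; $u_+ = (v_+, \eta_+)$ solves the Rabinowitz Floer equation for $H_0$ on $[0,\infty)\times S^1$, asymptotic to $q$ at $s = +\infty$; and at the seam $s=0$ the matching condition $v_-(0,t) = (v_+(0,t), 0) \in T^*\R^k \times T^*\R^{n-k}$, $\eta_-(0) = \eta_+(0)$ holds. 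Enlarged by the usual cascade picture to incorporate the Morse functions $f$ and $f_0$, the $\Z_2$-count of rigid elements of $\scrM(p,q)$ defines $\psi$; a Fredholm index computation accounting for the codimension-$2(n-k)$ seam constraint confirms that $\psi$ has degree zero.

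Transversality of $\scrM(p,q)$ is achieved by generic $s$-dependent choices of almost complex structures on each half-cylinder. Compactness is the delicate point given that $\Sigma$ is non-compact: the tentacular a priori estimates of \cite{pasquotto2017, pasquotto2018} control $v_-$ on the $H$-side, and the seam $\{y=0\}$ anchors the trajectory to a bounded region in the hyperbolic $y$-direction. Standard boundary analysis of the $1$-dimensional components of $\scrM(p,q)$ then yields $\psi \partial_H = \partial_{H_0} \psi$. Because $H$ and $H_0$ agree on $T^*\R^k \times \{0\}$ and the primitive $\lambda$ restricts correctly, the matching condition forces equality of actions at the seam; combined with monotonicity of $\cA^H$ and $\cA^{H_0}$ along their negative gradient flows this gives $\cA^H(p) \ge \cA^{H_0}(q)$ on every non-empty moduli space. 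Consequently $\psi$ is compatible with the action filtration, restricts to $CF^+$, descends to the zero-action quotient, and induces the commuting ladder of long exact sequences in the statement.

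For $\Psi^+$ to be an isomorphism, I would exhibit canonical ``constant'' solutions of the hybrid problem: whenever $p = i(q)$, the pair $(u_-, u_+)$ with $v_-(s,t) \equiv (v_0(t), 0)$, $v_+(s,t) \equiv v_0(t)$ and $\eta_\pm \equiv \eta_0$ satisfies all constraints. The linearisation of the $y$-equation at such a solution is an elliptic problem on a half-cylinder with $y = 0$ at both the seam and the $-\infty$ asymptote; hyperbolicity of $\mathbb{J} A_1$ forces the asymptotic operator to have no kernel, so the only bounded solution is $y \equiv 0$. Together with an invariance-under-deformation argument (scaling $H_1$ to a symplectically diagonal model), this shows the constant solutions are the unique rigid elements, whence $\psi^+$ intertwines the generators via $i$ and $\Psi^+$ is an isomorphism.

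For $\Psi^0$, the zero-action subcomplex on each side is a shifted Morse complex of the Bott-degenerate critical manifolds $\Sigma$ and $\Sigma_0$ (shifts $n-1$ and $k-1$ coming from the Robbin--Salamon indices of the constant orbits). In the Morse--Bott limit the hybrid problem degenerates into a concatenation of gradient flows: a $-\nabla f$-trajectory in $\Sigma$ terminating on $\Sigma \cap (T^*\R^k\times\{0\}) = \Sigma_0$, followed by a $-\nabla f_0$-trajectory in $\Sigma_0$. For suitably chosen $f$ and $f_0$ this realises the composition of the retraction isomorphism $H_*(\Sigma) \cong H_*(S^{n+k-1})$ with the Umkehr map for the codimension-$(n-k)$ embedding $\Sigma_0 \hookrightarrow S^{n+k-1}$. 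The hardest step throughout is compactness of the hybrid moduli spaces in the non-compact phase space: one must extend the tentacular $L^\infty$-bounds across the seam and rule out escape of the $y$-component along the hyperbolic unstable manifold of $H_1$, which is ultimately where the tentacular hypothesis enters essentially.
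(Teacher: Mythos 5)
Your overall architecture (hybrid half-cylinder problem, cascades, action filtration, stationary solutions for the diagonal, Morse-theoretic identification of $\Psi^0$) matches the paper's, but your seam condition contains a genuine error that breaks the construction. You impose $v_-(0,t)=(v_+(0,t),0)$ with $0\in T^*\mathbb{R}^{n-k}$, i.e.\ the entire $T^*\mathbb{R}^{n-k}$-component of the boundary loop is pinned to the origin --- you even record this as a ``codimension-$2(n-k)$ seam constraint''. The correct condition is that this component lies in the zero section $\mathbb{R}^{n-k}\times\{0\}\subset T^*\mathbb{R}^{n-k}$: the position coordinates are \emph{free} and only the momenta vanish. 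This is not cosmetic. After folding, the matching condition must be a \emph{Lagrangian} (totally real of maximal dimension) boundary condition for the Cauchy--Riemann operator on the half-cylinder; your point condition restricts the boundary trace to an isotropic submanifold of dimension $2k<n+k$, which is an overdetermined boundary value problem --- the linearisation acquires infinite-dimensional cokernel and the moduli problem is not Fredholm, so the advertised ``index computation'' cannot be carried out. Downstream, the zero-action stationary solutions form the family $\Sigma_0\times(\mathbb{R}^{n-k}\times\{0\})$ only with the Lagrangian condition; with your condition they reduce to $\Sigma_0\times\{0\}$, the incidence condition defining $\psi^0$ has codimension $2(n-k)$ instead of $n-k$, the expected dimension of $\mathcal{M}(x^+,z^+)$ becomes $k-n<0$ while the actual set is a point (so transversality fails and cannot be restored by perturbing $J$ at stationary solutions), and $\Psi^0$ would not realise the codimension-$(n-k)$ Umkehr map you assert at the end. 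The Lagrangian condition is also what the compactness argument at the seam uses: $\lambda$ and $H_1$ both vanish on the zero section, which is what makes the boundary term in the maximum-principle estimate vanish.

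Two further points on the isomorphism statements. For $\Psi^+$ you claim the constant solutions are the \emph{unique} rigid elements; this is stronger than needed and not what one can prove. The correct mechanism is that the energy identity makes the matrix of $\psi^+$ upper triangular for the action ordering, and the stationary solutions --- shown to be regular by an explicit automatic-transversality argument --- contribute exactly $1$ to each diagonal entry; non-stationary rigid solutions may well exist off the diagonal but do not affect invertibility. Relatedly, your transversality discussion at the constant solutions only addresses the hyperbolic $y$-directions; since these solutions come in Morse--Bott families where perturbing $J$ is unavailable, one must also control the kernel in the $T^*\mathbb{R}^k$-directions and the Lagrange multipliers (in the paper this is done via a convexity argument for $\varphi(s)=\|(z,\zeta)(s)-(y_0,y_1)\|^2_{L^2}$ together with a cancellation of the two second variations across the seam using the anti-symplectic involution $(q,p)\mapsto(q,-p)$), which your sketch omits.
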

The construction of $\psi$ is based on counting solutions of the following hybrid problem: we consider tuples $(v^-,\eta^-,v^+,\eta^+)$ where
\begin{align*}
v^+ \colon  [0,\infty) \times S^1 &\rightarrow T^*\mathbb{R}^n,&  \eta^+ \colon  [0, \infty) &\to \R,\\
v^- \colon  (-\infty,0] \times S^1 &\rightarrow  T^*\mathbb{R}^k,& \eta^- \colon  (- \infty, 0] &\to \R,
\end{align*}
are solutions of the Rabinowitz Floer equations, $(v^+,\eta^+)$ for $H$ and $(v^-,\eta^-)$ for $H_0$, with prescribed asymptotics at $\pm\infty$ and satisfying the following coupling condition at $s=0$: identifying $T^*\mathbb{R}^n \cong T^*\mathbb{R}^k \times  \mathbb{R}^{n-k} \times \mathbb{R}^{n-k}$, we require
\[
 v^+(0,t) = (v^-(0,t), *, 0), \qquad \eta^-(0) = \eta^+(0).
\]
This coupling condition can be seen as a Lagrangian boundary condition for $\mathbb{R}^{2n+2k}$-valued maps on a half-cylinder, from which it follows that the hybrid problem is Fredholm. Precompactness of the moduli spaces of solutions $(v^-,\eta^-,v^+,\eta^+)$ with fixed asymptotes is established in Section \ref{ssec:bounds}. The chain map $ \psi$ is then defined by counting rigid solutions of the hybrid problem. To prove the remaining statements of Theorem \ref{thm:iso}, we show that automatic transversality holds at stationary solutions of the hybrid problem. Combining this with sharp energy estimates implies that if we order the critical points of $\mathcal{A}^H$ and $ \mathcal{A}^{H_0}$ by increasing action, then the matrix representation of $ \psi$ is upper triangular, and moreover the diagonal entries are all equal to 1, except for a single 0 coming from the minimum of $f$ on $ \Sigma$. From this, Theorem \ref{thm:iso} follows, and hence so does Theorem \ref{thm:compRFH}. \\

\textit{Acknowledgements:} The first and third authors are supported by the SNF grant \href{http://p3.snf.ch/project-182564}{Periodic orbits on non-compact hypersurfaces}. We thank Kai Cieliebak for pointing out \cite{cieliebak_eliashberg_polterovich_2017} to us, and for helpful remarks during the preparation of this article.
\section{Preliminaries}
\label{section_setting}
We begin with a brief discussion of Rabinowitz Floer homology, with a special emphasis on the non-compact framework. To keep the exposition concise we place ourselves throughout in the linear setting of $T^*\mathbb{R}^m$. Whilst this restriction is at present necessary for the construction of Rabinowitz Floer homology for non-compact hypersurfaces, this is by no means the case for compact hypersurfaces. We refer the reader to the survey article \cite{albers2012} for a leisurely introduction to the various settings that (compact) Rabinowitz Floer homology can be defined.\\

\textbf{Sign Conventions}: Let $\omega = d p \wedge dq$ denote the standard symplectic form on $T^* \mathbb{R}^m = \mathbb{R}^m \times (\mathbb{R}^m)^*$. We identify $T^* \mathbb{R}^m$ with the complex vector space $(\mathbb{C}^m, i)$ via the map $(q,p) \mapsto q + ip$, and denote by $\mathbb{J}$ the corresponding complex structure.\footnote{As a general notational guide to the reader, throughout the rest of this article, we work with compact hypersurfaces in $T^*\mathbb{R}^k$ and non-compact hypersurfaces in $T^* \mathbb{R}^n$, where $ 0< k < n$. In this preliminary section we treat both cases simultaneously, and hence use the letter $m$ instead.} Explicitly, this means that 
\begin{equation}
    \label{pref-J}
   \mathbb{J} = \begin{pmatrix} 0 &\operatorname{Id} \\  -\operatorname{Id} & 0 \end{pmatrix}.
\end{equation}
Let $g_{\mathbb{J}} \coloneqq  \omega( \cdot , \mathbb{J} \cdot )$, so that $g_{\mathbb{J}}$ is the real part of the standard Hermitian structure on $\mathbb{C}^m$, and hence a Riemannian metric on $ T^*\mathbb{R}^m$. Sometimes it will be necessary to include the dimension in our notation, in which case we write $ \omega_m$, $\mathbb{J}_m$ and so on. We use the sign convention that the symplectic gradient/Hamiltonian vector field $X_H$ of a Hamiltonian $H \colon T^*\mathbb{R}^m \to \mathbb{R}$ is given by $\omega(X_H, \cdot ) = -dH$, so that the Poisson bracket of two Hamiltonians on $T^*\mathbb{R}^m$ is given by $\{ F,H\} \coloneqq  \omega(X_F,X_H)$.\\

\textbf{Hamiltonians:} We now introduce the class of Hamiltonians that we work with. A vector field $Y$ on $ T^* \mathbb{R}^m$ is said to be a \textit{Liouville vector field} if $ d( \omega(Y, \cdot )) = \omega$. A Liouville vector field $Y$ is said to be \textit{asymptotically regular} if  $\Vert DY(x)\Vert \le c$ for some positive constant $c$ and all $x\in T^*\R^m$.
\begin{definition}\label{def:TentHam}
Let $\mathcal{H}^*$ denote the set of Hamiltonians $H$ on $T^*\mathbb{R}^m$ such that \textit{either} 
\begin{enumerate}
\item[(c)\namedlabel{c1}{(c)}] $dH$ is compactly supported,
\end{enumerate}
or the following three axioms are satisfied:
\begin{enumerate}
\item[(h1)\namedlabel{h1}{(h1)}] there exists an asymptotically regular Liouville vector field $Z$ and constants $c,c'>0$,
such that $dH(Z)(x) \ge c|x|^2 - c'$, for all $x\in T^*\R^m$;
\item[(h2)\namedlabel{h2}{(h2)}] (sub-quadratic growth) $\sup_{x\in T^*\R^m} \Vert D^3 H(x)\Vert \cdot |x| <\infty$;
\item[(h3)\namedlabel{h3}{(h3)}] in the neighbourhood of $H^{-1}(0)$ exists a coercive function $F$, such that for all $x\in H^{-1}(0)$ either $\{H,F\}(x)\neq 0$ or $\{H,\{H,F\}\}(x)>0,$ as $|x|\to\infty$.
\end{enumerate}
\end{definition}
\begin{remark}
Note that if $H \in \mathcal{H}^*$ and $h \in C^{ \infty }_c(T^*\mathbb{R}^n)$ then also $H+h\in\mathcal{H}^*$. 
\end{remark}
We say that $H^{-1}(0)$ is of \textit{contact type} if there exists an asymptotically regular Liouville vector field $Y$ such that $dH(Y)(x) >0 $ for all $x \in H^{-1}(0)$. Note that this implies that $H^{-1}(0)$ is a smooth hypersurface.
\begin{definition}
\label{defn-tenta}
Let $ \mathcal{H} \subseteq \mathcal{H}^*$ denote the subset of those Hamiltonians $H$ which in addition have the property that $H^{-1}(0)$ is of contact type.
We say that Hamiltonians $H \in \mathcal{H}$ satisfying \ref{h1}, \ref{h2} and \ref{h3} are \textit{strongly tentacular}. If we drop the requirement that the function $F$ in \ref{h3} is coercive then $H$ is called simply \textit{tentacular}. 
\end{definition}

\begin{remark}
The strongly tentacular condition implies that all the periodic orbits of $H$ are contained in a compact set. This may not be the case for tentacular Hamiltonians. Rabinowitz Floer homology is defined for Morse-Bott tentacular Hamiltonians whose orbits are contained in a compact set. Invariance under compact perturbation requires the strongly tentacular condition. This explains why the adjective ``strongly'' appears in Corollary \ref{cor:WC} but not in Theorem \ref{thm:compRFH}.
\end{remark}

The connection between the definition of strongly tentacular Hamiltonians given here and the one presented in the introduction is given by the following result:

\begin{prop}\label{prop:TentEx}
Let $H$ be a Hamiltonian of the form \eqref{defH}. Then $H$ belongs to $ \mathcal{H}$, if the Jordan decomposition of $\mathbb{J}A_1$ has $m_i\times m_i$ blocks corresponding to eigenvalues $\lambda_i$, where each pair $(m_i,\lambda_i)$ satisfies one of the following conditions:
\begin{enumerate}[label=\roman*)]
\item
\label{case1}
$m_i=1$ and $\operatorname{Re}(\lambda_i)\neq 0$;
\item $m_i=2$ and $|\!\operatorname{Re}(\lambda_i)|>\frac{1}{\sqrt{2}}$;
\item 
\label{case3}
$m_i>2$ and $|\!\operatorname{Re}(\lambda_i)|>2$.
\end{enumerate}
\end{prop}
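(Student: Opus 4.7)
The approach is to verify each of the axioms \ref{h1}, \ref{h2}, \ref{h3} and the contact type property in turn. Since $H$ is quadratic, $D^3H\equiv 0$, and \ref{h2} is immediate. The main task is to construct an asymptotically regular Liouville vector field witnessing \ref{h1}; this same field, possibly after a compactly supported modification near the bounded portion of $H^{-1}(0)$, will also witness the contact type property.

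The product structure $H = H_0 + H_1$ invites one to seek $Z = Z_0 + Z_1$. On $T^\ast\R^k$ the radial field $Z_0(x)=\tfrac{1}{2}x$ is Liouville and gives $dH_0(Z_0)(x) = \tfrac{1}{2} x^T A_0 x \ge c_0|x|^2$ by positive definiteness of $A_0$. The problem therefore reduces to exhibiting, on $T^\ast\R^{n-k}$, an asymptotically regular Liouville vector field $Z_1$ satisfying $dH_1(Z_1)(y) \ge c_1|y|^2 - c_1'$. The plan is to use the linear ansatz $Z_1(y)=M_1 y$: the Liouville condition becomes the matrix identity $M_1^T\mathbb{J} + \mathbb{J} M_1 = \mathbb{J}$, while the positivity condition translates into positive definiteness (up to a constant) of the symmetric matrix $A_1 M_1 + M_1^T A_1$.

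Next I would invoke the H\"ormander symplectic normal form for $\mathbb{J}A_1$, decomposing $H_1$ as a symplectic direct sum of model blocks $H_{1,i}$, one for each Jordan pair $(m_i,\lambda_i)$. Since both the Liouville equation and the positivity condition respect direct sums, it suffices to produce $M_i$ block-by-block. For each of the three cases \ref{case1}--\ref{case3}, I would write down the explicit normal form of $H_{1,i}$ together with an explicit ansatz for $M_i$, namely the radial field plus off-diagonal terms involving enough free parameters to optimise against the nilpotent part of the Jordan block, and then compute $A_{1,i}M_i + M_i^T A_{1,i}$ as a symmetric matrix whose positive definiteness is verified via the Sylvester criterion. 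The thresholds $|\!\operatorname{Re}(\lambda_i)|>1/\sqrt{2}$ for $m_i=2$ and $|\!\operatorname{Re}(\lambda_i)|>2$ for $m_i>2$ arise as exactly the values at which these minor determinants become positive.

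Condition \ref{h3} is addressed by taking the coercive function $F(x,y)=\tfrac{1}{2}(|x|^2+|y|^2)$: a direct computation using $X_H = X_{H_0}+X_{H_1}$ shows that on $H^{-1}(0)$ the bracket $\{H,F\}$ vanishes only on the locus where $Z\cdot F=0$, on which $\{H,\{H,F\}\}$ agrees with $dH(Z)$ up to a positive constant and hence is strictly positive by \ref{h1}. Finally, contact type follows because $dH(Z)\ge c|x|^2-c'$ is already strictly positive wherever $|x|^2>c'/c$, and on the bounded remaining portion of $H^{-1}(0)$ one interpolates with a compactly supported modification of $Z$ to produce the desired $Y$. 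The principal obstacle is the block-wise case analysis in the third paragraph: explicitly writing the H\"ormander normal forms for $m_i=2$ and $m_i>2$, constructing the ansatz for $M_i$ with sufficient freedom, and verifying that the stated bounds on $|\!\operatorname{Re}(\lambda_i)|$ are sharp for positive definiteness.
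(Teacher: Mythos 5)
The paper itself states this proposition without proof (it is imported from the earlier work on tentacular Hamiltonians), so there is no in-text argument to measure you against; judged on its own terms, your plan has the right skeleton. Condition (h2) is indeed vacuous for quadratic $H$, and the content of the proposition lives in (h1), where your linear ansatz $Z_1(y)=M_1y$ with the Liouville identity $M_1^T\mathbb{J}+\mathbb{J}M_1=\mathbb{J}$, the reformulation of $dH_1(Z_1)\geq c_1|y|^2-c_1'$ as positive definiteness of $A_1M_1+M_1^TA_1$, and the block-by-block reduction via the H\"ormander normal form is the correct mechanism. The problem is that the proposal stops exactly where the proposition begins: the thresholds $1/\sqrt{2}$ and $2$ \emph{are} the statement. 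Without writing down an explicit $M_i$ for an $m_i\times m_i$ Jordan block and carrying out the Sylvester-criterion computation, there is no evidence that these particular constants (rather than, say, $1$ or something growing with $m_i$) are what the optimisation over the free parameters in $M_i$ produces. You flag this yourself as ``the principal obstacle,'' which is an accurate self-assessment: as written this is a plan for a proof, not a proof.

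Two further concrete issues. First, in the (h3) step, $\{H,F\}=\mathcal{L}_{X_H}F$ and hence $\{H,\{H,F\}\}=\tfrac{d^2}{dt^2}\big|_{t=0}F(\phi^t x)=|\mathbb{J}Ax|^2+\langle x,(\mathbb{J}A)^2x\rangle$ for quadratic $H$ and $F=\tfrac12|x|^2$; this quadratic form is not $dH(Z)$ ``up to a positive constant,'' and its positivity on the locus $\{x\in\Sigma:\langle x,\mathbb{J}Ax\rangle=0\}$ near infinity is a separate assertion that needs the hyperbolicity of $\mathbb{J}A_1$ (e.g.\ via the stable/unstable splitting) rather than a reference back to (h1). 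Second, the contact-type step is both broken and unnecessary: a cutoff combination $\chi Z+(1-\chi)Y$ of two Liouville fields is not Liouville, since $d\iota_{\chi Z+(1-\chi)Y}\omega=\omega+d\chi\wedge\iota_{Z-Y}\omega$; but no interpolation is needed, because the radial field $Y=\tfrac12x\partial_x$ is asymptotically regular and satisfies $dH(Y)=H+1\equiv 1$ on $\Sigma$ (cf.\ Remark \ref{rem:Lioville}), so every hyperboloid of the form \eqref{quadHam} is of contact type outright.
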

\begin{remark}
\label{works-for-any-tent}
Proposition \ref{prop:TentEx} provides a concrete class of examples for which our main results, Theorem \ref{thm:compRFH} and Theorem \ref{thm:iso} are valid. Note that cases \ref{case1} and \ref{case3} correspond to our ``definition'' of strongly tentacular on page \pageref{thm:compRFH}. We emphasise however that Theorems \ref{thm:compRFH} and \ref{thm:iso} are valid for any tentacular Hamiltonian $H$ of the form \eqref{defH} with $A_0$ positive definite and $\mathbb{J}A_1$ hyperbolic\footnote{That is, $\mathbb{J}A_1$ has no purely imaginary eigenvalues.}.
\end{remark}

Below we will outline the construction of Rabinowitz Floer homology groups for Hamiltonians $H \in \mathcal{H}$. In the case $H$ satisfies \ref{c1}, this reduces to the original definition presented by Cieliebak and Frauenfelder \cite{CieliebakFrauenfelder2009}, only specialised to $T^*\mathbb{R}^m$. Meanwhile for strongly tentacular $H$, the construction\footnote{ Actually the construction for strongly tentacular Hamiltonians subsumes the compact contact type hypersurfaces as a special case, see Remark \ref{compact-special-case} below.} comes from \cite{pasquotto2018}.\\

\textbf{Complex structures:} Although $T^*\mathbb{R}^m$ comes equipped with a preferred choice $\mathbb{J}$ of complex structure, in order to achieve transversality for the various moduli spaces used in the definition of Rabinowitz Floer homology, we are forced to work with generic data. To this end we now introduce a suitable parameter space of almost complex structures. 

Let $ \mathcal{J}$ denote the space of all \textit{compatible} almost complex structures $J$ on $T^* \mathbb{R}^m$. Here compatible means that $g_J \coloneqq \omega( \cdot ,J \cdot  )$ is a Riemannian metric on $T^*\mathbb{R}^m$. We view $ \mathcal{J}$ as a pointed space with basepoint $ \mathbb{J}$. An easy linear algebra argument (see for example  \cite[Prop.\ 13.1]{Silva2001}) shows that $ \mathcal{J}$ is contractible. 

Fix an open set $V \subset T^*\mathbb{R}^m\times \mathbb{R}$, and let $ \mathcal{J}(V,\mathbb{J})$ denote the set of smooth maps
\[
(t,\eta) \mapsto J(\cdot,\eta, t) \in \mathcal{J}, \qquad (t, \eta) \in S^1 \times \mathbb{R},
\]
such that 
\[
J( x , \eta, t) = \mathbb{J}, \qquad \text{whenever}\qquad (x, t) \notin V,
\]
and such that
\begin{equation}
\sup_{(t,\eta)\in S^1 \times \R}\|J(\cdot,\eta,t)\|_{C^l}<+\infty, \qquad \forall\ k\in \N.
\label{supJ}
\end{equation}
Here the $C^l$ norm is taken with respect to the standard Riemannian metric $g_{ \mathbb{J}}$ on $T^*\mathbb{R}^m$. Finally we let $\mathcal{J}_{\star}$ denote the union of all spaces $\mathcal{J}(V, \mathbb{J})$ for $V$ of the form $ \operatorname{int} K  \times \mathbb{R} {\setminus} [-a,a]$ for $K \subseteq T^*\mathbb{R}^m$ compact and $a >0$, equipped with the colimit topology. We view $ \mathcal{J}_\star$ as another pointed space, with basepoint $ \mathbb{J}$. This space is again easily seen to be contractible. We write $\mathcal{J}_\star^m$, if we have to indicate the dimension of the underlying space $T^*\R^m$.\\

\textbf{The Rabinowitz action functional:} 
The free period action functional---or \textit{Rabinowitz action functional}---is defined by
\begin{align*}
    \mathcal{A}^{H} & \colon C^{\infty}(S^1, T^*\mathbb{R}^m)\times\mathbb{R}\to\mathbb{R} \\
 (v, \eta) & \mapsto \int_{S^1}v^{*}\lambda-\eta\int_{S^1}H(v(t))\,dt.
\end{align*}
Here $ \lambda$ is any primitive of $ \omega$, for example $\lambda=\frac{1}{2}(pdq-qdp)$. The real number $ \eta$ can be thought of as a Lagrange multiplier version of the area functional from classical mechanics. Thus critical points $ \mathcal{A}^H$ are critical points of the area functional restricted to the space of loops with $H$ mean value zero. Moreover, since $H$ is invariant under its own Hamiltonian flow, the mean value constraint can be upgraded to a pointwise constaint, and we obtain:
\begin{lem}
A pair $(v, \eta)$ is a critical point of $ \mathcal{A}^H$ if and only if\footnote{For $ \eta = 0$ this should be interpreted as: $t \mapsto v(t)$ is constant.} $t \mapsto v(t/\eta)$ is a closed orbit of $X_H$ contained in $H^{-1}(0)$.
\end{lem}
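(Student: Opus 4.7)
The plan is to compute the first variation of $\mathcal{A}^H$ separately in the loop variable $v$ and the Lagrange multiplier $\eta$, and then to use the fact that $H$ is invariant under its own Hamiltonian flow to upgrade the integral constraint to a pointwise one.

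First I would vary $\eta$. Since $\mathcal{A}^H$ depends linearly on $\eta$, the partial derivative is simply
\[
\partial_\eta \mathcal{A}^H(v,\eta) = -\int_{S^1} H(v(t))\, dt,
\]
so the Euler--Lagrange equation in this variable is the mean-zero condition $\int_{S^1} H(v)\, dt = 0$. Next I would vary $v$. For a smooth path $s \mapsto v_s$ of loops with $v_0 = v$ and $\partial_s v_s|_{s=0} = \xi$, Cartan's formula and the identity $d\lambda = \omega$ give
\[
\frac{d}{ds}\bigg|_{s=0} \int_{S^1} v_s^* \lambda = \int_{S^1} v^*\bigl(d\iota_\xi \lambda + \iota_\xi \omega\bigr) = \int_{0}^{1} \omega\bigl(\xi(t),\, \dot v(t)\bigr)\, dt,
\]
since the exact piece integrates to zero on the circle. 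Combined with the sign convention $dH = -\omega(X_H, \cdot)$, one obtains
\[
d\mathcal{A}^H(v,\eta)[\xi,h] = \int_{0}^{1} \omega\bigl(\xi(t),\, \dot v(t) - \eta\, X_H(v(t))\bigr)\, dt - h \int_{0}^{1} H(v(t))\, dt.
\]
Vanishing for all test pairs $(\xi,h)$ is equivalent, by the non-degeneracy of $\omega$ and the fundamental lemma of the calculus of variations, to the system
\[
\dot v(t) = \eta\, X_H(v(t)), \qquad \int_{0}^{1} H(v(t))\, dt = 0.
\]

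Next I would use the first equation to strengthen the second. Since $X_H$ is tangent to level sets of $H$, the function $t\mapsto H(v(t))$ has derivative $dH(v)[\dot v] = \eta\, dH(v)[X_H(v)] = 0$, hence is constant. Combining with the mean-zero condition, we conclude $H(v(t))\equiv 0$, so $v(S^1)\subseteq H^{-1}(0)$.

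Finally I would reparametrise. If $\eta\neq 0$, set $u(\tau) \coloneqq v(\tau/\eta)$; then $\dot u(\tau) = \eta^{-1}\dot v(\tau/\eta) = X_H(u(\tau))$, so $u$ is a closed orbit of $X_H$ of period $\eta$ lying in $H^{-1}(0)$. If $\eta = 0$ then $\dot v \equiv 0$, so $v$ is constant and takes its value in $H^{-1}(0)$, matching the convention stated in the footnote. Conversely, any such reparametrised orbit clearly satisfies both Euler--Lagrange equations, proving the equivalence. The only minor subtlety is the sign check in the variation of the symplectic action term and the handling of the degenerate case $\eta=0$; both are routine but worth stating explicitly.
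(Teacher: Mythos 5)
Your proposal is correct and follows exactly the route the paper indicates (the paper states the lemma without a formal proof, but the preceding sentence sketches precisely your argument: compute the Euler--Lagrange equations, then use the invariance of $H$ under its own Hamiltonian flow to upgrade the mean-value constraint $\int_{S^1}H(v)\,dt=0$ to the pointwise constraint $v(S^1)\subseteq H^{-1}(0)$). Your sign bookkeeping is consistent with the paper's convention $\omega(X_H,\cdot)=-dH$, and the treatment of the $\eta=0$ case matches the footnote.
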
~

\textbf{The Morse-Bott condition:}
It is not just the complex structure that needs to be chosen generically. Floer theory also requires us to work with a Hamiltonian which satisfies a certain generic non-degeneracy condition. Unlike the case of complex structures however, the non-degeneracy condition admits a direct definition. Throughout the rest of this section, we denote by $ \Sigma$ the level set $H^{-1}(0)$ of a given Hamiltonian $H \in \mathcal{H}$ and we denote by $Y$ an asymptotically regular Liouville vector field such that $dH(Y)|_\Sigma > 0$.

\begin{definition} 
$ \ $ 
\begin{enumerate}[label=(\roman*)]
    \item We say that the Rabinowitz action functional $\cA^H$ is \emph{Morse-Bott} if the critical set of $\cA^H$ is a discrete union of finite dimensional manifolds and for every connected component $\Lambda\subseteq \Crit(\cA^H)$ and every $x\in \Lambda$
\begin{equation}\label{eq:MorseBott}
T_x\Lambda=\ker(\nabla^2_x\cA^H).
\end{equation}
\item We say that the closed orbits of the flow $\phi^t$ of $X_H$ on $\Sigma$ are of \emph{Morse-Bott type} if $\eta$ is constant on every connected component $\Lambda \subseteq \Crit({\cA^{H}})$, and the image $\mathcal{N}^\eta$ of a connected component $\Lambda$ with period $\eta$ under the projection $(v,\eta) \mapsto v(0)$ is a closed submanifold of $\Sigma$, such that for all $x\in \mathcal{N}^\eta$ we have
\begin{equation}\label{def:MBT}
T_{p}\mathcal{N}^\eta = \ker (D_{p}\phi^{\eta} - \operatorname{Id}) \cap T_{p} \Sigma.
\end{equation}
\end{enumerate}
\end{definition}

\begin{remark} 
The two Morse-Bott conditions are closely related. If $\cA^{H}$ is a Morse-Bott functional  and $\eta$ is constant on every connected component of $\Crit({\cA^{H}})$, then by \cite[Lem.\ 3.3]{pasquotto2018} all closed orbits of the Hamiltonian flow $\phi^t$ on $\Sigma$ are of Morse-Bott type. Conversely, if $H$ is \textit{defining} for $\Sigma$, i.e.\ if $dH(Y)|_\Sigma\equiv 1$, and if all periodic orbits are of Morse-Bott type then by \cite[Lem.\ 20]{Fauck2016} the corresponding Rabinowitz action functional $\cA^H$ is Morse-Bott.
\end{remark}

The Morse-Bott property of the Rabinowitz action functional is typically achieved by perturbing slightly the Hamiltonian function. However in our case we will calculate the Rabinowitz Floer homology of the specific Hamiltonians satisfying \eqref{defH} by hand (so to speak), and therefore in Section \ref{sec:Morse-Bott} we will check directly that these Hamiltonians fulfill the Morse-Bott property.\\

\textbf{The Rabinowitz Floer equation:}
Fix $ J \in \mathcal{J}_\star$. The positive $L^2$ gradient equation $ \partial_s u = \nabla_J \mathcal{A}^H$ for $u = (v, \eta)$ is the following Floer equation for $v$ coupled with an ordinary differential equation for $\eta$:
\begin{equation}
\label{eq-grad-mas2}
 \left(\begin{array}{c}
\partial_{s}v\\
\partial_s \eta
\end{array}\right)
 = \left( \begin{array}{c}
-J(v,\eta,t)\bigl[\partial_{t}v-\eta X_{H}(v)\bigr] \\
-{\int_0^1 H(v)\,dt \quad}\end{array} \right).
\end{equation}
A solution $u$ of \eqref{eq-grad-mas2} with finite $L^2$ energy $\int_\R \|\partial_s u\|^2ds<\infty$, is called a \emph{Floer trajectory}. Given two distinct components $\Lambda^-$ and $\Lambda^+$ of $\operatorname{Crit} (\mathcal{A}^H)$, we denote by $\mathcal{M}(\Lambda^-,\Lambda^+)$ the set of all solutions of \eqref{eq-grad-mas2} with finite energy and ${\lim_{s\to\pm \infty}u(s)\in \Lambda^\pm}$. We define $\operatorname{ev}^\pm \colon  \mathcal{M}(\Lambda^-,\Lambda^+)\to\Lambda^\pm$ to be the evaluation maps:
\begin{equation}
\operatorname{ev}^-(u)\coloneqq \lim_{s\rightarrow -\infty} u(s,t)\quad \textrm{and}\quad \operatorname{ev}^+(u)\coloneqq \lim_{s\rightarrow +\infty} u(s,t). \label{eqn:ev}
\end{equation}

\begin{definition}
\label{def:regular}
We say that a couple $(H,J)\in \mathcal{H} \times \mathcal{J}_\star$ is \textit{regular} if it satisfies the following two conditions:
\begin{enumerate}[label=\roman*)]
\item The Rabinowitz action functional $ \mathcal{A}^H$ is Morse-Bott, and the closed orbits of $X_H$ on $\Sigma$ are of Morse-Bott type;
\item For every pair of connected components $\Lambda^{-},\Lambda^{+}\subseteq \Crit({\cA^H})$ the associated moduli space $\mathcal{M}(\Lambda^-, \Lambda^+)$ is a smooth finite dimensional manifold without boundary.
\end{enumerate}
\end{definition}
By \cite[Lem.\ 8.7]{pasquotto2018} the set of such regular couples is comeagre in $\mathcal{H} \times \mathcal{J}_\star$. From now on we assume that $(H,J)$ is regular. Next, we introduce flow lines with cascades following \cite{Frauenfelder2004}: 
Consider a Morse function $f \colon \operatorname{Crit} (\mathcal{A}^H ) \to \mathbb{R}$ such that $f$ restricts to a coercive\footnote{When $H$ satisfies \ref{c1} this is condition is automatic.} function on $\Sigma$. Fix a Riemannian metric on $\operatorname{Crit} (\mathcal{A}^H)$ having a Morse-Smale gradient flow $\phi^t$. 
For $z \in \operatorname{Crit} (f)$, we denote by $W^s_f(z)$ and $W^u_f(z)$ the (un)stable manifolds with respect to $\phi^t$. A flow line with $k \geq 1$ cascades between $z^-,z^+\in \Crit(f)$ belonging to distinct connected components is a tuple $ (u_{1},...,u_{k}, t_1, ..., t_{k-1})$,
where each $u_i$ is a non-stationary Floer trajectory (cf.\ \eqref{eq-grad-mas2}), such that
\begin{gather*}
\phi^{t_i}\circ \operatorname{ev}^+(u_i)=\operatorname{ev}^-(u_{i+1}),\qquad i=1, ..., k-1, \\
 \operatorname{ev}^-(u_1)\in W^u_f(z^-), \qquad \operatorname{ev}^+(u_k)\in W^s_f(z^+).
\end{gather*}
We denote the set of all flowlines with $k$ cascades from $z^-$ to $z^+$ as $\mathcal{M}_{\operatorname{cas}}^k(z^-,z^+)$. There is a natural $\R^k$ action on $\mathcal{M}_{\operatorname{cas}}^k(z^-,z^+)$ given by $u_i(\cdot) \mapsto u_i(a+\cdot)$ and we define the space of all flow lines with cascades from $z^-$ to $z^+$ by
\[
\mathcal{M}(z^-,z^+)\coloneqq\bigcup_{k \geq 1}\left(\mathcal{M}_{\operatorname{cas}}^k(z^-,z^+)\mspace{-5mu}\left/ \R^k\right.\right).
\]
By taking such a union, we obtain another smooth manifold without boundary, which moreover is compact ``up to breaking''. 

If $z^-$ and $z^+$ are critical points of $f$ belonging to the same component of $ \operatorname{Crit} (\mathcal{A}^H)$ we set $\mathcal{M}(z^-,z^+)$ to be the quotient $W^u(z^-) {\cap} W^s(z^+)\big/\mathbb{R}$ arising from the natural $ \mathbb{R}$ action by translation.\\

\textbf{Grading:}
We use the $ \mathbb{Z}$-grading convention of \cite{CieliebakFrauenfelderOancea2010}, which differs from the $ \mathbb{Z}+ \frac{1}{2}$ grading convention of \cite{CieliebakFrauenfelder2009} by a factor of 1/2. Explicitly, for $z \in \operatorname{Crit} (f)$ we define
\begin{equation}\label{muRF}
\mu(z) \coloneqq \tCZ(z) + \mu_\sigma(z)  +\frac{1}{2},
\end{equation}
where $\tCZ$ is the transverse Conley-Zehnder index and $\mu_\sigma$ is the signature index defined by
\begin{equation}\label{musig}
\mu_\sigma(x)=\frac{1}{2}\big(\dim W^s_f(x)-\dim W^u_f(x) \big).
\end{equation}
For a regular pair $(H,J)$, one has 
\[
\dim \mathcal{M}(z^-,z^+) = \mu(z^+)- \mu(z^-) - 1.
\]
The compactness up to breaking property alluded to above tells us that when $\mu(z^+)- \mu(z^-) = 1$ the space $ \mathcal{M}(z^-,z^+)$ is compact, and hence a finite set. We denote by $n(z^-,z^+)$ its parity.\\

\textbf{The Rabinowitz Floer complex:}
We define the $\Z_2$-vector space $CF(H,f)$ as the set of formal sums  of the form $ \sum_{z \in S} z$, where $S \subset \operatorname{Crit}(f)$ is a (possibly infinite) set satisfying the \textit{Novikov finiteness condition}
\begin{equation}
\#\;\big\{ z\in S\,\big|\,\mathcal{A}^H(z)>a\big\} < \infty\qquad \forall \, a\in\mathbb{R}.
\label{novikov}
\end{equation}
We denote by $CF_k(H,f) \subset CF(H,f)$ those sums $\sum_Sz$ with $ \mu(z) = k$ for all $z \in S$. We turn $CF_*(H,f)$ into a chain complex by defining 
\[
\partial z^+ \coloneqq \sum n(z^-,z^+) z^-, 
\]
where the sum is taken over all critical points $z^-$ with $ \mu(z^+) =\mu(z^-)+1$, and then extending by linearity. Compactness up to breaking implies that $ \partial^2 = 0$, and a continuation argument tells us that the resulting \textit{Rabinowitz Floer homology} $RFH(H)$ is independent of the auxiliary data used to define it. We refer the reader to \cite{CieliebakFrauenfelder2009} (when $H$ satisfies \ref{c1}) and \cite{pasquotto2018} (when $H$ is strongly tentacular) for details. 
\begin{remark}
\label{rem:onlyonsigma}
If $H \in \mathcal{H}$ satisfies condition \ref{c1} then the Rabinowitz Floer homology groups only depend on $H$ through its zero level set $\Sigma$, and thus we could write $RFH_*(\Sigma)$ instead (although we won't). For strongly tentacular $H$ this need not be the case. However for $H$ of the form \eqref{defH} the main result of this article, Theorem \ref{thm:compRFH}, shows that the Rabinowitz homology groups only depend on the ``compact part'' of the zero level set. 
\end{remark}
\begin{remark} \label{compact-special-case}
In fact, when $ \Sigma$ is compact, there is considerable freedom in the choice of the Hamiltonian $H$ realising $\Sigma$ as its zero level set. In this section for historical reasons we have concentrated on the case where $dH$ is compactly supported, but one could also use a quadratic Hamiltonian \cite{AbbondandoloSchwarz2010}. In particular, for the hypersurface $\Sigma_0$ from \eqref{Sigma-Sigma0} we can use the Hamiltonian $H_0$ from \eqref{defH} to compute its Rabinowitz Floer homology. This fact will be used in the proof of Theorem \ref{thm:iso}. This also shows that the compact case is subsumed by the  strongly tentacular case, i.e.\ if $H \in \mathcal{H}$ satisfies condition \ref{c1} then there exists a strongly  tentacular $H' \in \mathcal{H}$ such that $H^{-1}(0) = (H')^{-1}(0)$.  
\end{remark}

\textbf{Positive Rabinowitz Floer homology:} The action functional $\cA^H$ provides an $\R-$filtration on $CF\left(\cA^H,f\right)$ as follows: For $t\in \R$ denote the complex generated by critical points of action $\leq t$ by
\begin{equation}
CF^{\leq t}(H,f)\coloneqq \left\lbrace {\textstyle \sum_{z\in S}z}\in CF(H,f) \left|\ \sup_{z\in S}\mathcal{A}^H(z) \leq t\right.\right\rbrace.
\label{CFt}
\end{equation}
The boundary operator does not increase the action, i.e.\
\begin{equation}\label{filtration}
\partial\left( CF^{\leq t}_{*+1}(H,f)\right)\subseteq CF^{\leq t}_*(H,f).
\end{equation}
The positive Rabinowitz Floer homology $RFH^+(H)$ is the homology of the following quotient complex generated by the critical points with positive action: 
\begin{equation}\label{CF+}
\begin{aligned}CF^+(H,f) & \coloneqq CF(H,f)\Big/CF^{\leq 0}(H,f)\\
& \, =\left\lbrace x\in \Crit(f)\ \big|\ \cA^H(x)>0\right\rbrace\otimes \Z_2.\end{aligned}
\end{equation}
The associated boundary operator $\partial^{\scriptscriptstyle +}$ is induced by $\partial$ on the quotient, which is well-defined as $\partial$ reduces the action (cf.\ (\ref{filtration})). Geometrically, $\partial^+$ is defined by counting flow lines with cascades where both endpoints have strictly positive action. Analogously, one defines $RFH^-(H)$ as the homology of $CF^{<0}(H,f)$.

More generally, $CF^+(H,f)$ fits into the following short exact sequence of complexes induced by action filtration:
\begin{equation}\label{ActionShortExSeq}
0\rightarrow \equalto{CF^0(H,f)}{CF^{\leq 0}/CF^{<0}}\rightarrow \equalto{CF^{\geq 0}(H,f)}{CF/CF^{<0}}\rightarrow \equalto{CF^+(H,f)}{CF/CF^{\leq 0}}\rightarrow 0,
\end{equation}
where the boundary operator of each complex is induced by $\partial$ and well-defined due to (\ref{filtration}). We hence obtain the following long exact sequence in homology
\begin{equation}\label{ActionLongExSeq}
\dots \rightarrow RFH^0_\ast(H)\rightarrow RFH^{\geq 0}_\ast(H)\rightarrow RFH^+_\ast(H)\rightarrow RFH^0_{\ast-1}(H)\rightarrow \dots
\end{equation}
where $RFH^0_\ast(H)\cong H_{\ast+n-1}(\Sigma)$, as $\partial$ counts in the zero-action window only flow lines with no cascades, i.e.\ Morse flow lines on $\Sigma$.

 The positive Rabinowitz Floer homology is independent of the auxiliary choices used to define it and invariant under compact perturbations. When $H$ satisfies condition \ref{c1}, these facts follow from \cite[Cor.\ 3.8]{CieliebakFrauenfelder2009}. The proof for strongly tentacular $H$ goes along similar lines, but the argument is slightly more involved, and has not yet appeared in the literature. Therefore in the next section we supply the full details. Similar statements apply to the other variants $RFH^{\ge 0}$, $RFH^-$, etc.
\section{Invariance of the positive Rabinowitz Floer homology}
The goal of this section is to show the following invariance of $RFH^+(H)$:
\begin{theorem} \label{thm:posRFHtent}
For a strongly tentacular Hamiltonian $H$ the positive Rabinowitz Floer homology $RFH^+(H)$ does not depend on the almost complex structure $J$ or on the Morse-Smale pair $(f,g)$ on $\operatorname{Crit} (\mathcal{A}^H )$. Moreover if $\{H_s\}$ is a $1$-parameter family of Hamiltonians in the affine space of compactly supported perturbations of $H$, then $RFH^+(H_s)$ is constant along $\{H_s\}$.
\end{theorem}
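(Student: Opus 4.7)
The proof follows the standard Floer-theoretic continuation pattern, with the additional requirement that the continuation chain maps respect the action filtration $CF^{\leq 0} \subseteq CF$ and hence descend to the quotient complex $CF^+ = CF/CF^{\leq 0}$. We treat the three sources of auxiliary data separately. For the almost complex structure $J$ and the Morse--Smale pair $(f,g)$ on $\Crit(\mathcal{A}^H)$, fix $H$ and pick a homotopy $(J_s, f_s, g_s)$ interpolating between two regular choices and constant outside $s \in [0,1]$. The $s$-dependent Rabinowitz Floer equation together with an $s$-dependent cascade condition yields moduli spaces whose rigid elements define a continuation chain map $\Phi$ between the two Rabinowitz Floer complexes. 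Compactness of these moduli spaces is inherited from \cite[Sec.~5]{pasquotto2018}, as the $s$-dependent data ranges over a compact family and the bounds proved there depend only on its $C^2$-norm. Because $\mathcal{A}^H$ itself does not depend on $s$, the energy identity reduces to
\begin{equation*}
\mathcal{A}^H(z^+) - \mathcal{A}^H(z^-) = \int_\R \|\partial_s u\|_{J_s}^2 \, ds \geq 0,
\end{equation*}
so the action is monotonic along continuation trajectories. Thus $\Phi$ respects the action filtration and descends to a chain map $\Phi^+$ on quotient complexes, and a standard homotopy-of-homotopies argument produces a chain-homotopy inverse on the quotient.

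\textbf{Invariance under compact perturbations of $H$.} Let $\{H_s\}_{s\in [0,1]}$ be a smooth path with each $H_s-H_0$ compactly supported. The action functional now depends genuinely on $s$, and the energy identity becomes
\begin{equation*}
\mathcal{A}^{H_1}(z^+) - \mathcal{A}^{H_0}(z^-) = \int_\R \|\partial_s u\|^2\, ds - \int_\R \eta(s)\int_0^1 (\partial_s H_s)(v(s,t))\,dt\,ds.
\end{equation*}
Because the support of $\partial_s H_s$ lies in a fixed compact set and because the axioms \ref{h1}--\ref{h3} can be witnessed by a common Liouville field $Z$ and coercive function $F$ along the entire path, the $C^0$- and $\eta$-bounds of \cite[Sec.~5]{pasquotto2018} extend to the $s$-dependent setting; the action shift along any continuation trajectory is therefore bounded by $C\cdot \sup_s\|\partial_s H_s\|_{C^0}$ for a constant $C$ depending only on $\{H_s\}$. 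The non-zero spectrum of $\mathcal{A}^{H_s}$ on critical points is discrete (by the Morse--Bott property) and bounded away from zero uniformly in $s$ by a positive constant $c_0$, since the closed-orbit set is uniformly compact along the path. Consequently, for any sufficiently short sub-path the action shift is less than $c_0$, and the continuation map sends $CF^{\leq 0}(H_0)$ into $CF^{\leq 0}(H_1)$, descending to the quotient. For a general path one splits $[0,1]$ into finitely many short sub-intervals and composes the resulting maps on $CF^+$; a parametric chain-homotopy argument, analogous to the invariance proof for $RFH(H)$ in \cite{pasquotto2018}, shows the composition is a chain-homotopy equivalence.

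\textbf{Main obstacle.} The essential difficulty is establishing uniform $C^0$- and $\eta$-bounds on continuation trajectories in the parametric, $s$-dependent setting underlying the compact-perturbation argument. The bounds in \cite[Sec.~5,~8]{pasquotto2018} are proved for a fixed strongly tentacular Hamiltonian, and extending them requires that the Liouville-field maximum principle and cutoff arguments apply uniformly as $H$ varies along the path. The crucial observation is that the tentacular data $(Z, F)$ witnessing \ref{h1} and \ref{h3} may be chosen independently of $s$ along the entire path $\{H_s\}$, so the relevant differential inequalities hold uniformly; once this parametric compactness is in place, the remaining arguments are routine adaptations of the unfiltered invariance proof.
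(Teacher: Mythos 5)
Your overall architecture coincides with the paper's: a uniform positive lower bound (action gap) on the action of positive critical points, valid over all small compactly supported perturbations; quantitative control of the action change along continuation trajectories for short homotopies; preservation of the filtration $CF^{\leq 0}\subseteq CF$; and then composition over sub-intervals together with a homotopy-of-homotopies argument. Your treatment of the $J$- and $(f,g)$-dependence via monotonicity of the action is correct. However, there is a genuine gap in the quantitative step for compact perturbations of $H$. You claim that the action shift along any continuation trajectory is bounded by $C\cdot\sup_s\|\partial_s H_s\|_{C^0}$ with $C$ depending only on the path, and then make this smaller than the gap $c_0$ by shortening the path. This additive bound is not available: the perturbation term in your energy identity is controlled by $\|\eta\|_{L^\infty(\R)}\cdot\|\partial_s H_s\|_{L^\infty}$, and the only available bound on $\|\eta\|_{L^\infty(\R)}$ (\cite[Prop.\ 3.3]{pasquotto2017}) is proportional to $\max\{|\cA(z^-)|,|\cA(z^+)|\}+1$. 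Since the action spectrum is in general unbounded above (e.g.\ for the hyperboloids of Theorem \ref{thm:compRFH}), no choice of sub-path length makes the shift uniformly smaller than $c_0$ over all positive critical points.

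The correct statement, which is the content of the paper's Lemma \ref{lem:delta2}, is multiplicative rather than additive: for $\|\partial_s H_s\|_{L^\infty}$ small (with smallness measured against $\delta$, $\tilde c$ and $\varepsilon_0$), one obtains $b\geq a\bigl(1-\tfrac{8}{7}\tilde c(1+\tfrac{1}{\delta})\|\partial_s H_s\|_{L^\infty}\bigr)\geq \tfrac{1}{2}a>0$ whenever $|b|\leq a$, and a separate argument is needed to exclude the case $b<-a$ (separate precisely because the $\eta$-bound involves $\max\{|a|,|b|\}$, so a very negative $b$ cannot be ruled out by the same estimate); you do not address this second case at all. Once $b>0$ is known, the uniform gap of Lemma \ref{lem:delta1} upgrades it to $b>\delta$. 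Finally, that gap itself is not merely a consequence of ``uniform compactness of the closed-orbit set'': the paper's proof of Lemma \ref{lem:delta1} needs the contact-type condition together with the pointwise estimate $|\eta|\sup\|DX_H\|\geq 2\pi$ of Remark \ref{rem:C2small} to rule out positive-action orbits degenerating to constants. With these points repaired, your proof becomes essentially the paper's.
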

As seen above, the positive Rabinowitz Floer homology $RFH^+(H)$ is generated by non-constant periodic orbits and thus vanishes in the absence of closed characteristics. 

Throughout this section we will denote by $(M, \omega)$ any exact symplectic manifold. In order to prove Theorem \ref{thm:posRFHtent}, we need compactness results for homotopies of Hamiltonians and almost complex structures, which are stronger than the ones proved in \cite{pasquotto2017}. To obtain these results, we recall the notion of \emph{uniform continuity of \emph{(PO)}}, as introduced in \cite{pasquotto2018}:
\begin{definition}
We say that $H$ satisfies property (PO) if 
for any fixed action window, all non-degenerate periodic orbits are contained in a compact subset of $M$.
Moreover, we say that property (PO) is \emph{uniformly continuous} at $H$ if there exists an open neighbourhood $\mathcal{O}(H)$ of $0$ in $C^{\infty}_{c}(M)$ and an exhaustion of $M$ by compact sets $\{K_{n}\}_{n\in\mathbb{N}}$, such that for every $n\in \mathbb{N}$ and every $h\in \mathcal{O}(K_{n})=\mathcal{O}(H)\cap C^{\infty}_{0}(K_{n})$, whenever
\[
(v,\eta)\in \Crit(\cA^{H+h})\quad\textrm{and}\quad 0<\Big|\cA^{H+h}(v,\eta)\Big|\leq n,
\]
then $v(S^{1})\subseteq K_{n} \cap (H{+}h)^{-1}(0)$.
\end{definition}
\begin{remark}
By \cite[Lem.\ 8.4]{pasquotto2018} every strongly tentacular Hamiltonian satisfies the axiom of uniform continuity of (PO). 
\end{remark}
Define: $\displaystyle\qquad \Crit^+(\cA^H) \coloneqq \Crit(\cA^H )\cap \left(\cA^H\right)^{-1}((0,+\infty)).$
\begin{remark}\label{rem:C2small}
Let $H \colon T^*\R^n\to \R$ be a Hamiltonian on $(T^*\R^n,\omega_n)$. If $(v,\eta)\in \Crit^+(\cA^H)$, then the following adaptation of \cite[Lem.\ 2.2]{laudenbach2004} to the Rabinowitz Floer framework holds:
\[
|\eta| \sup_{v(S^1)}\|DX_H\|\geq 2\pi.
\]
\end{remark}
\begin{lem}\label{lem:delta1}
Let $H$ be a Hamiltonian on an exact symplectic manifold $(M,\omega)$, such that $\Sigma \coloneqq H^{-1}(0)$ is of contact type.
If property (PO) is uniformly continuous at $H$ then there exists an open neighbourhood $\mathcal{O}(H)$ of $0$ in $C^\infty_c(M)$, such that for every compact set $K\subseteq M$ there exists a constant $\delta>0$, such that for all $h\in \mathcal{O}(H)\cap C_c^\infty(K)$
\[
\inf \left\lbrace \cA^{H+h}(v,\eta)\ \Big| \ (v,\eta)\in \Crit^+(\cA^{H+h})\right\rbrace>\delta.
\]
\end{lem}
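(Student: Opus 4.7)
The plan is to combine the Liouville primitive formula for the action at critical points with the $C^2$-smallness period bound from Remark \ref{rem:C2small}. Let $Y$ be the asymptotically regular Liouville vector field realizing $\Sigma$ as a contact-type hypersurface, and write $\lambda = \iota_Y\omega$. Since $\iota_{X_F}\omega = -dF$, we have $\lambda(X_F) = \omega(Y,X_F) = dF(Y)$ for any Hamiltonian $F$. For any $(v,\eta)\in\Crit(\cA^{H+h})$, using $\partial_t v = \eta X_{H+h}(v)$ and $v(S^1)\subseteq (H+h)^{-1}(0)$, this yields
\[
\cA^{H+h}(v,\eta) \;=\; \int_{S^1}v^{*}\lambda \;=\; \eta\int_0^1 d(H+h)(Y)(v(t))\,dt.
\]

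I would shrink the neighborhood provided by uniform continuity of (PO) to an open $\mathcal{O}(H)\subseteq C^\infty_c(M)$ with the property that, for every $n$, each $h\in\mathcal{O}(H)$ with $\mathrm{supp}(h)\subseteq K_n$ satisfies $\|h\|_{C^2(K_n)} < \gamma_n$ for prescribed thresholds $\gamma_n > 0$. Each seminorm $\|\cdot\|_{C^2(K_n)}$ is continuous on $C^\infty_c(M)$ for the LF-topology, so such conditions define open sets; the compatibility needed to make them collectively define an open neighborhood of $0$ can be arranged by cutting out with annular seminorms $\|\cdot\|_{C^2(K_n\setminus K_{n-1})}$.

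Given a compact $K$, fix $N$ with $K\subseteq K_N$. For $h\in\mathcal{O}(H)\cap C^\infty_c(K)$ and $(v,\eta)\in\Crit^+(\cA^{H+h})$, either $\cA^{H+h}(v,\eta)>N$, or uniform continuity of (PO) forces $v(S^1)\subseteq K_N$. In the latter case, compactness of $\Sigma\cap K_N$ together with contact type of $\Sigma$ yields $c_N>0$ and an open neighborhood $U_N$ of $\Sigma\cap K_N$ in $K_N$ on which $dH(Y)\ge c_N$. Choosing $\gamma_N$ small enough in terms of $c_N$, $\sup_{K_N}\|Y\|$ and $\sup_{K_N}\|DX_H\|$ guarantees simultaneously $(H+h)^{-1}(0)\cap K_N\subseteq U_N$, $|dh(Y)|\le c_N/2$ on $K_N$ (so $d(H+h)(Y)\ge c_N/2$ along any orbit in $K_N$), and $M_N := \sup_{K_N}\|DX_{H+h}\|$ bounded uniformly in $h$. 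Combined with the action formula and Remark \ref{rem:C2small}, which yields $\eta\ge 2\pi/M_N$ (with $\eta>0$ forced by positivity of the action), one obtains
\[
\cA^{H+h}(v,\eta)\;\ge\;\eta\cdot\frac{c_N}{2}\;\ge\;\frac{\pi c_N}{M_N},
\]
whence the lemma holds with $\delta := \tfrac{1}{2}\min\{N,\,\pi c_N/M_N\}>0$.

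The main obstacle is the construction in the second paragraph: the thresholds $\gamma_n$ imposed by the geometry (roughly $\gamma_n \lesssim c_n/\sup_{K_n}\|Y\|$) may shrink to $0$ as $n\to\infty$, so a single uniform bound $\|h\|_{C^2(M)}<\epsilon_0$ is not enough to treat every compact $K$. The LF-structure of $C^\infty_c(M)$ provides exactly the flexibility needed, permitting genuinely $n$-dependent $C^2$-bounds specified annularly on $K_n\setminus K_{n-1}$; checking that the resulting set is open and contains $0$ is the technical bookkeeping at the heart of the lemma, after which the geometric estimate above is routine.
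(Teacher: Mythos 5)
Your proposal uses exactly the same ingredients as the paper's proof --- the action identity $\cA^{H+h}(v,\eta)=\eta\int_{S^1} d(H{+}h)(Y)$ coming from the contact-type Liouville field, uniform continuity of (PO) to confine low-action orbits to a compact set, a uniform lower bound on $d(H{+}h)(Y)$ near $\Sigma$ after shrinking $\mathcal{O}(H)$, and the Laudenbach-type period bound of Remark \ref{rem:C2small} --- but packages them as a direct quantitative estimate rather than the paper's argument by contradiction with a sequence $h_n\to 0$, $\cA^{H+h_n}(v_n,\eta_n)\to 0$. Your attention to the quantifier order (fixing $\mathcal{O}(H)$ first via annular $C^2$-seminorm conditions on $K_n\setminus K_{n-1}$, before $K$ and $\delta$ are chosen) is legitimate and, if anything, more scrupulous than the paper, which shrinks $\mathcal{O}(H)$ only after fixing $K=K_n$.

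The one step that does not go through as written is the appeal to Remark \ref{rem:C2small} on a general exact symplectic manifold $M$: that remark is stated for Hamiltonians on $T^*\R^n$, where $\|DX_H\|$ and the inequality $|\eta|\sup\|DX_H\|\ge 2\pi$ are meaningful globally; on a general $M$ this is a chart-dependent statement. The paper's contradiction setup is designed precisely to bridge this: after Arzel\`a--Ascoli the orbits $(v_n,\eta_n)$ converge to a point $(x,0)$, so for large $n$ each $v_n(S^1)$ lies in a single coordinate chart $\varphi\colon\mathcal{V}\to\R^{2m}$, and the period bound is applied to the pushed-forward vector field $D\varphi^{-1}(X_{H+h_n})$ there. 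In your direct version you must supply this localization yourself, for instance by observing that $\|\partial_t v\|\le|\eta|\sup_{K_N}\|X_{H+h}\|$, so that if $\eta$ were below a threshold determined by a Lebesgue number for a finite atlas of a neighbourhood of $\Sigma\cap K_N$ the orbit would lie in one chart, where the period bound (with chart-distortion constants absorbed into $M_N$) forces $\eta$ back above a definite constant. This is a routine repair, but without it the inequality $\eta\ge 2\pi/M_N$ is not justified as stated.
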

\begin{proof}
We argue by contradiction. Let $\widetilde{\mathcal{O}}(H)$ be the open neighbourhood from property (PO). Without loss of generality, we can assume that $K=K_n$ for some $K_n$ in the exhaustion of $M$. Suppose that there exists a sequence $h_n\in \widetilde{\mathcal{O}}(H)\cap C_c^\infty(K)$ and a sequence $(v_n,\eta_n)\in \Crit^+(\cA^{H+h_n})$, such that $\lim_{n\to\infty}h_n= 0$ and
\begin{equation}\label{A->0}
\lim_{n\to\infty}\cA^{H+h_n}(v_n,\eta_n)= 0. 
\end{equation}
Property (PO) is uniformly continuous at $H$, hence all the periodic orbits $v_n$ are contained in the compact set $K$. By requiring certain bounds on the derivatives for $h\in\widetilde{\mathcal{O}}(H)$, we can find an open subset $\mathcal{O}(H)\subset\widetilde{\mathcal{O}}(H)$ where the following uniform bounds are satisfied:
\begin{equation}\label{DX_(H+h)}
\sup_{h\in \mathcal{O}(H)\cap C_c^\infty(K)}\sup_{K}\|DX_{H+h}\|<+\infty.
\end{equation}
As $\Sigma$ is of contact type there exists a Liouville vector field $Y$, such that $dH(Y)>0$ on $\Sigma$. In particular, we can assume the existence of an open neighbourhood $\mathcal{U}$ of $\Sigma$ and of a constant $\delta'>0$, such that 
\[
\inf_{U\cap K} dH(Y)>\delta'>0.
\]
By possibly shrinking $\mathcal{O}(H)$, we can assume that for all $h\in \mathcal{O}(H)\cap C_c^\infty(K)$ we have $(H{+}h)^{-1}(0)\cap K \,\subseteq\, U\cap K$ and 
\[
\inf_{U\cap K} d(H{+}h)(Y)>\frac{\delta'}{2}.
\]
Thus we obtain $\displaystyle \qquad\cA^{H+h_n}(v_n,\eta_n)=\eta_n\int_{S^1} d_{v_n}(H{+}h_n)(Y)>\frac{\eta_n \delta'}{2}$.\medskip\\
With (\ref{A->0}), we conclude that $\lim_{n\to \infty}\eta_n=0$. On the other hand, as $(v_n,\eta_n)\in \Crit^+(\cA^{H+h_n})$, we get uniform bounds on the derivative of $v_n$
\[
v_n(t)\in K, \qquad \|\partial_tv_n(t)\|\leq |\eta_n|\sup_{K}\|X_{H+h_n}\|\quad \forall\ n\in \N,\ t\in S^1.
\]
This allows us to use the Arzel{\'a}-Ascoli theorem, which yields a convergent subsequence (which we denote the same)
\[
\lim_{n\to\infty}(v_n,\eta_n)=(x,0)\in \Sigma{\times}\{0\}.
\]
Let $x\in\mathcal{V} \subseteq M,\; \varphi \,\colon \mathcal{V}\to\R^{2m}$ be a coordinate chart around $x$. For $n$ big enough we can assume $v_n(S^1)\subseteq \mathcal{V}$. Then $x_n \coloneqq \varphi \circ v_n$ satisfies the respective Hamiltonian equation $\partial_tx_n=\eta_n D\varphi^{-1}(X_{H+h_n})(x_n)$. Hence $\partial_{tt}x_n=\eta_n D\big(D\varphi^{-1}(X_{H+h_n})\big)(\partial_tx_n)$. However, from Remark \ref{rem:C2small} we can infer that
\[
\eta_n \big\|D\left(D\varphi^{-1}(X_{H+h_n})\right)\big\|\geq 2\pi,
\]
which together with \eqref{DX_(H+h)} contradicts $\lim_{n\to \infty}\eta_n= 0$.
\end{proof}
Fix a strongly tentacular Hamiltonian $H \colon T^*\mathbb{R}^n\to \mathbb{R}$ and let $\mathcal{O}(H)$ be an open neighbourhood of $0$ in $C_c^\infty(T^*\R^n)$, such that all Hamiltonians from $H+\mathcal{O}(H)$ are strongly tentacular and $\mathcal{O}(H)$ is as in Lemma \ref{lem:delta1}. Fix sets $K\subseteq \mathcal{V}\subseteq T^*\R^n$, such that $K\neq \emptyset$ is compact and $\mathcal{V}$ is open and precompact. Let $\delta>0$ be a constant as in Lemma \ref{lem:delta1}; let $\y\in (0,\delta)$ and let $0<\varepsilon_{0},\, \tilde{c}<\infty$ be constants as in \cite[Lem.\ 2.1]{pasquotto2017} depending only on $H$.

Finally, fix $h_\pm\in \mathcal{O}(H)\cap C^\infty_c(K)$ and let 
$$
\mathbb{R}\ni s\longmapsto (H_s,J_s)\in \big(H+\mathcal{O}(H)\big)\times \mathcal{J}(\mathcal{V}{\times}(\R \setminus [-\y,\y]), \mathbb{J}),
$$
 be a smooth homotopy of Hamiltonians and $\omega$-compatible almost complex structures, constant outside of $[0,1]$, such that $H_s=H+h_-$ for $s\leq 0$ and $H_s=H+h_+$ for $s\geq 1$. In this setting we formulate the following lemma:

\begin{lem}\label{lem:delta2}
Let $(v^\pm,\eta^\pm)$ be a pair of critical points of $\cA^{H+h_\pm}$.
If the homotopy $\{H_s,J_s\}_{s\in\R}$ satisfies
\begin{equation}
\|\partial_{s}H_{s}\|_{L^{\infty}}<\min \left\lbrace \frac{\varepsilon_{0}}{8\big(\tilde{c}\varepsilon_0+\|J\|_{L^{\infty}}^{3/2}\big)},\frac{7\delta}{16\tilde{c}(1+\delta)}\right\rbrace,
\label{Hs}
\end{equation}
and if $u$ is a solution to the equation $\partial_s u=\nabla_{J_s}\cA^{H_s}(u)$ with $\displaystyle\lim_{s\to\pm\infty}u(s)=(v^\pm,\eta^\pm)$, then $\cA^{H+h_-}(v^-,\eta^-)>\delta$ implies
$\cA^{H+h_+}(v^+,\eta^+)>\delta$.
\end{lem}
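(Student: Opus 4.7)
The strategy is to combine the standard energy identity for an $s$-dependent gradient flow with the Lagrange multiplier estimates supplied by \cite[Lem.\ 2.1]{pasquotto2017}, and argue by contradiction. A direct computation using $\partial_s u = \nabla_{J_s}\cA^{H_s}(u)$ yields
\begin{equation*}
\frac{d}{ds}\cA^{H_s}(u(s)) = \|\nabla_{J_s}\cA^{H_s}(u(s))\|_{J_s}^2 - \eta(s)\int_0^1 (\partial_s H_s)(v(s,t))\,dt.
\end{equation*}
Since $\partial_s H_s\equiv 0$ for $s\notin[0,1]$, the action $s\mapsto\cA^{H_s}(u(s))$ is non-decreasing on $(-\infty,0]$ and on $[1,+\infty)$, so only what happens on $[0,1]$ needs to be controlled.

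Suppose for contradiction that $\cA^{H+h_+}(v^+,\eta^+)\le \delta$. Monotonicity on the two exterior intervals gives $\cA^{H_0}(u(0))\ge \cA^{H+h_-}(v^-,\eta^-)>\delta$ and $\cA^{H_1}(u(1))\le \cA^{H+h_+}(v^+,\eta^+)\le\delta$. By continuity there is a first time $s_*\in(0,1]$ with $\cA^{H_{s_*}}(u(s_*))=\delta$, and $\cA^{H_s}(u(s))\ge\delta$ for $s\in[0,s_*]$. Integrating the identity above on $[0,s_*]$ and rearranging, one gets
\begin{equation*}
\int_0^{s_*}\|\nabla_{J_s}\cA^{H_s}\|_{J_s}^2\,ds\,\le\, \delta-\cA^{H_0}(u(0))+\|\partial_s H\|_{L^\infty}\int_0^{s_*}|\eta(s)|\,ds\,\le\,\|\partial_s H\|_{L^\infty}\int_0^{s_*}|\eta(s)|\,ds.
\end{equation*}
The remaining task is to show that the hypothesis on $\|\partial_s H\|_{L^\infty}$ is incompatible with this inequality.

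To bound $\int_0^{s_*}|\eta(s)|\,ds$, I would split $[0,s_*]$ into $S=\{s:\|\nabla_{J_s}\cA^{H_s}(u(s))\|_{J_s}<\varepsilon_0\}$ and its complement $L$. On $L$, Chebyshev gives $|L|\le\varepsilon_0^{-2}\int_L\|\nabla\|^2\,ds$, and an estimate of the form $|\eta|\lesssim \|J\|_{L^\infty}^{3/2}\|\nabla\cA^{H_s}\|_{J_s}+\tilde c(1+|\cA^{H_s}(u)|)$ (expected from \cite[Lem.\ 2.1]{pasquotto2017}) converts this into a bound proportional to $\int_L\|\nabla\|^2\,ds$ with prefactor $(\tilde c\varepsilon_0+\|J\|_{L^\infty}^{3/2})$. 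On $S$, the same lemma yields $|\eta(s)|\le\tilde c(1+|\cA^{H_s}(u(s))|)$; combining this with an a priori upper bound $\cA^{H_s}(u(s))\le\delta+O(\|\partial_s H\|_{L^\infty}\int|\eta|)$ obtained by integrating the energy identity \emph{backwards} from $s_*$, and solving the resulting self-referential inequality via a short bootstrap, one gets $\int_S|\eta|\,ds\le\tilde c(1+\delta)$ up to a controllable error.

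Substituting these two bounds into the displayed inequality and using respectively the two halves of the assumed smallness of $\|\partial_s H\|_{L^\infty}$, namely
$\|\partial_s H\|_{L^\infty}<\varepsilon_0/[8(\tilde c\varepsilon_0+\|J\|_{L^\infty}^{3/2})]$ (to absorb the $L$-contribution into at most half of $\int\|\nabla\|^2$) and $\|\partial_s H\|_{L^\infty}<7\delta/[16\tilde c(1+\delta)]$ (to force the $S$-contribution to be strictly smaller than $\cA^{H_0}(u(0))-\delta$), produces a strict inequality that contradicts the displayed bound. The main obstacle is the third step: obtaining a uniform a priori upper bound on $\cA^{H_s}(u(s))$ over $[0,s_*]$, since only a lower bound $\ge\delta$ comes for free, and the upper bound must be extracted from the $|\eta|$-estimate itself via a Gronwall/bootstrap argument. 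Getting the numerical constants $1/8$ and $7/16$ to work out requires apportioning the $\int\|\nabla\|^2$ budget carefully between the $S$- and $L$-parts.
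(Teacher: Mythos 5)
Your plan has a genuine gap at the final step, and it is not a matter of bookkeeping constants. You aim to contradict the hypothesis $b\coloneqq\cA^{H+h_+}(v^+,\eta^+)\le\delta$ directly from the energy identity, by showing that the perturbation term $\|\partial_sH_s\|_{L^\infty}\int|\eta|\,ds$ is strictly smaller than $\cA^{H_0}(u(0))-\delta\ge a-\delta$. But $a=\cA^{H+h_-}(v^-,\eta^-)$ is only assumed to satisfy $a>\delta$, so $a-\delta$ can be arbitrarily small, whereas your bound on the perturbation term is a fixed positive quantity of order $\tilde c(1+\delta)\|\partial_sH_s\|_{L^\infty}\approx 7\delta/16$. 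No choice of the constants in \eqref{Hs} makes a fixed positive number smaller than an arbitrarily small positive number, so the contradiction you are after cannot be reached. The estimates can only ever show that the action does not drop by more than a definite fraction of itself, i.e.\ $b\ge\tfrac12 a>0$; they cannot by themselves exclude $b\in(0,\delta]$.

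The missing ingredient is Lemma \ref{lem:delta1}: since $h_+\in\mathcal{O}(H)\cap C_c^\infty(K)$, \emph{every} critical point of $\cA^{H+h_+}$ with positive action has action strictly greater than $\delta$. The paper's proof therefore does not argue by contradiction on $b\le\delta$; it uses the a priori bounds of \cite[Prop.\ 3.3]{pasquotto2017} (a global $L^\infty$ bound on $\eta$ over all of $\R$ in terms of $\max\{|a|,|b|\}$, playing the role of your $S$/$L$ splitting) together with the nonnegativity of $\|\partial_su\|_{L^2}^2$ to conclude $b\ge\tfrac12a>0$ when $|b|\le a$, and then invokes the spectral gap of Lemma \ref{lem:delta1} to upgrade $b>0$ to $b>\delta$. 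A separate short case rules out $b<-a$ (which your argument also does not address, since the $\eta$-bound degenerates when $|b|$ is large and negative). If you insert the appeal to Lemma \ref{lem:delta1} at the end and treat the case of large negative $b$, your approach becomes essentially the paper's, with the $S$/$L$ decomposition re-deriving the quoted estimate rather than citing it.
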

\begin{proof}
The following proof is an adjustment of the proof of \cite[Cor.\ 3.8]{CieliebakFrauenfelder2009} to the setting of strongly tentacular Hamiltonians and is based on the results proven in \cite[Prop.\ 3.3]{pasquotto2017}.
Abbreviate
\[
a \coloneqq  \cA^{H+h_-}(v^-,\eta^-),\qquad b \coloneqq \cA^{H+h_+}(v^+,\eta^+).
\]
By assumption $a\geq \delta$. Let $u=(v,\eta)\in C^{\infty}\big(\mathbb{R}, C^{\infty}(S^{1}, T^*\R^n) {\times} \mathbb{R}\big)$ be a solution to the equation $\partial_s u=\nabla_{J_s}\cA^{H_s}(u)$ with $\lim_{s\to\pm\infty}u(s)=(v^\pm,\eta^\pm)$. Then our setting satisfies the assumptions of \cite[Prop. 3.3]{pasquotto2017}.

First assume additionally $|b|\leq a$. Then $b-a\leq 0$ and \cite[Prop.\ 3.3, eq.\ (3.7)]{pasquotto2017} gives
\begin{align}
\|\eta\|_{L^{\infty}(\mathbb{R})} & \leq \frac{8}{7} \left(\tilde{c}\Big(\max\{|a|, |b|\}+1\Big)+\frac{b-a}{\varepsilon_{0}}\|J\|_{L^{\infty}}^{\frac{3}{2}}\right)\nonumber\\
& \leq \frac{8}{7} \tilde{c}(a+1)\leq a \tilde{c}\frac{8}{7}\left(1+\frac{1}{\delta}\right)\label{eta}
\end{align}
On the other hand, by equation (3.5) from the proof of \cite[Prop. 3.3]{pasquotto2017}, we have that
\begin{equation}
\|\partial_{s}u\|_{L^{2}(\mathbb{R}\times S^{1})}^{2} \leq \|J\|_{L^{\infty}} (b-a + \|\eta\|_{L^{\infty}}\|\partial_{s}H_{s}\|_{L^{\infty}}).
\label{partialU}
\end{equation}
Combined with \eqref{eta} and \eqref{Hs}, we obtain the following estimate:
\[
b\geq a - \|\eta\|_{L^{\infty}(\mathbb{R})}\|\partial_{s}H_{s}\|_{L^{\infty}}\geq a \left(1- \tilde{c}\frac{8}{7}\Big(1+\frac{1}{\delta}\Big)\|\partial_{s}H_{s}\|_{L^{\infty}}\right)\geq \frac{1}{2}a.
\]
In particular, $\cA^{H+h_-}(v^-,\eta^-)=b>0$. By assumption $h_-\in \mathcal{O}(H)\cap C_c^\infty(K)$, hence by Lemma \ref{lem:delta1} we can conclude that $\cA^{H+h_-}(v^-,\eta^-)\geq \delta$. This implies the result provided $|b|\leq a$.

Now assume $b<-a$. Then $b-a\leq 0$ and by \cite[Prop.\ 3.3, eq.\ (3.7)]{pasquotto2017} we have
\begin{align}
\|\eta\|_{L^{\infty}(\mathbb{R})} & \leq \frac{8}{7} \left(\tilde{c}\Big(\max\{|a|, |b|\}+1\Big)+\frac{b-a}{\varepsilon_{0}}\|J\|_{L^{\infty}}^{\frac{3}{2}}\right)\nonumber\\
& \leq \frac{8}{7} \tilde{c}(1-b) \leq -b \tilde{c}\frac{8}{7}\left(1+\frac{1}{\delta}\right),
\end{align}
where the last inequality comes from the assumption $-b\geq a\geq \delta$. Combining it with \eqref{partialU} and \eqref{Hs}, we obtain the following inequality:
\begin{align*}
a \leq b + \|\eta\|_{L^{\infty}(\mathbb{R})}\|\partial_{s}H_{s}\|_{L^{\infty}} \leq b\left( 1- \tilde{c}\frac{8}{7}\Big(1+\frac{1}{\delta}\Big)\|\partial_{s}H_{s}\|_{L^{\infty}}\right)\leq \frac{1}{2}b< -\frac{1}{2}a,
\end{align*}
which contradicts the assumption $a>\delta>0$. That excludes $b<-a$ and proves the lemma.
\end{proof}

\begin{proof}[Proof of Theorem \ref{thm:posRFHtent}:]
Let us fix a Hamiltonian $H \in \mathcal{H}$. By \cite[Lem.\ 8.8]{pasquotto2018} and \cite[Lem.\ 8.9]{pasquotto2018} for every two close enough regular couples $(H{+}h_1,J_1)$, $(H{+}h_2,J_2)$ with $h_1,h_2\in C_c^\infty(T^*\R^n),\ J_1,J_2\in \scrJ_\star$ and a homotopy $\Gamma=\{H{+}h_s,J_s\}_{s\in\R}$ of compactly perturbations $h_s$ of Hamiltonian $H$ and almost complex structures $J_s\in \scrJ_\star$ one can construct a homomorphism 
\[
\phi^\Gamma \colon  CF_*\left(H+h_1,f_1\right)\to CF_*\left(H+h_2,f_2\right),
\]
which is defined by counting the perturbed flow lines with cascades (cf.\ \cite[Sec.\ 11.1]{Audin2014}, \cite[Lem.\ 7.2]{pasquotto2018}). Moreover, $\phi^\Gamma$ satisfies $\phi^\Gamma\circ \partial_1= \partial_2 \circ \phi^\Gamma$, thus induces a homomorphism on the homology level
\[
\Phi^\Gamma \colon RFH(H+h_1,J_1)\to RFH(H+h_2,J_2).
\]
Moreover, by Lemma \ref{lem:delta2} we know that if the two regular couples $(H{+}h_1,J_1),\linebreak[1](H{+}h_2,J_2)$ are close enough, then the preimage of $CF_*^+(H{+}h_2,f_2)$ under $\phi^\Gamma$ lies in $CF_*^+(H{+}h_1,f_1)$. In other words
\begin{equation}\label{phiGamma}
\phi^\Gamma\left(CF^{\leq 0}\left(H+h_1,f_1\right)\right)\subseteq CF^{\leq 0}\left(H+h_2,f_2\right).
\end{equation}
This together with \eqref{filtration} allows us to infer that the restriction $\phi^\Gamma_+$ of $\phi^\Gamma$ to\linebreak[4] $CF_*^+\left(H{+}h_1,f_1\right)$ constructed by counting the perturbed flow lines with cascades between critical points with positive action, commutes with the respective boundary operators $\partial^{\scriptscriptstyle +}_1$ and $\partial^{\scriptscriptstyle +}_2$ and thus induces a homomorphism $\Phi^\Gamma_+ \colon  RFH^+(H{+}h_1,J_1)\to RFH^+(H{+}h_2,J_2)$.

Now, we show that $\Phi^\Gamma_+$ is an isomorphism, using the fact that $\Phi^\Gamma$ is an isomorphism by \cite[Prop.\ 11.2.9]{Audin2014}. Via the inverse homotopy $\Gamma^{-1}=\{H{+}h_{1-s},J_{1-s}\}_{s\in\R}$, one can construct analogously a homomorphism
\[
\phi^{\Gamma^{-1}} \colon  CF_*\left(H+h_2,f_2\right)\to CF_*\left(H+h_1,f_1\right).
\]
As this homomorphism also satisfies $\phi^{\Gamma^{-1}}\circ \partial_2 = \partial_1 \circ \phi^{\Gamma^{-1}}$ and
\begin{equation}
\phi^{\Gamma^{-1}}\left(CF^{\leq 0}\left(H+h_2,f_2\right)\right) \subseteq CF^{\leq 0}\left(H+h_1,f_1\right),\label{phiGamma-1}
\end{equation}
it induces a homomorphism $\Phi^{\Gamma^{-1}}_+ \colon  RFH^+(H{+}h_2,J_2)\to RFH^+(H{+}h_1,J_1)$.

Finally, by \cite[Prop.\ 11.2.9]{Audin2014} there exists a homomorphism
\begin{fleqn}
\begin{align}
S \colon  CF_*\left(H+h_1,f_1\right) &\to CF_{*+1}\left(H+h_1,f_1\right),\notag\\
\text{satisfying}\mspace{100mu} \phi^{\Gamma^{-1}}\circ \phi^\Gamma- \operatorname{Id} & = S\circ \partial_1+\partial_1\circ S\label{S1}\\
\text{and} \mspace{80 mu} S \left(CF^{\leq 0}\left(H+h_1,f_1\right)\right) & \subseteq CF^{\leq 0}\left(H+h_1,f_1\right),\label{S2}
\end{align}
\end{fleqn}
where \eqref{S2} comes from applying Lemma \ref{lem:delta2} once again this time to the flowlines with cascades coming from a homotopy of homotopies (cf.\ \cite[Thm.\ 11.3.11]{Audin2014}). By combining \eqref{filtration}, \eqref{phiGamma}, \eqref{phiGamma-1} and \eqref{S2} we infer that the restriction of $S$ to $CF^+\left(H{+}h_1,f_1\right)$ also satisfies \eqref{S1} with $\phi^\Gamma_+$ and $\phi^{\Gamma^{-1}}_+$, thus establishing that $\Phi^\Gamma_+$ and $\Phi^{\Gamma^{-1}}_+$ are also isomorphisms on the homology level.

This proves that $RFH^+(H)$ is invariant under small enough\footnote{The invariance for any compact perturbation follows by splitting a given perturbation into a sequence of smaller perturbations (see \cite{CieliebakFrauenfelder2009}, p.\ 275)} compactly supported perturbations of $H$ and (via $h_1{=}h_2{=}0$) is invariant of $J$ or $(f,g)$. 
\end{proof}
\section{Tentacular hyperboloids}
\subsection{H{\"o}rmander classification}
Let $A$ be a non-degenerate, quadratic, symmetric matrix and consider the non-degenerate quadratic Hamiltonian $H$ on $(T^*\R^n,\omega_n)$
\begin{equation}
H(x)\coloneqq \frac{1}{2}x^TAx -1.\label{quadHam}
\end{equation}
The hypersurface $\Sigma\coloneqq H^{-1}(0)$ is diffeomorphic to $S^{l-1}\times \R^{2n-l}$, where $(l,2n-l)$ is the signature of $A$. A hyperboloid is the $0$-level set of a quadratic Hamiltonian $H$ as in \eqref{quadHam} with $1\leq l  \leq 2n-1$.

\begin{remark} \label{rem:Lioville}
Note that every hyperboloid is a hypersurface of contact type, as the radial Liouville vector field $Y=\frac{1}{2}x\partial_x$ satisfies $dH_x(Y)=H(x)+1=1\big|_\Sigma>0$. Moreover, the Hamiltonian vector field $X_H$ coincides with the Reeb vector field corresponding to $\iota_Y\omega_n\big|_\Sigma$, as $X_H\in \ker \omega_n\big|_\Sigma$ and $\iota_Y\omega_n(\cdot,X_H)=dH(Y)=1$ on $\Sigma$. 
\end{remark}
In \cite[Sec.\ 9]{pasquotto2018} the last author together with Pasquotto and Vandervorst introduced the notion of \emph{symplectic hyperboloids}, which are equivalence classes of the set of hyperboloids under the action of the linear symplectic group $\operatorname{Sp}(\R^{2n})$. A symplectic hyperboloid is called a \emph{tentacular hyperboloid} if it admits a strongly tentacular representative of the form \eqref{quadHam}. Proposition \ref{prop:TentEx} above provides plenty of examples of tentacular hyperboloids.

According to H{\"o}rmander \cite[Thm.\ 3.1]{Hormander1995} each equivalence class is uniquely determined by the eigenvalues of $\mathbb{J}A$. Observe that if $\lambda$ is an eigenvalue of $\mathbb{J}A$ with a corresponding $m\times m$ block in the Jordan decomposition, then its additive inverse and its complex conjugate are also eigenvalues of $\mathbb{J}A$ each with a $m\times m$ block.

By H{\"o}rmander classification, $T^*\R^n$ splits into a direct sum of symplectic subspaces $S_i$. In other words, after a symplectic change of coordinates we can assume that the matrix $A$ is a diagonal block matrix consisting of matrices $A_i=A|_{S_i}$. Each matrix $A_i$ corresponds to a $m\times m$ block with an eigenvalue $\lambda$ in the Jordan decomposition of $\mathbb{J}A$ and it is determined as follows:
\begin{enumerate}[label=(\alph*)]
\item if $\operatorname{Im}(\lambda)=0$, then $S_i\coloneqq T^*\R^m$ and $A_i$ is a $2m \times 2m$ block matrix $\left(\begin{smallmatrix} 0 & B\\ B^T & 0\end{smallmatrix}\right)$ where $B=\{b_{j,k}\}_{j,k=1}^m$ with 
\[
b_{j,k}\coloneqq \left\lbrace\begin{array}{c l}
|\lambda| &  \quad \textrm{if}\quad j=k,\\
1 & \quad \textrm{if}\quad j= k+1,\\
0 &\quad \textrm{otherwise}.
\end{array}
\right.
\]
Thus $B$ is a $m \times m$ matrix with $|\lambda|$'s on the diagonal and with $1$'s under the diagonal for $m>1$. 
The signature of $A_i$ is $(m,m)$ and the corresponding Hamiltonian $H_i \colon T^*\R^m \to \R$ is given by
\[
H_i(q,p)\coloneqq |\lambda|\sum_{j=1}^mq_jp_j+\sum_{j=1}^{m-1}q_{j+1}p_j.
\]
\item if $\operatorname{Re}(\lambda)\neq 0,\ \operatorname{Im}(\lambda)\neq 0$, then $S_i\coloneqq T^*\R^{2m}$ and
 $A_i$ is an $4m\times 4m$ block matrix $A_i\coloneqq  \left(\begin{smallmatrix} 0 & B\\ B^T & 0\end{smallmatrix}\right)$ where $B=\{b_{j,k}\}_{j,k=1}^{2m}$ with
\[
b_{j,k}\coloneqq \left\lbrace\begin{array}{c l}
|\operatorname{Re}(\lambda)| &\quad \textrm{if}\quad j=k,\\
|\operatorname{Im}(\lambda)| &\quad \textrm{if}\quad \frac{j}{2} \in \mathbb{N} \quad \textrm{and}\quad  k= j-1,\\
-|\operatorname{Im}(\lambda)| &\quad \textrm{if}\quad \frac{k}{2} \in \mathbb{N} \quad \textrm{and}\quad j= k-1,\\
1 &\quad \textrm{if}\quad k=j+2,\\
0 &\quad \textrm{otherwise}.
\end{array}
\right.
\]
The signature of $A_i$ is $(2m,2m)$ and $H_i \colon T^*\R^{2m} \to \R$ is given by
\[
H_i(q,p)\coloneqq   \sum_{j=1}^{2m-2}q_jp_{j+2}+|\operatorname{Re}(\lambda)|\sum_{j=1}^{2m}q_jp_j+|\operatorname{Im}(\lambda)|\sum_{j=1}^m \big( q_{2j}p_{2j-1}-q_{2j-1}p_{2j} \big).
\]

\item if $\operatorname{Re}(\lambda)=0$, then $S_i\coloneqq T^*\R^m$ and for some $\gamma=\pm 1$, $A_i$ is an $2m\times 2m$ block matrix $\left(\begin{smallmatrix} B & 0\\ 0 & B^P\end{smallmatrix}\right)$,
where $B=\{b_{j,k}\}_{j,k=1}^{m}$ with
\[
b_{j,k}\coloneqq \gamma\left\lbrace\begin{array}{c l r}
2|\operatorname{Im}(\lambda)| &\quad \textrm{if}\quad j=k=\frac{m+1}{2}\in \mathbb{N},\\
-2 &\quad \textrm{if}\quad j=k=\frac{m+2}{2}\in \mathbb{N},\\
|\operatorname{Im}(\lambda)| &\quad \textrm{if}\quad j\neq k\quad \textrm{and}\quad j+k=m+1,\\
-1 &\quad \textrm{if}\quad j\neq k\quad \textrm{and}\quad j+k=m+2,\\
0 &\quad \textrm{otherwise}.
\end{array}
\right.
\]
and $B^P$ is the reflection of $B$ with respect to the anti-diagonal. That is, $B^P=\{b_{\delta(j,k)}\}_{j,k=1}^m$ with $\delta(j,k)=(m{+}1\,{-}j, m{+}1\,{-}k)$.
The signature of $A_i$ is $(m,m)$ if $m$ is even and $(m+\gamma,m-\gamma)$ if $m$ is odd, and the corresponding Hamiltonian $H_i \colon T^*\R^m \to \R$ is equal to
\[
H_i(q,p)\coloneqq \frac{\gamma}{2}\left(|\operatorname{Im}(\lambda)|\sum_{j=1}^m \big(q_jq_{m+1-j}+ p_jp_{m+1-j} \big)-\sum_{j=1}^{m-1} \big(q_{j+1}q_{m+1-j} +p_jp_{m-j} \big)\right).
\]
\end{enumerate}

The H{\"o}rmander classification determines a unique (up to permutation of the blocks) representative of the equivalence class of non-degenerate matrices under the action of $\operatorname{Sp}(\R^{2n})$. In particular, after a linear symplectic change of coordinates, every quadratic Hamiltonian of the form \eqref{quadHam} can be represented as a sum of Hamiltonians $H_i \colon S_i\to\R,$
\[
H(x)=\sum_iH_i(x_i)-1, \qquad H_i(x_i)\coloneqq  \frac{1}{2} x_i^TA_ix_i, \qquad x_i\in S_i,
\]
with $A_i$ either of type (a), (b) or (c).
\begin{remark}
Observe, that the Hamiltonian $H_i(x_i)=\frac{1}{2} x_i^TA_ix_i$ for a matrix $A_i$ either of type (a) or (b) vanishes on the $0$-section $\chi_0=\R^{m}\times\{0\}\subseteq T^*\R^m =S_i$. In other words, if a quadratic Hamiltonian $H \colon T^*\R^m\to \R$ as in \eqref{quadHam} comes from a non-degenerate, symmetric matrix $A$ and all the eigenvalues of the matrix $\mathbb{J}_m A$ have non-zero real part, then there exists a Lagrangian subspace of $T^*\R^m$ on which the Hamiltonian $H+1$ vanishes.
\end{remark}

A classical result \cite[Lem.\ 2.43]{mcduff2005} shows that a positive definite matrix is simplectically diagonalizable. More precisely, by applying a linear symplectic change of coordinates to a positive definite matrix, we can obtain a diagonal matrix with couples of positive real numbers $\mu_1,\mu_1,\dots, \mu_k,\mu_k,\ \mu_j\leq \mu_{j+1}$ on the diagonal. In the words of the H{\"o}rmander classification: a positive definite matrix corresponds to $k$ block matrices of type (c) and dimension $2$. Therefore, we can conclude that if $A$ is a positive definite matrix, then all the eigenvalues of $\mathbb{J}A$ are purely imaginary and equal $\pm i\mu_j,\ j=1,\dots, k$.

Given $x \in \mathbb{R}^n$, write $x = (x',x'')$ with respect to the splitting $\mathbb{R}^n = \mathbb{R}^k \oplus \mathbb{R}^{n-k}$. Define the symplectic splitting $T^*\R^n\cong T^*\R^k \times T^*\R^{n-k}$ via the symplectomorphism 
\begin{equation}\label{splitting}
\begin{aligned}
\sigma \colon \big( T^*\R^n, \omega_n \big) &\to \big( T^*\R^k \times T^*\R^{n-k}, \omega_k \oplus \omega_{n-k} \big)\\
 \sigma (q,p)  &\coloneqq  \big( (q',p'), (q'',p'') \big).
\end{aligned}
\end{equation}
Then $\sigma(\Sigma)\cap\left(T^*\R^k\times\{0\}\right)=\Sigma_0\times\{0\}$. 

\begin{remark}\label{H2}
From the arguments presented above we can deduce the following properties of the matrices and Hamiltonians satisfying \eqref{defH}:
\begin{enumerate}[label=(\roman*)]
\item The eigenvalues of $\mathbb{J}_kA_0$ are purely imaginary;
\item Without loss of generality we can assume that $A_0$ is a diagonal matrix with couples of positive real numbers $\mu_1,\mu_1,\dots, \mu_k,\mu_k,\ \mu_j\leq \mu_{j+1}$ on the diagonal, which correspond to eigenvalues $\pm i\mu_j$ of $\mathbb{J}_kA_0$;
\item The matrix $A_1$ has signature $(n-k,n-k)$;
\item Define $\Sigma_0\coloneqq H_0^{-1}(0), \Sigma\coloneqq H^{-1}(0)$ and 
\begin{equation}\label{Sigma1}
\Sigma_1\coloneqq \Sigma \cap \sigma^{-1}(T^*\R^k\times S),
\end{equation}
where $S$ denotes the $n{-}k$-dimensional subspace of $T^*\R^{n-k}$ spanned by the eigenvectors of $A_1$ corresponding to positive eigenvalues. Then there exist diffeomorphisms $\phi_0 \colon \Sigma_0\rightarrow S^{2k-1}$ and $\phi \colon \Sigma\rightarrow S^{n+k-1}\times \R^{n-k}$, such that $\phi(\Sigma_1)=S^{n+k-1}\times\{0\}$. 
\item\label{DefnS} Without loss of generality, we can assume that the Hamiltonian $H_1$ satisfies $H_1(x, 0)= 0$ for all $x\in \R^{n-k}$, i.e.\ $H_1$ vanishes on the $0$-section $\chi_0=\R^{n-k}\times\{0\}\subseteq T^*\R^{n-k}$.
In particular,
$$
\sigma^{-1}(\Sigma_0 \times \chi_0) \subseteq \Sigma.
$$
\item For $S$ defined as above we have $S\cap \chi_0=\{0\}$.
\end{enumerate}
\end{remark}
\begin{remark}
We emphasise (cf. Remark \ref{works-for-any-tent}) that all points of Remark \ref{H2} only use that $H$ is of the form \eqref{defH} with $A_0$ positive definite and $\mathbb{J}A_1$ hyperbolic.
\end{remark}

The embedding $T^*\R^k\cong T^*\R^k\times \{0\}^{2(n-k)}\hookrightarrow T^*\R^n$ induces an inclusion
\begin{equation}
\label{eqj}
j \colon \Sigma_0 \hookrightarrow \Sigma_1,
\end{equation}
and a retraction $r \colon \Sigma \to \Sigma_1$. Recall that the Umkehr map $j_! \colon H_*(\Sigma_1) \to H_{* +k-n}( \Sigma_0)$ is defined as the composition 
\[
j_! \coloneqq ( - \frown [\Sigma_0] ) \circ j^* \circ ( - \frown [\Sigma_1])^{-1}. 
\]
For later purposes, we observe that the composition 
\begin{equation}
    \label{umkehr}
    j_! \circ r_* \colon H_*( \Sigma) \to H_{* + k - n}(\Sigma_0),
\end{equation}
is an isomorphism for $* = n +k - 1$ and zero otherwise.
\subsection{Periodic orbits}
In this subsection we discuss the properties of non-degenerate periodic orbits of quadratic Hamiltonians $H(x,y)=H_0(x)+H_1(y)$ on $T^\ast\R^n=T^\ast\R^k\oplus T^\ast\R^{n-k}$ satisfying \eqref{defH}. In particular, we establish an action and degree-preserving 1-to-1 correspondence between periodic orbits of $X_H$ on $\Sigma=H^{-1}(0)$ and periodic orbits of $X_{H_0}$ on $\Sigma_0=H^{-1}_0(0)$.

\begin{lem}\label{lem:orbits}
There is an action preserving $1$-to-$1$ correspondence of the periodic orbits of $X_{H}$ on $\Sigma$ with the periodic orbits of $X_{H_0}$ on $\Sigma_0$. Explicitly, for $\eta\neq 0$
\[
v\colon S^1\to \Sigma,\qquad \partial_t v=\eta X_H(v), 
\]
if and only if $ \sigma(v)=(v_0,0)$, where
\[
v_0\colon S^1 \to \Sigma_0, \qquad \partial_t v_0=\eta X_{H_0}(v_0).
\]
Moreover $\mathcal{A}^H(v)=\mathcal{A}^{H_0}(v_0)=\eta$, for the Rabinowitz action functionals of $H$ on $(T^*\R^n,\omega_n)$ and $H_0$ on $(T^*\R^k,\omega_k)$ respectively. 
\end{lem}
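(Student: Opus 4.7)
The plan is to exploit that, under the symplectomorphism $\sigma$ of \eqref{splitting}, the Hamiltonian splits as $H = H_0 + H_1$ and the vector field decomposes as $X_H = (X_{H_0}, X_{H_1})$ (since $\omega_n = \omega_k \oplus \omega_{n-k}$ under $\sigma$). Writing $\sigma \circ v = (v_0, v_1)$, the critical-point equation $\partial_t v = \eta X_H(v)$ with $\eta \neq 0$ separates into two independent ODEs, $\partial_t v_0 = \eta X_{H_0}(v_0)$ on $T^*\R^k$ and $\partial_t v_1 = \eta X_{H_1}(v_1)$ on $T^*\R^{n-k}$, while the level-set condition $v(S^1) \subseteq \Sigma$ becomes $H_0(v_0) + H_1(v_1) \equiv 0$.

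The heart of the argument, and the step I expect to be the only nontrivial one, is showing that the second ODE forces $v_1 \equiv 0$ whenever $\eta \neq 0$. Since $H_1(y) = \tfrac{1}{2} y^T A_1 y$ is quadratic, $X_{H_1}$ is the linear vector field $y \mapsto \mathbb{J}_{n-k} A_1\, y$, so I would integrate to $v_1(t) = \exp(\eta t\, \mathbb{J}_{n-k} A_1)\, v_1(0)$. Periodicity $v_1(1) = v_1(0)$ forces $1$ to be an eigenvalue of $\exp(\eta \mathbb{J}_{n-k} A_1)$, equivalently that some eigenvalue $\mu$ of $\mathbb{J}_{n-k} A_1$ satisfies $\eta \mu \in 2\pi i \Z$. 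Hyperbolicity of $\mathbb{J}_{n-k} A_1$ (no purely imaginary eigenvalues), together with non-degeneracy of $A_1$ (ruling out $\mu = 0$), excludes this for every $\eta \neq 0$; hence $\exp(\eta \mathbb{J}_{n-k} A_1) - \operatorname{Id}$ is invertible and $v_1 \equiv 0$. Substituting back into the level-set identity gives $H_0(v_0) = -H_1(0) = 0$, so $v_0(S^1) \subseteq \Sigma_0$. Conversely, $X_{H_1}(0) = 0$ because $0$ is a critical point of $H_1$, so given any periodic orbit $v_0$ of $X_{H_0}$ on $\Sigma_0$ the loop $\sigma^{-1}(v_0, 0)$ is automatically a periodic orbit of $X_H$ on $\Sigma$ with the same period, which delivers the bijection.

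For the action identities, I would use that the Liouville primitive $\lambda_n = \tfrac{1}{2}(p\,dq - q\,dp)$ decomposes under $\sigma$ as $\lambda_k \oplus \lambda_{n-k}$, and that $\lambda_{n-k}$ vanishes identically along the constant zero loop; hence $\int_{S^1} v^* \lambda_n = \int_{S^1} v_0^* \lambda_k$. Combined with $H(v) \equiv 0$ and $H_0(v_0) \equiv 0$, this yields $\mathcal{A}^H(v, \eta) = \mathcal{A}^{H_0}(v_0, \eta)$. To identify the common value with $\eta$, I would invoke Remark \ref{rem:Lioville}: the radial Liouville field $Y = \tfrac{1}{2} x \partial_x$ satisfies $\iota_Y \omega = \lambda$ and $dH(Y) \equiv 1$ on $\Sigma$, so $\lambda(X_H) = \omega(Y, X_H) = dH(Y) = 1$ along $v$, and $\mathcal{A}^H(v, \eta) = \eta \int_{S^1} \lambda(X_H(v))\, dt = \eta$; the same computation on $\Sigma_0$ gives $\mathcal{A}^{H_0}(v_0, \eta) = \eta$. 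Everything reduces to the single linear-algebra step highlighted above: hyperbolicity of $\mathbb{J}_{n-k} A_1$ is precisely the mechanism that collapses the non-compact $\R^{n-k}$ factor of $\Sigma \cong S^{n+k-1} \times \R^{n-k}$ to a point from the standpoint of closed characteristics.
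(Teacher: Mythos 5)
Your proposal is correct and follows essentially the same route as the paper: both reduce the problem to the invertibility of $\exp(\eta\,\mathbb{J}_{n-k}A_1)-\operatorname{Id}$ via the eigenvalue condition $e^{\eta\mu}=1$, excluded by hyperbolicity, and both compute the action using the radial Liouville vector field $Y=\frac{1}{2}x\partial_x$ with $dH(Y)|_\Sigma=1$.
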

\begin{proof}
For a Hamiltonian $H$ as in \eqref{quadHam} the Hamiltonian flow of $X_H$ is given by
\[
\phi^t (x)=\exp(t \mathbb{J}_n A)x.
\]
Consequently, $(v,\eta)\in \Crit(\cA^H)$ if and only if $v(0)\in \Sigma$ and $\phi^\eta(v(0))=v(0)$ or, equivalently, $v(0)\in \ker \! \big( \! \exp(\eta \mathbb{J}_n A)-\operatorname{Id} \! \big)$.  By assumption the matrix $A$ can be presented with respect to the symplectic splitting \eqref{splitting} as a block matrix:
\[
A\coloneqq \left(\begin{array}{c c}
A_0 & 0\\
0 & A_1
\end{array}
\right),
\]
where $A_0$ and $A_1$ are as in \eqref{defH}. As a result,
\[
\ker \! \big(\! \exp(\eta \mathbb{J}_n A)-\operatorname{Id}\!\big)=\ker\!\big(\!\exp(\eta \mathbb{J}_k A_0)-\operatorname{Id}\!\big)\oplus \ker \! \big( \! \exp(\eta \mathbb{J}_{n-k} A_1)-\operatorname{Id} \! \big).
\]
A simple calculation shows that $\det\! \big( \! \exp(\eta \mathbb{J}_{n-k} A_1)-\operatorname{Id}\!\big)=0$ if and only if $e^{\eta\mu}=1$ for some eigenvalue $\mu$ of $\mathbb{J}_{n-k}A_1$. However, by assumption all the eigenvalues of $\mathbb{J}_{n-k}A_1$ have non-zero real parts, hence $e^{\eta\mu}\neq 1$ for all $\eta\neq 0$.

Consequently, $(v,\eta)\in \Crit(\cA^H)$ and $\eta\neq 0$ if and only if $v(0)\in \Sigma$ and $\sigma \circ v(0)=(w_0,0)$ for some $w_0\in \ker\! \big( \! \exp(\eta \mathbb{J}_k A_0)-\operatorname{Id}\! \big)$. If we define $v_0(t)\coloneqq \phi^{\eta t}(w_0)$ then $(v_0,\eta)\in \Crit(\cA^{H_0})$, as $\sigma(\Sigma)\cap\left(T^*\R^{k}{\times}\{0\}\right)=\Sigma_0\times\{0\}$.

 Finally, recall from Remark \ref{rem:Lioville} that $Y=\frac{1}{2}x\partial_x$ is a Liouville vector field for $\omega_n$ and  satisfies $dH_x(Y)=1$ for all $x\in\Sigma$.
Hence, we have for $(v,\eta)\in \Crit\left(\cA^H\right)$
\begin{equation}\label{action}
\cA^{H}(v,\eta)=\int \lambda_n(\partial_tv)=\eta\int\omega_n(Y,X_H)=\eta\int dH(Y)=\eta,
\end{equation}
where $\lambda_n\coloneqq \iota_Y\omega_n$ is the Liouville form for $Y$. The same arguments for $Y_0=\frac{1}{2}x\partial_x$ on $T^\ast \R^k$ show that $\cA^{H_0}(v_0,\eta)=\eta$. This concludes the proof.
\end{proof}
\begin{cor}\label{cor:CritVal}
Denote by $\{\pm i \mu_l\}_{l=1}^k,\ \mu_l>0$ the eigenvalues of $\mathbb{J}_k A_0$. Then
\[
\operatorname{CritVal}\left(\cA^{H}\right)= \operatorname{CritVal}\left(\cA^{H_0}\right)=\bigcup_{l=1}^k \frac{2\pi }{\mu_l}\Z .
\]
\end{cor}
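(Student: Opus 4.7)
The strategy is to use the action-preserving bijection of Lemma \ref{lem:orbits} to reduce the computation to $\operatorname{CritVal}(\mathcal{A}^{H_0})$, and then to diagonalise $\exp(\eta \mathbb{J}_k A_0)$ using the normal form for $A_0$ supplied by Remark \ref{H2}.

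First, Lemma \ref{lem:orbits} provides an action-preserving bijection between non-constant elements of $\operatorname{Crit}(\mathcal{A}^H)$ on $\Sigma$ and non-constant elements of $\operatorname{Crit}(\mathcal{A}^{H_0})$ on $\Sigma_0$. Both hypersurfaces are non-empty, and constant orbits on each one contribute the single critical value $0$ to the corresponding functional. Hence $\operatorname{CritVal}(\mathcal{A}^H) = \operatorname{CritVal}(\mathcal{A}^{H_0})$, and it suffices to compute the latter.

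Second, by \eqref{action} the critical value at $(v_0,\eta) \in \operatorname{Crit}(\mathcal{A}^{H_0})$ equals $\eta$, and by the argument in the proof of Lemma \ref{lem:orbits}, a non-constant critical point with period $\eta \neq 0$ exists iff there is a non-zero $w_0 \in \Sigma_0$ with $\exp(\eta \mathbb{J}_k A_0) w_0 = w_0$. By Remark \ref{H2}(ii) I may assume, after a linear symplectic change of coordinates, that $A_0$ is diagonal with entries $\mu_1,\mu_1,\ldots,\mu_k,\mu_k$, so that $\mathbb{J}_k A_0$ is a direct sum of $2 \times 2$ blocks each conjugate to $\mu_l \mathbb{J}_1$. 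Consequently $\exp(\eta \mathbb{J}_k A_0)$ is a direct sum of planar rotations by angles $\mu_l \eta$, so its kernel after subtracting the identity is non-trivial iff $\mu_l \eta \in 2\pi \mathbb{Z}$ for some $l$, i.e.\ iff $\eta \in \bigcup_{l=1}^k \frac{2\pi}{\mu_l}\mathbb{Z}$.

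Third, to finish I must check that whenever this kernel is non-trivial it actually meets $\Sigma_0$, not just $T^*\R^k \setminus\{0\}$. But the kernel is a linear subspace and $A_0$ is positive definite, so for any non-zero kernel vector $w_0$ one can choose $c > 0$ with $\frac{c^2}{2} w_0^T A_0 w_0 = 1$, i.e.\ $c w_0 \in \Sigma_0$; and $c w_0$ lies in the same kernel. Combined with the fact that $0 \in \frac{2\pi}{\mu_l}\mathbb{Z}$, this yields the advertised identification.

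The main (and only) delicate point is the converse direction---showing every candidate $\eta$ in the union is genuinely realised by an orbit on $\Sigma_0$---which is precisely where positive definiteness of $A_0$ enters essentially, via the rescaling to $\Sigma_0$. The rest is bookkeeping on top of Lemma \ref{lem:orbits}.
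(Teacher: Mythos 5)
Your proposal is correct and follows essentially the same route as the paper: reduce to $\operatorname{CritVal}(\cA^{H_0})$ via Lemma \ref{lem:orbits}, use $\cA^{H_0}(v_0,\eta)=\eta$, and characterise the $\eta$ for which $\ker\big(\exp(\eta\mathbb{J}_kA_0)-\operatorname{Id}\big)$ is non-trivial using the eigenvalues $\pm i\mu_l$. Your explicit rescaling argument showing the kernel actually meets $\Sigma_0$ is a detail the paper leaves implicit, but it is the same proof.
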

\begin{proof}
By Lemma \ref{lem:orbits} it suffices to prove the second equality. By the proof of Lemma \ref{lem:orbits}, we know that $(v_0,\eta)\in \Crit\left(\cA^{H_0}\right)$ if and only if
\begin{equation}\label{CritEta}
v_0(t)=\exp(t\eta \mathbb{J}_kA_0)w_0\quad \textrm{and}\quad  w_0\in \Sigma_0\cap \ker \! \big(\! \exp(\eta \mathbb{J}_kA_0)-\operatorname{Id} \! \big).
\end{equation}
As $\ker\big(\exp(\eta \mathbb{J}_kA_0)-\operatorname{Id}\big)\neq \{0\}$ if and only if there exists an eigenvalue $\pm i \mu_l$ of $\mathbb{J}_kA_0$, such that $\eta \mu_l \in 2\pi \Z$, and as $\mathcal{A}^{H_0}(v_0,\eta)=\eta$, the corollary follows.
\end{proof}

\begin{remark}\label{rem:orbits} 
\label{rem:Sk}
We define the following subspaces of the critical sets
\begin{equation}\begin{aligned}
\Lambda_0^\eta &\coloneqq  \left\lbrace (v,\mathfrak{y})\in \Crit\left(\cA^{H_0}\right)\ \big|\ \mathfrak{y}=\eta\right\rbrace,\\
\Lambda^\eta &\coloneqq  \left\lbrace (v,\mathfrak{y})\in \Crit\left(\cA^{H}\right)\ \big|\ \mathfrak{y}=\eta\right\rbrace.
\end{aligned} \label{LambdaEta}
\end{equation}
In particular $\Lambda_0^0= \Sigma_0\times\{0\}$ and $\Lambda^0=\Sigma\times\{0\}$.
Meanwhile for $\eta \neq 0$, we have by Lemma \ref{lem:orbits} $\cA^H(\Lambda^\eta)=\cA^{H_0}(\Lambda_0^\eta)=\eta$ and
\[
\Lambda^\eta= \left\lbrace (\sigma^{-1}(v_0,0), \eta)\ \big|\ (v_0, \eta) \in \Lambda_0^\eta\right\rbrace\cong \Lambda_0^\eta.
\]
As $\Sigma_0$ is diffeomorphic to $S^{2k-1}$, we obtain from \eqref{CritEta} for $\eta \in \operatorname{CritVal} (\cA^{H_0})$ that $\Lambda_0^\eta$ is diffeomorphic to $S^{2m-1}$, where $m\coloneqq  \frac{1}{2}\dim \ker \! \big( \! \exp(\eta \mathbb{J}_kA_0)-\operatorname{Id} \! \big)$.
\end{remark}

Next, we show that the correspondence between the periodic orbits of $X_H$ on $\Sigma$ and the periodic orbits of $X_{H_0}$ on $\Sigma_0$ is not only action- but also degree-preserving.

\begin{prop}\label{prop:muCZ}
If $(v,\eta), \eta\neq 0$ is a periodic orbit of $X_{H}$ on $\Sigma\subseteq T^*\R^n$ with $\sigma(v)=(v_0,0)$ and $(v_0,\eta)$ a periodic orbit of $X_{H_0}$ on $\Sigma_0\subseteq T^*\R^n$, then 
\[
\tCZ(v,\eta)=\tCZ (v_0,\eta).
\]
\end{prop}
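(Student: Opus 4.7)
The plan is to reduce the claim to linear algebra by exploiting the direct-sum splitting $H = H_0\oplus H_1$ with respect to the decomposition \eqref{splitting}, together with additivity of the Conley-Zehnder (more precisely Robbin-Salamon) index under symplectic direct sums.

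First, I would identify a symplectic splitting of the contact hyperplane along the orbit. At any point $\sigma^{-1}(x,0)$ with $x \in \Sigma_0$, the vanishing of $dH_1$ on the zero-section $\chi_0$ (Remark~\ref{H2}) implies that $T\Sigma$ decomposes via $\sigma$ as $T_x\Sigma_0 \oplus T^*\R^{n-k}$. Since $\lambda_n$ pulls back to $\lambda_k$ on the $T^*\R^k$-slice and vanishes along $T^*\R^{n-k}$ at such a point, the contact hyperplane $\xi = \ker(\lambda_n|_{T\Sigma})$ along $v$ decomposes \emph{symplectically} as
\[
\xi \;=\; \xi_0 \,\oplus\, T^*\R^{n-k},
\]
where $\xi_0\subset T\Sigma_0$ is the contact hyperplane along $v_0$. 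Since $H$ itself splits, the linearized Hamiltonian flow along $v$ is block-diagonal: it restricts to $D\phi^{\eta t}_{H_0}|_{v_0(t)}$ on $T^*\R^k$ and to $\Phi_1(t):=\exp(t\eta\mathbb{J}_{n-k}A_1)$ on $T^*\R^{n-k}$. Choosing a symplectic trivialization of $\xi$ along $v$ respecting this splitting, additivity of the Robbin-Salamon index under symplectic direct sums yields
\[
\tCZ(v,\eta) \;=\; \tCZ(v_0,\eta) \;+\; \CZ(\Phi_1).
\]

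Next, I would show $\CZ(\Phi_1)=0$. Hyperbolicity of $\mathbb{J}_{n-k}A_1$ (cf.\ Remark~\ref{works-for-any-tent}) means every eigenvalue $\mu_i$ of $\mathbb{J}_{n-k}A_1$ satisfies $\operatorname{Re}(\mu_i)\neq 0$, so $e^{t\eta\mu_i}\neq 1$ for all $t\in(0,1]$; the path $\Phi_1$ therefore meets the identity only at $t=0$. The Robbin-Salamon crossing formula hence reduces to the endpoint contribution
\[
\CZ(\Phi_1) \;=\; \tfrac12\operatorname{sig}\Gamma(0), \qquad \Gamma(0)(w,w) \;=\; \omega_{n-k}(w,\,\eta\mathbb{J}_{n-k}A_1\,w) \;=\; \eta\,\langle w, A_1 w\rangle.
\]
By Remark~\ref{H2}(iii) the matrix $A_1$ has signature $(n-k,n-k)$, so $\operatorname{sig}(\eta A_1)=0$ regardless of the sign of $\eta$, and therefore $\CZ(\Phi_1)=0$. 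Combining the two displays yields $\tCZ(v,\eta)=\tCZ(v_0,\eta)$.

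The chief technical delicacy is that at a Morse-Bott orbit the linearized return map need not be non-degenerate on $\xi$ at the endpoints, so $\tCZ$ must be interpreted via the Robbin-Salamon generalization, and one must check that a symplectic trivialization of $\xi$ adapted to the splitting $\xi_0\oplus T^*\R^{n-k}$ is compatible, up to homotopy, with whichever framing convention normalizes $\tCZ$ in the paper. Once that compatibility is in place, everything else reduces to the crossing-form computation above.
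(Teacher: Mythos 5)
Your proof is correct and rests on the same two pillars as the paper's: additivity of the Conley--Zehnder index over a flow-invariant symplectic splitting, and the vanishing of $\CZ\bigl(\exp(t\eta\mathbb{J}_{n-k}A_1)\bigr)$ via the crossing formula (the only crossing is at $t=0$ by hyperbolicity, and the crossing form $\eta\langle\cdot,A_1\cdot\rangle$ has signature zero since $\operatorname{sgn}(A_1)=0$). The route to the splitting, however, is genuinely different. The paper never decomposes the contact hyperplane directly: it first proves $\tCZ(\gamma)=\CZ^{T\R^{2n}}(\gamma)$ by splitting $T\R^{2n}=\xi\oplus\xi^{\omega}$ along the orbit and checking that the flow acts as the identity on the frame $(X_H,Y)$ of $\xi^{\omega}$, then splits the full linearised flow as $\exp(t\eta\mathbb{J}_kA_0)\oplus\exp(t\eta\mathbb{J}_{n-k}A_1)$ and repeats the reduction for $\Sigma_0$. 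You instead split $\xi$ itself as $\xi_0\oplus T^*\R^{n-k}$. This is legitimate here, but for a slightly sharper reason than the one you cite: Remark \ref{H2} only gives that $H_1$ vanishes on the zero-section, which controls $dH_1$ along $T\chi_0$ but not transversally; what saves you is that the orbit sits at the \emph{origin} of the $T^*\R^{n-k}$-factor, where $dH_1=A_1\cdot 0=0$ and $\lambda_{n-k}=\frac{1}{2}(p\,dq-q\,dp)=0$ because $H_1$ is a homogeneous quadratic and both fibre coordinates vanish. Granting that, your splitting is symplectic and preserved by the block-diagonal linearised flow, so additivity applies. Your version is shorter (no detour through $T\R^{2n}$ and no separate verification that $\CZ^{\xi^{\omega}}=0$), while the paper's version is more robust in that the $\xi\oplus\xi^{\omega}$ step works at any contact-type level set without using the special position of the orbit. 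The final crossing-form computation is identical in both arguments, and your caveat about interpreting $\tCZ$ in the Robbin--Salamon (Morse--Bott) sense is exactly the right one.
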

Here $\tCZ$ denotes the transverse Conley-Zehnder index, which we will denote in the following proof by $\CZ^\xi$, as it is only evaluated on the contact structure $\xi$.

\begin{proof}
Let $H$ be a quadratic Hamiltonian as in \eqref{quadHam} and $H^{-1}(0)=\Sigma$.
The radial Liouville vector field $Y=\frac{1}{2}x\partial_x$ defines a contact structure $\xi=\ker\left( \iota_{Y}\omega\big|_{T\Sigma}\right)$ on $\Sigma$ by Remark \ref{rem:Lioville}. Writing $L^\omega$ for the symplectic complement of a subspace $L$, we have for $x \in \Sigma $:
\begin{align*}
\xi_x & =\left\lbrace v\in T_x\Sigma\, |\, \omega_n(Y, v)=0 \right\rbrace\\
& = \left\lbrace v\in T_xT^\ast\R^n\, |\, \omega_n(Y, v)=0, dH(v)=0 \right\rbrace
 = \left( \operatorname{span}\{ X_H,Y\}\right)^{\omega}.
\end{align*}
Consequently, $\xi^{\omega}=\operatorname{span}\{ X_H,Y\}$. If $\mathcal{L}_{X_H}$ denotes the Lie-derivative along $X_H$, we obtain by Cartan's formula and Remark \ref{rem:Lioville} that
\[
 \mathcal{L}_{X_H} \iota_Y \omega = d(\iota_{X_H} \iota_Y \omega_n)+\iota_{X_H} d(\iota_Y \omega_n) =d(dH(Y))+\iota_{X_H}\omega_n = d(H{+}1)-dH=0.
\]
Consequently, the Hamiltonian flow $\phi^t$ of $X_H$ preserves $\iota_Y \omega_n$. As $\phi^t$ preserves also $dH$ and $\omega_n$, we find that the vector fields $X_H, Y$, the bundles $\xi, \xi^\omega$ and  the splitting $T(T^*\R^n)=T\mathbb{R}^{2n}=\xi\oplus\xi^\omega$ over $\Sigma$ are also preserved by $\phi^t$.  

Let $\gamma$ be a closed characteristic on $\Sigma$. Then we have 
\[
\gamma^*T\mathbb{R}^{2n}=\gamma^*\xi \oplus \gamma^*\xi^{\omega}.
\]
As the Hamiltonian flow preserves this splitting, it follows by the product property \cite{Robbin1993} that
the Conley-Zehnder index of $\gamma$ is the sum:
\[
\CZ^{T\mathbb{R}^{2n}}(\gamma)=\CZ^{\xi}(\gamma)+\CZ^{\xi^{\omega}}(\gamma).
\]
As $(X_H,Y)\circ \gamma$ provides a trivialisation of $\gamma^*\xi^{\omega}$ and as $\phi^t$ preserves $(X_H,Y)$, we find that $D\phi^t|_{\xi^\omega}$ expressed in this trivialisation is a path of identity matrices and hence
\[
\CZ^{\xi^{\omega}}(\gamma)=\CZ(\operatorname{Id})=0\qquad \Longrightarrow\qquad \CZ^\xi(\gamma)=\CZ^{T\R^{2n}}(\gamma).
\]
The Hamiltonian flow of a Hamiltonian as in \eqref{quadHam} is given by
\[
\phi^t (x)=\exp(t \mathbb{J}_n A)x\quad\textrm{and}\quad D\phi^t=\exp(t \mathbb{J}_n A).
\]
For a closed characteristic $\gamma$ with period $\eta$ we can take $\Phi(t)=\operatorname{Id}$ for all $t$ as the trivialization $\Phi\colon [0,\eta]\times \R^{2n}\to \gamma^* (T\R^{2n})$. With respect to this trivialization 
\[
\CZ^\xi(\gamma)=\CZ^{T\R^{2n}}(\gamma)=\CZ(\exp(t \mathbb{J}_n A)).
\]
Naturally, $\mathbb{R}^{2n}=\mathbb{R}^{2k}\oplus\mathbb{R}^{2(n-k)}$ is a splitting into symplectic subspaces. By assumption, the matrix $A$ can be written as a block matrix with respect to this splitting:
\[
A\coloneqq \left(\begin{array}{c c}
A_0 & 0\\
0 & A_1
\end{array}
\right),
\]
where $A_0$ and $A_1$ are as in \eqref{defH}. This way the Conley-Zehnder index of $\gamma$ is a sum:
\[
\CZ^\xi(\gamma)=\CZ \big(\! \exp(t \mathbb{J}_k A_0) \big)+\CZ \big(\! \exp(t \mathbb{J}_{n-k} A_1) \big).
\]
By Lemma \ref{lem:orbits}, $\gamma$ is also a characteristic on $\Sigma_0\coloneqq H_0^{-1}(0)$. Repeating the arguments presented above yields that $\xi_0\coloneqq \ker(\iota_Y\omega_k)$ is a contact structure on $\Sigma_0$ and that
\[
\CZ^{\xi_0}(\gamma)=\CZ\big(\!\exp(t \mathbb{J}_k A_0) \big).
\]
Thus it suffices to show that $\CZ\big(\! \exp(t \mathbb{J}_{n-k} A_1)\big)=0$. The Conley-Zehnder index of the path of matrices $\exp(t \mathbb{J}_{n-k} A_1), t\in[0, \eta]$ is by definition
\begin{gather}
\CZ \big(\! \exp(t \mathbb{J}_{n-k} A_1) \big) =\frac{1}{2}\operatorname{sgn}(A_1)+\frac{1}{2}\operatorname{sgn}\!\left(A_1\Big|_{\ker \! \big(\! \! \exp(\eta \mathbb{J}_{n-k} A_1)-\operatorname{Id}\! \big)}\right)\nonumber\\
+\sum_{\substack{t\in(0,\eta) ,\\ \det(\exp(t\mathbb{J}_{n-k}A)-\operatorname{Id})=0}}\operatorname{sgn}\!\left(A_1\Big|_{\ker \! \big( \! \exp(t\mathbb{J}_{n-k}A_1)-\operatorname{Id} \! \big)}\right)\label{CZwzor}
\end{gather}
First observe that the first term vanishes, since by assumption $A_1$ has signature $0$.
To analyse the rest of the terms, we will calculate the \emph{crossings}, i.e. those $t\in \R$, such that $\det \! \big(\! \exp(t\mathbb{J}_{n-k}A_1)-\operatorname{Id} \! \big)=0$. A simple calculation shows that $t$ is a crossing of $\exp(t\mathbb{J}_{n-k}A_1)$ if and only if $e^{t\mu}=1$ for some eigenvalue $\mu$ of $\mathbb{J}_{n-k}A_1$. However, by assumption all the eigenvalues of $\mathbb{J}_{n-k}A_1$ have non-zero real parts, hence $e^{t\mu}\neq 1$ for all $t\neq 0$. Therefore, $\exp(t\mathbb{J}_{n-k}A_1)$ has no other crossings than $0$ and all terms in \eqref{CZwzor} vanish giving $\CZ\big(\! \exp(t \mathbb{J}_{n-k} A_1) \big)=0$.
\end{proof}
\subsection{Morse-Bott property}\label{sec:Morse-Bott}
In this section we show that the Morse-Bott conditions \eqref{eq:MorseBott} and \eqref{def:MBT} are satisfied for Hamiltonians $H$ and $H_0$ as in \eqref{defH}.
\begin{lem}
For Hamiltonians $H_0$ and $H$ on $(T^\ast\R^k,\omega_k)$ or $(T^\ast\R^n,\omega_n)$ as in \eqref{defH} the periodic orbits of $X_{H_0}$ and $X_H$ are of Morse-Bott type and
the Rabinowitz action functionals $\cA^{H_0}$ and $\mathcal{A}^H$ are Morse-Bott.
\end{lem}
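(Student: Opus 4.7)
The plan is to establish the two Morse-Bott assertions separately. The first---that the periodic orbits of $X_H$ and $X_{H_0}$ are of Morse-Bott type---contains the only real computation; the second then follows essentially for free by invoking the defining-Hamiltonian criterion \cite[Lem.\ 20]{Fauck2016}.

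For the Morse-Bott type property of the orbits, I would verify the three items of the definition: (a) $\eta$ is constant on each connected component of $\Crit(\mathcal{A}^H)$; (b) the image $\mathcal{N}^\eta$ is a closed submanifold of $\Sigma$; (c) the identity $T_p\mathcal{N}^\eta = \ker(D_p\phi^\eta - \operatorname{Id}) \cap T_p\Sigma$. Item (a) is immediate from \eqref{action}, which gives $\mathcal{A}^H(v,\eta) = \eta$. For (b), Remark \ref{rem:Sk} already describes $\mathcal{N}^\eta$: it equals $\Sigma$ when $\eta=0$, and for $\eta\neq 0$ it is the sphere $\sigma^{-1}((\Sigma_0\cap V_\eta)\times\{0\})\subseteq \Sigma$, where I write $V_\eta := \ker(\exp(\eta\mathbb{J}_k A_0)-\operatorname{Id})$.

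The main computation is (c). Since the flow is linear, $D_p\phi^\eta = \exp(\eta\mathbb{J}_n A)$, and the block decomposition of $A$ dictated by \eqref{defH} yields
\[
\ker(\exp(\eta\mathbb{J}_n A)-\operatorname{Id}) = V_\eta \oplus \ker(\exp(\eta\mathbb{J}_{n-k}A_1)-\operatorname{Id}).
\]
For $\eta\neq 0$, the argument in the proof of Lemma \ref{lem:orbits} shows that the second summand vanishes, so the entire kernel sits inside $T^*\R^k\times\{0\}$. Intersecting with $T_p\Sigma$ then pins down exactly the tangent directions along $\Sigma_0\cap V_\eta$, which is $T_p\mathcal{N}^\eta$. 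For $\eta=0$ the identity is trivial: $D_p\phi^0 = \operatorname{Id}$ forces the right-hand side to be $T_p\Sigma = T_p\mathcal{N}^0$. The corresponding case of $H_0$ on $\Sigma_0$ is subsumed in the same argument by taking $n=k$, which kills the second summand a priori.

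For the Rabinowitz action functional, a direct computation gives $dH_x(Y) = x^T A Y = H(x)+1$ for the radial Liouville vector field $Y = \frac{1}{2}x\partial_x$, so $dH(Y)|_\Sigma \equiv 1$, meaning $H$ is defining for $\Sigma$. Combined with the Morse-Bott property of the orbits just verified, \cite[Lem.\ 20]{Fauck2016} gives at once that $\mathcal{A}^H$ is Morse-Bott; the same reasoning applies to $H_0$ on $\Sigma_0$. The only step requiring genuine content is the block-wise verification of (c), and even that is essentially dictated by the linear algebra already carried out in Lemma \ref{lem:orbits}.
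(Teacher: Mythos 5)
Your proposal is correct and follows essentially the same route as the paper: both arguments reduce the Morse-Bott type condition to the linear-algebraic description $\mathcal{N}^\eta=\Sigma\cap\ker\!\big(\exp(\eta\mathbb{J}_nA)-\operatorname{Id}\big)$ coming from \eqref{CritEta}, and both then deduce the Morse-Bott property of $\cA^H$ from the defining property $dH(Y)|_\Sigma\equiv 1$ via \cite[Lem.\ 20]{Fauck2016}. The only cosmetic difference is that you spell out the block decomposition and the vanishing of the $A_1$-summand explicitly, which the paper leaves implicit by citing Lemma \ref{lem:orbits}.
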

\begin{proof}
We only give the proof for $H$, the proof for $H_0$ is analogous.

First, we show that $\Crit(\cA^H)$ is a discrete union of connected manifolds $\Lambda$. Let $A=\big(\begin{smallmatrix}A_0 &\\ & A_1\end{smallmatrix}\big)$ be the block matrix such that $H(x,y)=\big(\begin{smallmatrix}x\\y\end{smallmatrix}\big)^TA\big(\begin{smallmatrix}x\\y\end{smallmatrix}\big)-1$. Let $\mu_j$ be the eigenvalues of the matrix $\mathbb{J}_kA_0$ and let $\Lambda^\eta$ be the set of pairs $(v,\eta)$ such that $v$ is an $\eta$-periodic orbit of $X_H$ (see \eqref{LambdaEta}). Recall that $\Lambda^\eta$ is diffeomorphic to a sphere $S^{2m-1}$ or $\Sigma$ and that we have by Corollary \ref{cor:CritVal} and Remark \ref{rem:orbits} that
$$
\Crit\left(\cA^H\right)= \bigcup_{\eta \in \bigcup_{j=1}^k\frac{2\pi}{\mu_j}\Z} \Lambda^\eta.
$$
Next, recall from Remark \ref{rem:Lioville} that quadratic Hamiltonians $H$ are defining for the hypersurface $\Sigma=H^{-1}(0)$, as the Liouville vector field $Y=\frac{1}{2}x\partial_x$ satisfies $dH(Y)|_\Sigma=1$. This allows us to apply \cite[Lem.\ 20]{Fauck2016} and conclude that $\cA^H$ is Morse-Bott if the periodic orbits of $X_H$ are of Morse-Bott type.

To prove this last property consider the projection $(v,\eta)\mapsto v(0)$ and for $\eta\in \Crit(\cA^H)$ denote by $\mathcal{N}^\eta$ the image of $\Lambda^\eta$ under this projection. It suffices to show for all $\eta$ that $\mathcal{N}^\eta\subseteq\Sigma$ is a closed submanifold and that for $p\in\mathcal{N}^\eta$ it holds that
\[T_p\mathcal{N}^\eta=\ker\big(d_p H\big)\cap\ker \! \big(D_p\phi^\eta-\operatorname{Id} \! \big),\]
where $\phi^\eta$ denotes the time $\eta$ flow of $X_H$. Both conditions follow from \eqref{CritEta}:
\begin{align*}
\mathcal{N}^\eta =\pi_{\mathcal{A}^H}(\Lambda^\eta) &= \equalto{\Sigma}{H^{-1}(0)}\cap\ker\big(\underbrace{\exp(\eta \mathbb{J}_nA)}_{=D\phi^\eta}-\operatorname{Id} \! \big)\\
\Longrightarrow\qquad T_p\mathcal{N}^\eta &=\ker (d_pH)\cap\ker \! \big( D_p\phi^\eta-\operatorname{Id} \! \big).\qedhere
\end{align*}
\end{proof}
\section{The hybrid problem}
In the previous section we have shown that there is a $1$-to-$1$, action and degree preserving correspondence between periodic orbits of $X_H$ on $\Sigma$ and of $X_{H_0}$ on $\Sigma_0$, which will allows us to define a chain map 
\[
\psi \colon CF_*(H,f)\to CF_*(H_0,f_0).
\]
However, to ensure that $\psi$ induces a homomorphism
\[
\Psi \colon RFH_\ast(H)\to RFH_\ast(H_0),
\]
we need to verify that it commutes with the boundary operators, which is the subject of this section.

 Let $\Lambda_0\subseteq \Crit(\mathcal{A}^{H_0})$ and $\Lambda\subseteq \Crit(\mathcal{A}^{H})$ be connected components and fix $J_0\in \scrJ_\star^k$ and $J \in \scrJ_\star^n$, such that the pairs $(H_0,J_0)$ and $(H,J)$ are regular in the sense of Definition \ref{def:regular}. To construct $\psi$, we consider the following moduli spaces of pairs of half Floer trajectories of the Hamiltonians $H$ and $H_0$:

\begin{definition} An element of $ \mathcal{M}_{\operatorname{hyb}}(\Lambda_0 , \Lambda)$ is a pair $(u_0,u)$, such that ${u_0 = (v_0, \eta_0)}$ and ${u = (v, \eta)}$ with
\begin{align*}
v_0 & \colon (- \infty ,0] \times S^1 \to T^*\R^k,  && \eta_0 \colon (- \infty ,0] \to \mathbb{R},\\
v & \colon [0, \infty) \times S^1 \to T^*\R^n,  && \eta \colon [0, \infty ) \to \mathbb{R},
\end{align*}
satisfying the Rabinowitz Floer equations
\begin{equation}\label{Floer}
\partial_s u_0 - \nabla_{J_0} \mathcal{A}^{H_0}(u_0) = 0,\qquad
\partial_s u - \nabla_{J} \mathcal{A}^{H}(u) = 0,
\end{equation}
together with the limit conditions
\begin{equation}
  \lim_{s\to -\infty}u_0(s)\in \Lambda_0, \qquad \lim_{s\to +\infty}u(s)\in \Lambda, \label{limit1}
\end{equation} 
and the coupling conditions
\begin{equation}
 \sigma ( v(0,t)) = \big(v_0(0,t),(*,0_{n-k}) \big), \qquad \eta(0) = \eta_0(0), \label{coupling}
\end{equation}
where $\sigma$ is the symplectomorphism defined in \eqref{splitting}.
\end{definition}
Below we will analyse the moduli space $\mathcal{M}_{\operatorname{hyb}}(\Lambda_0 , \Lambda)$. In subsection \ref{ssec:stationary} we explore what happens when $\mathcal{A}^{H}(\Lambda)=\mathcal{A}^{H_0}(\Lambda_0)$; in subsection \ref{ssec:bounds} we prove uniform $L^\infty$-bounds on the elements of $\mathcal{M}_{\operatorname{hyb}}(\Lambda_0 , \Lambda)$ and in subsection \ref{ssec:index} we show that $\mathcal{M}_{\operatorname{hyb}}(\Lambda_0 , \Lambda)$ has the structure of a smooth manifold.

\subsection{Stationary solutions}\label{ssec:stationary}
 For $(u_0,u)\in  \mathcal{M}_{\operatorname{hyb}}(\Lambda_0 , \Lambda)$ we define its energy as
\begin{equation}\label{rem:Enabla}
E(u_0,u)\coloneqq \int_{-\infty}^0\|\partial_su_0\|^2ds+\int_0^{\infty}\|\partial_su\|^2ds\geq0.
\end{equation}
\begin{lem} \label{energy}
For $(u_0,u)\in  \mathcal{M}_{\operatorname{hyb}}(\Lambda_0 , \Lambda)$, its energy satisfies
\[E(u_0,u)=\mathcal{A}^{H}(\Lambda)-\mathcal{A}^{H_0}(\Lambda_0).\]
In particular, if $\mathcal{M}_{\operatorname{hyb}}(\Lambda_0 , \Lambda)\neq \emptyset$, then $\mathcal{A}^{H}(\Lambda)\geq\mathcal{A}^{H_0}(\Lambda_0)$.
\end{lem}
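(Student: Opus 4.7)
The plan is to exploit the fact that the Rabinowitz Floer equations \eqref{Floer} are positive $L^2$ gradient equations, so each half-trajectory contributes exactly the difference of its action endpoints to the total energy, and then to show that the boundary contributions at $s=0$ cancel thanks to the coupling conditions \eqref{coupling} together with Remark \ref{H2}\ref{DefnS}.

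First, for each half-trajectory I will apply the fundamental theorem of calculus to $s \mapsto \mathcal{A}^{H_0}(u_0(s))$ and $s \mapsto \mathcal{A}^{H}(u(s))$, using that along a solution of $\partial_s u = \nabla_J \mathcal{A}(u)$ one has $\frac{d}{ds}\mathcal{A}(u(s)) = \langle \nabla_J \mathcal{A}(u),\partial_s u\rangle_J = \|\partial_s u\|^2_J$. Combined with the limits \eqref{limit1} at $s = \pm\infty$, this gives
\begin{align*}
\int_{-\infty}^0 \|\partial_s u_0\|^2\,ds &= \mathcal{A}^{H_0}(u_0(0)) - \mathcal{A}^{H_0}(\Lambda_0),\\
\int_{0}^{+\infty} \|\partial_s u\|^2\,ds &= \mathcal{A}^{H}(\Lambda) - \mathcal{A}^{H}(u(0)).
\end{align*}
Adding these two identities yields
\[
E(u_0,u) = \mathcal{A}^H(\Lambda) - \mathcal{A}^{H_0}(\Lambda_0) + \bigl(\mathcal{A}^{H_0}(u_0(0)) - \mathcal{A}^H(u(0))\bigr),
\]
so everything reduces to showing that the bracketed term vanishes.

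For the boundary term, I will plug the coupling condition \eqref{coupling} directly into the definition of $\mathcal{A}^H$. Writing $v(0,t)$ in split coordinates as $\sigma(v(0,t)) = (v_0(0,t),(q''(t),0))$ for some $q''(t) \in \R^{n-k}$, the pullback $v(0,\cdot)^*\lambda_n$ decomposes, via the compatibility of $\lambda_n$ with the symplectic splitting \eqref{splitting}, into the sum of $v_0(0,\cdot)^*\lambda_k$ and a piece contributed by the $(q'',0)$-part; but the latter vanishes identically because its $p''$-component is zero. For the Hamiltonian term, $H(v(0,t)) = H_0(v_0(0,t)) + H_1(q''(t),0)$, and by Remark \ref{H2}\ref{DefnS} the function $H_1$ is identically zero on the zero-section $\chi_0 = \R^{n-k}\times\{0\}$, so $H_1(q''(t),0) = 0$. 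Together with $\eta(0) = \eta_0(0)$, this gives $\mathcal{A}^H(u(0)) = \mathcal{A}^{H_0}(u_0(0))$, completing the energy identity. The nonnegativity statement then follows from $E(u_0,u) \ge 0$, which is built into the definition \eqref{rem:Enabla}.

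The only step requiring some care is the cancellation of the boundary terms: one must verify both that the Liouville form splits correctly under $\sigma$ and that $H_1$ vanishes on the Lagrangian $\chi_0$. Both are pinned down by our setup—the first by the symplectic product structure of \eqref{splitting} and the particular primitive $\lambda_n = \iota_Y\omega_n$ with radial $Y$, the second by the normalization recorded in Remark \ref{H2}\ref{DefnS}. No compactness or transversality input is needed; this is purely a pointwise computation at $s=0$ combined with the gradient-flow identity.
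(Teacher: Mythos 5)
Your proposal is correct and follows essentially the same route as the paper: the gradient-flow/FTC identity reduces everything to the boundary term at $s=0$, which cancels because the primitive of $\omega_{n-k}$ vanishes on the zero-section $\chi_0$ and $H_1|_{\chi_0}=0$ by Remark \ref{H2}\ref{DefnS}. No gaps.
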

\begin{proof}
At first, we have by \eqref{Floer} and \eqref{limit1} that
\begin{align*}
E(u_0,u) &= \int_{-\infty}^0\|\nabla_{J_0} \cA^{H_0}(u_0(s))\|^2ds+\int_0^{\infty}\|\nabla_J \cA^{H}(u(s))\|^2ds\\
&=\int_{-\infty}^0 \frac{d}{ds}\cA^{H_0}(u_0(s)) ds + \int_0^{\infty}\frac{d}{ds}\cA^{H}(u(s)) ds\\
&=\mathcal{A}^{H_0}(u_0(0))-\mathcal{A}^{H_0}(\Lambda_0)+\mathcal{A}^{H}(\Lambda)-\mathcal{A}^{H}(u(0)).
\end{align*}
Secondly, we have for $(v_1,\eta)\in C^{\infty}(S^1,\chi_0)\times \mathbb{R}$ that
\[
\mathcal{A}^{H_1}(v_1,\eta)= \int_{S^1} \lambda(\partial_t v_1)-\eta \int_{S^1} H_1(v_1)=0.
\]
The first integral vanishes, as the primitive $\lambda=pdq$ of $\omega_{n-k}$ vanishes on the $0$-section $\chi_0\coloneqq \R^{n-k}\times\{0\}\subseteq T^*\R^{n-k}$. The second integral vanishes, as $H_1$ vanishes on $\chi_0$ by \ref{DefnS} of Remark \ref{H2}. Using \eqref{coupling}, we have $u(0)=(\sigma^{-1}(v_0(0),v_1(0)),\eta)$ and $u_0(0)=(v_0(0),\eta)$ with $v_1(0)\in C^\infty(S^1,\chi_0)$, and therefore we find
\[
\mathcal{A}^{H}(u(0)) = \mathcal{A}^{H_0}(u_0(0))\qquad\Longrightarrow\qquad E(u_0,u)=\mathcal{A}^{H}(\Lambda)-\mathcal{A}^{H_0}(\Lambda_0).\qedhere
\]
\end{proof}
We formulate the next lemma using the notation from \eqref{LambdaEta}:
\begin{lem}\label{lem:stationary}
For $\eta \in \operatorname{CritVal}(\cA^{H_0})= \operatorname{CritVal}(\cA^H)$, $\eta\neq 0$ the moduli space $\mathcal{M}_{\operatorname{hyb}}(\Lambda_0^\eta , \Lambda^\eta)$ consists of stationary solutions, of the form
\begin{equation*}\begin{aligned}
u_0(s,t) &\coloneqq  (v(t), \eta), && \forall\ (s,t)\in (-\infty, 0]\times S^1,\\
u(s,t) & \coloneqq  (\sigma^{-1}(v(t), 0), \eta), && \forall\ (s,t)\in [0,+\infty)\times S^1,
\end{aligned} \qquad  \text{where }(v,\eta)\in \Lambda_0^\eta.
\end{equation*}
For $\eta=0$, $\mathcal{M}_{\operatorname{hyb}}(\Lambda_0^0 , \Lambda^0)$ consists of stationary solutions of the form
\begin{equation*}\begin{aligned}
u_0(s,t) & \coloneqq  (x, 0), && \forall\ (s,t)\in (-\infty, 0]\times S^1,\\
u(s,t) & \coloneqq  (\sigma^{-1}(x,y), 0), && \forall\ (s,t)\in [0,+\infty)\times S^1,
\end{aligned} \qquad  \text{where } (x,y)\in \Sigma_0 \times \chi_0.
\end{equation*}
\end{lem}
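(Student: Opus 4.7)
The plan is to reduce the classification of $\mathcal{M}_{\operatorname{hyb}}(\Lambda_0^\eta,\Lambda^\eta)$ to the identification of its stationary elements via the energy identity of Lemma \ref{energy}. By Lemma \ref{lem:orbits} (or directly from \eqref{action}) we have $\mathcal{A}^H(\Lambda^\eta)=\mathcal{A}^{H_0}(\Lambda_0^\eta)=\eta$ when $\eta\neq 0$, and both actions vanish when $\eta=0$. In every case Lemma \ref{energy} therefore forces $E(u_0,u)=0$. Since $E(u_0,u)$ is an integral of the nonnegative integrand $\|\partial_su_0\|^2+\|\partial_su\|^2$, both $\partial_s u_0$ and $\partial_s u$ must vanish identically, so each of $u_0$ and $u$ is $s$-independent. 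Thus $u_0(s,\cdot)\in \Lambda_0^\eta$ and $u(s,\cdot)\in \Lambda^\eta$ for all $s$, i.e.\ $(u_0,u)$ is a pair of constant trajectories at critical points with the prescribed asymptotic values.

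For $\eta\neq 0$, I would now invoke Lemma \ref{lem:orbits}: writing $u(s,\cdot)\equiv(v,\eta)\in \Lambda^\eta$ and $u_0(s,\cdot)\equiv(v_0,\eta)\in\Lambda_0^\eta$, the Lemma gives $\sigma(v)=(w,0)$ for a unique $(w,\eta)\in\Lambda_0^\eta$. The coupling condition \eqref{coupling} at $s=0$ then reads $(w(t),0)=\sigma(v(0,t))=(v_0(0,t),(*,0_{n-k}))$, which simultaneously forces $w=v_0$ and pins the free parameter $*$ to zero. This recovers exactly the family indexed by $(v,\eta)\in\Lambda_0^\eta$ described in the statement; conversely each such pair obviously satisfies \eqref{Floer}, \eqref{limit1} and \eqref{coupling}.

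For $\eta=0$, the components $\Lambda_0^0=\Sigma_0\times\{0\}$ and $\Lambda^0=\Sigma\times\{0\}$ consist of constant loops, so the stationary solutions have the form $u_0\equiv(x,0)$ with $x\in\Sigma_0$ and $u\equiv(z,0)$ with $z\in\Sigma$. The coupling condition becomes $\sigma(z)=(x,y)$ with $y\in\chi_0$, and Remark \ref{H2}\ref{DefnS} guarantees $\sigma^{-1}(\Sigma_0\times\chi_0)\subseteq\Sigma$, so that every choice of $(x,y)\in\Sigma_0\times\chi_0$ indeed yields an admissible pair. The only subtle point in the whole argument is the interplay between the Lagrangian coupling condition and Lemma \ref{lem:orbits}: for $\eta\neq 0$ the periodic orbit structure already fixes $*=0$, so the coupling imposes no extra freedom, whereas for $\eta=0$ there is no such rigidity and the free parameter $*$ is exactly what produces the extra $\chi_0$-factor in the moduli space. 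Beyond this bookkeeping the argument is a direct consequence of the energy identity and the classification of critical points already established.
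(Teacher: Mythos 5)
Your proposal is correct and follows essentially the same route as the paper: the energy identity of Lemma \ref{energy} forces $E(u_0,u)=0$, hence stationarity, and then Lemma \ref{lem:orbits} (for $\eta\neq 0$) respectively the inclusion $\sigma^{-1}(\Sigma_0\times\chi_0)\subseteq\Sigma$ from Remark \ref{H2} (for $\eta=0$) combined with the coupling condition \eqref{coupling} identifies the stationary solutions exactly as claimed.
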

\begin{proof}
If $\mathcal{A}^{H}(\Lambda)=\mathcal{A}^{H_0}(\Lambda_0)$, then $E(u_0,u)=0$ by Lemma \ref{energy} for every $(u_0,u)\in\mathcal{M}_{\operatorname{hyb}}(\Lambda_0 , \Lambda)$. As a result $\mathcal{M}_{\operatorname{hyb}}(\Lambda_0 , \Lambda)$ consists of stationary solutions of the form
\begin{align*}
u_0(s,t) & \coloneqq  (v_0(t), \eta), && \forall\ (s,t)\in (-\infty, 0]\times S^1,\\
u(s,t) & \coloneqq  (\sigma^{-1}(v_0(t), v_1(t)), \eta), && \forall\ (s,t)\in [0,+\infty)\times S^1,
\end{align*}
where by \eqref{limit1} and \eqref{coupling} it holds
\[
(v_0,\eta) \in \Lambda_0, \qquad (\sigma^{-1}(v_0,v_1),\eta)\in \Lambda, \qquad v_1(t)\in \chi_0 \qquad \forall\ t\in S^1.
\]
On the other hand, by Lemma \ref{lem:orbits} we know that if $\mathcal{A}^{H}(\Lambda)=\mathcal{A}^{H_0}(\Lambda_0) \neq 0$, then
\[
\Lambda= \left\lbrace (\sigma^{-1}(v_0,0),\eta)\ \big|\ (v_0,\eta)\in \Lambda_0\right\rbrace.
\]
Combining the two facts above proves the first claim. Similarly, we obtain the second claim by observing that $\Sigma_0\times \chi_0\subseteq \Sigma_0\times H^{-1}_1(0)\subseteq \sigma(\Sigma)$.
\end{proof}
\subsection{Bounds}\label{ssec:bounds}

One of the crucial steps in constructing the homomorphism between $RFH(H)$ and $RFH(H_0)$ is to establish $L^\infty$-bounds on $\mathcal{M}_{\operatorname{hyb}}(\Lambda_0,\Lambda)$.

To formulate the $L^\infty$-bounds in Proposition \ref{prop:hybBound} and Lemma \ref{lem:compact} we introduce the following notation: for a compact subset $N\subseteq \Sigma$ and a connected component $\Lambda_{0} \subseteq \Crit(\cA^{H_{0}})$ we denote 
\begin{align*}
\mathcal{C}({\cA^{H}},N) & \coloneqq  \left\lbrace x\in \Crit(\mathcal{A}^H)\ \Big|\ |\mathcal{A}^H(x)|>0 \quad \textrm{or}\quad x\in N\times\{0\}\right\rbrace,
\label{eqn:CAHY}\\
\mathcal{N}_{\operatorname{hyb}}(\Lambda_0, N) & \coloneqq \left\lbrace (u_0,u) \in \mathcal{M}_{\operatorname{hyb}}(\Lambda_0,\Sigma{\times}\{0\})\ \Big|\ \lim_{s\to +\infty}u(s)\in N \right\rbrace,
\end{align*}
For a pair of connected components $(\Lambda_{0},\Lambda) \subseteq \Crit(\cA^{H_{0}}) \times\left(\Crit(\cA^H)\big\backslash (\Sigma{\times}\{0\})\right)$ we denote $\mathcal{N}_{\operatorname{hyb}}(\Lambda_0, \Lambda)\coloneqq \mathcal{M}_{\operatorname{hyb}}(\Lambda_0, \Lambda)$.

\begin{prop}\label{prop:hybBound}
Consider a compact subset $N\subseteq \Sigma$ and a pair of connected components $(\Lambda_{0},\Lambda) \subseteq \Crit(\cA^{H_{0}}) \times\mathcal{C}(\cA^{H},N)$,
such that $a \leq \cA^{H_{0}}(\Lambda_0)\leq\cA^{H}(\Lambda)\leq b$. 
Then the corresponding moduli space  $\mathcal{N}_{\operatorname{hyb}}(\Lambda_0, \Lambda)$ admits uniform $L^\infty$-bounds, which depend only on $a,b$ and $N$.
\end{prop}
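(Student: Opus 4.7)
The plan is to bootstrap from the energy identity of Lemma \ref{energy} to an $L^\infty$ bound on the Lagrange multipliers $\eta,\eta_0$, and then to an $L^\infty$ bound on the loops $v,v_0$, following the compactness strategy of \cite{pasquotto2017,pasquotto2018} but adapted to the hybrid setting. First I would extend Lemma \ref{energy} to cover the case $\Lambda = \Sigma\times\{0\}$ (for which $\cA^H(\Lambda)=0$): the computation there goes through verbatim, yielding $E(u_0,u) = \cA^H(\Lambda)-\cA^{H_0}(\Lambda_0) \le b-a$. Combined with the monotonicity of the action along Floer trajectories and the identity $\cA^{H_0}(u_0(0))=\cA^H(u(0))$ established inside the proof of Lemma \ref{energy}, this confines both $\cA^{H_0}(u_0(s))$ and $\cA^H(u(s))$ to the fixed compact interval $[a-(b-a),b]$ along their respective half-cylinders, uniformly in the pair $(u_0,u)\in\mathcal{N}_{\operatorname{hyb}}(\Lambda_0,\Lambda)$.

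The next step is to bound the Lagrange multipliers. The identity $\cA^H(v,\eta) = \eta\int_{S^1}dH(Y)$ from \eqref{action}, combined with the contact-type condition $dH(Y)|_\Sigma > 0$ and the coercivity at infinity provided by axiom \ref{h1}, gives an $L^\infty$ control of $\eta$ whenever the loop $v$ is close to $\Sigma$. Propagating this estimate along the gradient flow via the tentacular argument of \cite[Prop.\ 3.3]{pasquotto2017}, suitably restricted to the half-cylinder $[0,\infty)\times S^1$ and using the action bound of the previous step at the $s=0$ endpoint, yields $\|\eta\|_{L^\infty}\le C_1(a,b)$. The compact-case analogue handles $\|\eta_0\|_{L^\infty}\le C_1(a,b)$ on $(-\infty,0]$, and the coupling $\eta(0)=\eta_0(0)$ is automatically consistent. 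With $\|\eta\|_{L^\infty}$ under control, the strongly tentacular axioms \ref{h1}--\ref{h3} together with the $L^2$ energy bound imply an $L^\infty$ bound on $v$, by invoking the loop precompactness argument of \cite{pasquotto2018}. For $v_0$ the compactness of $\Sigma_0\subseteq T^*\R^k$ together with classical elliptic bootstrap in the compact Rabinowitz Floer setting give $\|v_0\|_{L^\infty}\le C_2(a,b)$.

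The main obstacle is that the compactness results of \cite{pasquotto2017,pasquotto2018} apply to Floer trajectories on the full cylinder $\R\times S^1$, whereas here we work on half-cylinders glued along a Lagrangian-type coupling at $s=0$. Three observations make the adaptation work. First, the vanishing of $H_1$ on the zero-section $\chi_0$ (point \ref{DefnS} of Remark \ref{H2}) ensures that the energy identity of Lemma \ref{energy} has no boundary contribution. Second, the coupling $\sigma(v(0,t)) = (v_0(0,t),(\ast,0_{n-k}))$ is totally real with respect to $\mathbb{J}$, so standard elliptic regularity up to the boundary applies and the tentacular estimates transfer to the half-cylinder after a routine cut-off. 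Third, in the case $\Lambda = \Sigma\times\{0\}$ the asymptote at $s=+\infty$ is not pinned down by a single orbit (because $\Sigma$ is non-compact), which is precisely why the constraint $\lim_{s\to\infty}u(s)\in N$ is built into the definition of $\mathcal{N}_{\operatorname{hyb}}$; it prevents the loop from escaping along the ends of $\Sigma$ and thereby keeps all the estimates above depending only on $a$, $b$ and $N$.
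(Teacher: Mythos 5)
Your overall architecture ($L^2$/action bounds $\to$ multiplier bounds $\to$ $L^\infty$ bounds on the loops via the tentacular machinery of \cite{pasquotto2017,pasquotto2018}) matches the paper's, and your identification of the role of $N$ and of the vanishing of $H_1$ on $\chi_0$ in the energy identity is correct. But there is a genuine gap at the step you dismiss as ``the tentacular estimates transfer to the half-cylinder after a routine cut-off.'' The $L^\infty$ bound on $v$ in the non-compact target is \emph{not} a consequence of elliptic regularity up to a totally real boundary: elliptic regularity upgrades regularity from bounds you already have, whereas here the whole issue is to prevent the loop from escaping to infinity in $T^*\R^n$. The mechanism in \cite{pasquotto2017} is an Aleksandrov maximum principle applied to the plurisubharmonic function $r=F\circ v^-(s,t)+F\circ v^+(-s,t)$ with $F=\frac{1}{4}\|\cdot\|^2$ on the connected components of $\{r\ge c_0\}$ (whose width is controlled by the energy). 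On the half-cylinder such a component can touch the boundary $\{0\}\times S^1$, and the maximum principle for half-cylinders (\cite[Thm.\ 2.8]{Abbon2009}) requires the sign condition $\partial_s r\ge 0$ there. Verifying this is the genuinely new, hybrid-specific content of the proposition, and it is not automatic from the boundary condition being totally real: the paper shows $\partial_s r=0$ at $s=0$ by combining (i) the coupling condition forcing $\partial_s v_0(0)=\partial_s v^-(0)$ so that the $T^*\R^k$ contributions cancel, (ii) the identity $d^{\mathbb{C}}F|_{T\chi_0}=\frac{1}{2}(q\,dp-p\,dq)|_{T\chi_0}=0$, and (iii) the vanishing of $H_1$ on $\chi_0$, which kills the remaining $T^*\R^{n-k}$ term via $d^{\mathbb{C}}F(X_{H_1})=H_1$. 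Without this computation the maximum of $r$ could sit on the boundary and the argument collapses.

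A secondary imprecision: your claim that the action identity \eqref{action} plus axiom \ref{h1} yields $\|\eta\|_{L^\infty}\le C_1(a,b)$ ``whenever the loop is close to $\Sigma$'' and then ``propagates along the gradient flow'' is essentially the content of \cite[Prop.\ 3.3]{pasquotto2017}, but on the half-cylinder the relevant input at $s=0$ is not just the action value there; the paper instead invokes the adaptation of \cite[Prop.\ 6.2]{pasquotto2017}, which packages the $L^2(S^1)\times\R$ bound (hence the $\eta$ bound) for the glued trajectory using the coupling $\eta(0)=\eta_0(0)$ and the identity $\cA^{H_0}(u_0(0))=\cA^{H}(u(0))$. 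You should either carry out that adaptation or at least state why the coupling condition lets the two half-trajectories be treated as a single gradient-like trajectory for the purposes of the multiplier estimate.
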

\begin{proof}
First observe that by adapting the result in \cite[Prop.\ 6.2]{pasquotto2017} to the hybrid problem we obtain 
\begin{equation}
\sup\left\lbrace \|u^\pm(\pm s)\|_{L^2(S^1)\times\R}\ \Big|\  (u^-,u^+)\in\mathcal{N}_{\operatorname{hyb}}(\Lambda_0, \Lambda),\ s\geq 0\right\rbrace<+\infty.\label{L2bounds}
\end{equation}
Moreover, there exists $\varepsilon_0>0$, such that
\begin{align}
\sup\left\lbrace \begin{array}{c|c}
& (u^-,u^+)\in\mathcal{N}_{\operatorname{hyb}}(\Lambda_0, \Lambda),\ s\leq 0,\\
{\smash{\raisebox{.5\normalbaselineskip}{ $\|u^-(s)\|_{L^\infty\times\R}$}}} &
\|\nabla_{J_0} \mathcal{A}^{H_0}(u^-(s))\|< \varepsilon_0
\end{array}
\right\rbrace<+\infty,\label{EpsiBnds1}\\
\sup\left\lbrace \begin{array}{c|c}
& (u^-,u^+)\in\mathcal{N}_{\operatorname{hyb}}(\Lambda_0, \Lambda),\ s\geq 0,\\
{\smash{\raisebox{.5\normalbaselineskip}{ $\|u^+(s)\|_{L^\infty\times\R}$}}} &
\|\nabla_J \mathcal{A}^{H}(u^+(s))\|< \varepsilon_0
\end{array}
\right\rbrace<+\infty. \label{EpsiBnds2}
\end{align}
and all the bounds depend only on $a,b$ and $N$.

Next, we use a maximum principle argument to prove uniform bounds on the fragment of the Floer trajectories, where the action derivation is greater than $\varepsilon_0$.

For $(u^-,u^+)=((v^-,\eta^-),(v^+,\eta^+))\in\mathcal{N}_{\operatorname{hyb}}(\Lambda_0, \Lambda)$ define a function
\begin{align*}
 r & \hspace{2pt} \colon    (-\infty,0]\times S^1 \to \mathbb{R},\\
r (s,t) &  \coloneqq  \frac{1}{4}\left( \|v^-(s,t)\|^2+\|v^+(-s,t)\|^2\right).
\end{align*}
Using the function $F$ defined as $\frac{1}{4}\|\cdot\|^2$ we can rewrite $r$ as
\[
r(s,t)= F\circ v^-(s,t)+F \circ v^+(-s,t).
\]
The function $F$ is plurisubharmonic, which means that $-dd^\mathbb{C}F=\omega$.

By \eqref{EpsiBnds1} and \eqref{EpsiBnds2} we know that there exists $c_0>0$, such that for every $(s,t)\in r^{-1}([c_0,+\infty))$ we have
\begin{equation}
\|\nabla_{J_0} \cA^{H_0}(u^-(s))\| \ge \varepsilon_0, \qquad  \|\nabla_J \cA^{H}(u^+(-s))\| \geq \varepsilon_0.
\label{NablaBound}
\end{equation}
On the other hand, by Lemma \ref{energy} we have the following bound:
\[
b-a\geq E(u^-,u^+)=\int_{-\infty}^0\|\nabla_{J_0} \cA^{H_0}(u^-(s))\|^2ds+\int_0^{\infty}\|\nabla_J \cA^{H}(u^+(s))\|^2ds.
\]
Therefore, for every connected component $\Omega \subseteq r^{-1}([c_0,+\infty))$ there exist $s_0,s_1 \geq 0$, such that
\[
\Omega \subseteq [s_0,s_1]\times S^1\quad \textrm{and}\quad |s_1-s_0|\leq \frac{b-a}{2\varepsilon_0^2}.
\]
Since \eqref{L2bounds} and \eqref{NablaBound} are both satisfied, we can use \cite[Thm.\ 7.1]{pasquotto2017} and conclude that there exists a function $f \colon  (-\infty,0]\times S^1 \to \mathbb{R}$, such that $\triangle r \geq f$ and the $L^2$-norm of $f$ is bounded on $\Omega$ and the bound depends only on $a,b$ and $N$.

Now, if $\partial \Omega \cap (\{0\}{\times} S^1)=\emptyset$, then we can apply the Aleksandrov Maximum Principle \cite[Thm.\ 9.1]{gilbarg1998} to deduce that there exists $c_1>0$, such that
\begin{equation}
\sup_\Omega r \leq \sup_{\partial \Omega}r+c_1\|f\|_{L^{2}(\Omega)}=c_0+c_1\|f\|_{L^{2}(\Omega)}<+\infty, \label{limit}
\end{equation}
and the bound depends only on $a,b$ and $N$.

However, if $\partial \Omega \cap \big(\{0\}{\times} S^1\big)\neq\emptyset$ then we need to check an additional assumption. More precisely we would like to show that $\partial_sr \geq 0$ on $\partial \Omega \cap (\{0\}{\times} S^1)$ to be able to apply the Aleksandrov Maximum Principle for half cylinders \cite[Thm.\ 2.8]{Abbon2009} for \eqref{limit}.

Denote 
\[
\sigma(v^+)=: (v_0,v_1)\in C^\infty\left(\R\times S^1, T^*\R^k\right)\times C^\infty\left(\R\times S^1,T^*\R^{n-k}\right),
\]
where $\sigma$ is the splitting from \eqref{splitting}. Then (with a slight abuse of notation of $F$)
\begin{equation}\label{dRho}
\partial_sr= dF(\partial_s v^-)-dF(\partial_sv_0)-dF(\partial_sv_1).
\end{equation}
By the coupling condition \eqref{coupling} we have $\eta^-(0)=\eta^+(0)$ and $\partial_tv^-(0,t)=\partial_t v_0(0,t)$. Since $J_0\in \scrJ_\star^k$ and $ J\in \scrJ_\star^n$, we can assume without loss of generality that for $(s,t)\in r^{-1}([c_0,+\infty))$, one has
\[
J_0(u^-(s,t),t) \equiv \mathbb{J}_k, \qquad J(u^+(-s,t),t) \equiv \mathbb{J}_n.
\]
Therefore by \eqref{Floer} we have
\[
\partial_s v_0(0)=\nabla_{\mathbb{J}_k} \cA^{H_0}(v_0(0))=\nabla_{\mathbb{J}_k} \cA^{H_0}(v^-(0))=\partial_s v^-(0).
\]
As a result, the first and second term in \eqref{dRho} cancel each other.

To calculate the last term let us make the following observation:
\begin{align}
dF(\partial_s v_1(0)) & = dF\left(\nabla \cA^{H_1}(v_1(0))\right)\nonumber \\
& = - d^{\mathbb{C}} F\left(\partial_t v_1(0)-\eta^+(0)X_{H_1}(v_1(0))\right). \label{dFC}
\end{align}
By \eqref{coupling} we have that $v_1(0)$ lies in the $0$-section $\chi_0$ and 
$$
d^{\mathbb{C}}F\big|_{T\chi_0}=\frac{1}{2}\left(qdp -pdq\right)\big|_{T\chi_0}=0,
$$
thus the first term in \eqref{dFC} vanishes. For the second term of \eqref{dFC} observe that
\[
d^\mathbb{C}F(X_{H_1})=\langle \nabla H_1, \nabla F\rangle = H_1.
\]
As $H_1$ also vanishes on $\chi_0$, the whole third term in \eqref{dRho} vanishes and we conclude that $\partial_sr=0$ on $\{0\}\times S^1$. This allows us to apply the Aleksandrov Maximum Principle for half cylinders and prove \eqref{limit} also in case where $\partial \Omega \cap (\{0\}\times S^1)\neq\emptyset$.
\end{proof}
\subsection{The Index Computation}\label{ssec:index}
Fix two connected components of the respective critical sets $\Lambda_0 \subseteq \Crit (\mathcal{A}^{H_0})$ and $\Lambda \subseteq \operatorname{Crit}( \mathcal{A}^H)$. In this section we establish that the hybrid moduli problem is Fredholm, and compute the virtual dimension of the the moduli space $\mathcal{M}_{\operatorname{hyb}}(\Lambda_0 , \Lambda)$.

Let us introduce an anti-symplectic involution on $(T^*\R^k,\omega_k)$ by
\begin{equation}
\rho \colon  T^*\R^k \to T^*\R^k, \qquad (q,p)  \mapsto (q,-p). \label{rho}
\end{equation}
The standard symplectic structure $\mathbb{J}_k$ satisfies
\begin{equation}\label{JnRho}
- D\rho \circ \mathbb{J}_k \circ D\rho =\mathbb{J}_k.
\end{equation}
Therefore, we have the associated diffeomorphism:
\[
\mathcal{J}_\star^k \ni J \mapsto -D\rho \circ J \circ D\rho \in \mathcal{J}_\star^k.
\]
Finally, observe that $H_0\circ \rho = H_0$, which gives us
\begin{equation}
\label{XH0Rho}
X_{H_0}(x)= - D\rho[X_{H_0}(\rho (x))].
\end{equation}
As a result we obtain an automorphism of $\operatorname{Crit} (\mathcal{A}^{H_0})$ defined by:
\begin{equation}\label{Lambda0map}
\operatorname{Crit} (\mathcal{A}^{H_0}) \ni (v,\eta) \longmapsto (\rho \circ v, -\eta)\in \operatorname{Crit} (\mathcal{A}^{H_0}).
\end{equation}
Using the notation from \eqref{LambdaEta}, this automorphism maps $\Lambda_0^\eta$ to $\Lambda_0^{-\eta}$.
\begin{theorem}
\label{thm-index}
The hybrid moduli problem is Fredholm, and the space $ \mathcal{M}_{\operatorname{hyb}}(\Lambda_{0} , \Lambda)$ has virtual dimension 
\[
\operatorname{vir\,dim} \mathcal{M}_{\operatorname{hyb}}(\Lambda_{0} , \Lambda) =  \tCZ(\Lambda) - \tCZ(\Lambda_0) + \frac{1}{2} \left( \dim \Lambda_0 + \dim \Lambda\right).
\]
\end{theorem}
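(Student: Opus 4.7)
The strategy is to convert the hybrid problem into a Cauchy--Riemann-type boundary value problem on a single half-cylinder with a Lagrangian boundary condition, using the anti-symplectic involution $\rho$ from \eqref{rho} to reflect the left half-trajectory. Given $(u_0,u)=((v_0,\eta_0),(v,\eta))\in\mathcal{M}_{\operatorname{hyb}}(\Lambda_0,\Lambda)$, I define the reflected map
\[
\hat u_0(s,t)\coloneqq \bigl(\rho(v_0(-s,t)),\,-\eta_0(-s)\bigr),\qquad s\in[0,\infty).
\]
By \eqref{JnRho}, \eqref{XH0Rho} and $H_0\circ\rho=H_0$, $\hat u_0$ satisfies the Rabinowitz Floer equation for $H_0$ with respect to the $\rho$-conjugated almost complex structure $\hat J_0(x,\eta,t)\coloneqq -D\rho\circ J_0(\rho(x),-\eta,t)\circ D\rho\in\mathcal{J}_\star^k$, and $\lim_{s\to+\infty}\hat u_0(s)\in\rho\cdot\Lambda_0$ via \eqref{Lambda0map}. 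The pair $U\coloneqq(\hat u_0,u)$ then sits on $[0,\infty)\times S^1$ with target $T^*\R^k\times T^*\R^n$ and paired multipliers, satisfies the product Floer equation for $H_0\oplus H$, and obeys the boundary condition $U(0,t)\in L$, where
\[
L\coloneqq \bigl\{\bigl((\rho(z),-\eta),(\sigma^{-1}(z,(w,0)),\eta)\bigr):z\in T^*\R^k,\ w\in\R^{n-k},\ \eta\in\R\bigr\}.
\]
Since $\rho$ is anti-symplectic while $\sigma$ is symplectic, a direct check shows the geometric piece $\widetilde L\coloneqq\{(\rho(z),\sigma^{-1}(z,(w,0))):z,w\}$ is a linear Lagrangian subspace of $(T^*\R^k\times T^*\R^n,\omega_k\oplus\omega_n)$ of dimension $n+k$.

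With this reformulation the linearised operator $D$ of the combined Floer problem is a Cauchy--Riemann-type operator on a half-cylinder with a constant Lagrangian boundary condition at $s=0$ and a Morse--Bott asymptote at $s=+\infty$. Its Fredholm property on the appropriate weighted Sobolev spaces then follows from the standard theory of such operators (cf.\ \cite{CieliebakFrauenfelder2009,pasquotto2018}), given that the asymptote is of Morse--Bott type (Section~\ref{sec:Morse-Bott}) and that $\widetilde L$ is transverse to the asymptotic kernel, which one verifies using Lemma~\ref{lem:orbits} together with Remark~\ref{H2}.

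For the index I would invoke the additivity of Robbin--Salamon/Maslov-type indices under gluing. Since $\widetilde L$ is \emph{constant} along $S^1$ and the paired-multiplier constraint is a constant diagonal in $\R^2$, neither contributes to the index, so $\operatorname{ind}(D)$ reduces to the sum of the two asymptotic contributions at $s=+\infty$. The end of $u$ in $\Lambda$ yields the standard Morse--Bott contribution $\tCZ(\Lambda)+\tfrac12\dim\Lambda$. For the reflected end of $\hat u_0$ in $\rho\cdot\Lambda_0$, the linearisation transforms via $D\rho$ into the time-reversed linearisation along $u_0$; since anti-symplectic conjugation combined with time reversal negates the transverse Conley--Zehnder index while preserving the Morse--Bott dimension, this end contributes $-\tCZ(\Lambda_0)+\tfrac12\dim\Lambda_0$. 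Summing gives the claimed formula.

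\textbf{Main obstacle.} The principal technical step is the careful bookkeeping of these index contributions, especially the verification that ``time-reversal composed with $\rho$'' sends the path $t\mapsto D\phi^t_{H_0}$ of symplectic linearisations to a path whose Robbin--Salamon index is the negative of the original. This requires an explicit symplectic trivialisation over the reflected cylinder that intertwines $\rho$ with the standard anti-symplectic involution on $\mathbb{C}^k$, together with invariance of the Fredholm index under changes of trivialisation (guaranteed because the constant Lagrangian $\widetilde L$ provides a Lagrangian cap). A secondary subtlety is confirming the required transversality of $\widetilde L$ to the asymptotic Morse--Bott subspace for non-constant orbits, which reduces via Lemma~\ref{lem:orbits} to the fact that non-constant orbits live in $\sigma^{-1}(\Sigma_0\times\{0\})$.
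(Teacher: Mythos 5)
Your reduction is the same as the paper's: reflect the left half-trajectory through the anti-symplectic involution $\rho$, obtain a single positive half-cylinder valued in $T^*\R^k\times T^*\R^n$ with the constant Lagrangian boundary condition $L=(\rho\oplus\sigma^{-1})^{-1}(\Delta_{T^*\R^k}\times\chi_0)$ and the anti-diagonal constraint $\xi_1(0)=-\xi_0(0)$, and deduce Fredholmness from the standard theory of Cauchy--Riemann operators with Lagrangian boundary conditions and Morse--Bott asymptotics. Up to this point you match Step 1 of the paper's proof.

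The gap is in the index bookkeeping for the Lagrange multipliers. The linearised operator is not a pure Cauchy--Riemann operator: it couples $\partial_s z+\mathbb{J}\partial_t z+Pz$ to the multiplier directions through the terms $\zeta_i\beta_i$ and $\int_{S^1}\langle z_i,\beta_i\rangle\,dt$ (equation \eqref{eq-D}), and your assertion that the multiplier part ``does not contribute to the index'' is unjustified and, taken literally, false. To decouple, the paper must compute the perturbation terms $\tau_i=\int_{S^1}\langle z_i,b_i\rangle\,dt$ of \cite[App.\ C]{CieliebakFrauenfelder2009} and verify $\tau_0,\tau_1>0$ (Step 3); only then is the homotopy to the product operator $\mathbf{D}^{\mathrm{new}}=(\mathbf{D}_1,\mathbf{D}_2)$ through Fredholm operators available, and the decoupled ODE operator $\mathbf{D}_2$ contributes $\operatorname{ind}\mathbf{D}_2=+1$, not $0$ (the sign of $\tau_i$ determines whether $f\mapsto f'+\tau_i f$ on the weighted half-line has a one-dimensional kernel or a one-dimensional cokernel, so this cannot be skipped). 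Correspondingly, your claimed per-end contribution $\tCZ(\Lambda)+\tfrac12\dim\Lambda$ is off by $-\tfrac12$ at each end: in the weighted Sobolev setting the asymptote contributes $\CZ(\Psi_i)-\tfrac12\operatorname{null}(\Psi_i)+\dim\Lambda_i$, and here $\operatorname{null}(\Psi_i)=\dim\Lambda_i+1$ because $\ker(D\phi^{\eta}-\operatorname{Id})$ exceeds $T\Lambda_i$ by the Liouville direction. The two ends therefore contribute an extra $-1$ in total, which is exactly cancelled by $\operatorname{ind}\mathbf{D}_2=+1$. Your final formula is correct only because these two omissions cancel; as written, neither is established. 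The remaining ingredients of your sketch (vanishing of the boundary correction $m(L)$ for a single constant Lagrangian, and the negation of the Conley--Zehnder index under anti-symplectic conjugation with time reversal) do agree with the paper's Step 5.
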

\begin{proof}
We prove the theorem in five steps.\\

\paragraph{\textbf{1.}}
Denote by $\mathfrak{y}\coloneqq  \cA^{H_0}(\Lambda_0) \in \operatorname{CritVal}(\cA^{H_0})$, so that by Corollary \ref{cor:CritVal} we have $\Lambda_0=\Lambda_0^{\mathfrak{y}}$. In this first step, we show that $\mathcal{M}_{\operatorname{hyb}}(\Lambda_0^{\mathfrak{y}} , \Lambda)$ is a Fredholm problem. We begin by identifying $\mathcal{M}_{\operatorname{hyb}}(\Lambda_0^{\mathfrak{y}} , \Lambda)$ with a closely related set\footnote{The fact that ``hyb'' is occurs both as a subscript and a superscript is not a typo! They represent formally different spaces.} $\mathcal{M}^{\operatorname{hyb}}(\Lambda_0^{-\mathfrak{y}} , \Lambda)$ defined as follows:

Let $L \subseteq T^*\R^k\times T^*\R^n= \R^{2(n+k)}$ denote the set of elements of the form 
\[
L = \left\{ (a,-b,a,b,c,0_{n-k})\ \Big|\ a,b \in \R^k, c \in \mathbb{R}^{n-k} \right\}.
\]
In other words, $L$ is the preimage of $\Delta_{T^*\R^k}\times \chi_0$ under $\rho \oplus \sigma^{-1}$, where $\rho$ is the anti-symplectic map defined in \eqref{rho},  $ \sigma$ is the symplectic splitting defined in \eqref{splitting}, $\Delta_{T^*\R^k}$ stands for the diagonal in $(T^*\R^k)^2$ and $\chi_0$ is the $0$-section of $T^*\R^{n-k}$. Then $L$ is a Lagrangian submanifold of $(T^*\R^k\times T^*\R^n,\omega_k\oplus \omega_n)$.

For $J_0\in \mathcal{J}_\star^k, J \in \mathcal{J}_\star^n$ we define, using \eqref{JnRho}, an element $\widehat{J}\in \mathcal{J}_\star^{n+k}$ by
\begin{equation}\label{J1}
\widehat{J} \coloneqq \begin{pmatrix}
-D\rho \circ J_0 \circ D\rho & \\
& \;J
\end{pmatrix}.
\end{equation}
An element of $\mathcal{M}^{\operatorname{hyb}}(\Lambda_0^{-\mathfrak{y}} , \Lambda)$ is a pair $(w,\xi)$
with
\begin{equation}\label{eq-w-first}
    w \colon [0,+\infty) \times S^1 \to T^*\R^k\times T^*\R^n, \qquad \xi \colon [0,+\infty) \to \R^2,
\end{equation}
satisfying the equations
\begin{align}
 \partial_s w + \widehat{J}\partial_t w - \xi \cdot X_1 (w)& = 0, \label{eq-w-third}\\
 \partial_s \xi -X_2(w)& = 0, \label{eq-xi-1st}
\end{align}
where
\begin{align}
  (\xi_0,\xi_1) \cdot X_1(w_0,w_1) & \coloneqq  \widehat{J}\begin{pmatrix} \xi_0 X_{H_0}(w_0) \\ \xi_1 X_H(w_1) \end{pmatrix}, \label{eq-w-fourth}\\
  X_2(w) & \coloneqq  \begin{pmatrix} -\int_{S^1}H_0(w_0) \\ -\int_{S^1} H (w_1)\end{pmatrix}, \label{eq-xi-2nd}
\end{align}
together with the limit conditions
\begin{equation}
\lim_{s\to + \infty}w(s)\in \Lambda_0^{-\mathfrak{y}}\times \Lambda, \label{limit2}
\end{equation}
and the coupling conditions
\begin{equation}
w(0,t) \in L, \qquad \forall \,t \in S^1,\qquad \textrm{and} \qquad \xi_1(0)=-\xi_0(0).\label{eq-w-second}
\end{equation}
There is a natural identification of elements $((v_0,\eta_0),(v,\eta))\in\mathcal{M}_{\operatorname{hyb}}(\Lambda_0^{\mathfrak{y}} , \Lambda)$ with elements $(w,\xi)\in\mathcal{M}^{\operatorname{hyb}}(\Lambda_0^{-\mathfrak{y}} , \Lambda)$ given by
\begin{equation}\label{bijection}
w(s,t)\coloneqq  \begin{pmatrix} \rho \circ v_0(-s,t) \\ v(s,t) \end{pmatrix}, \qquad
\xi(s)\coloneqq  \begin{pmatrix} -\eta_0(-s)\\ \eta(s) \end{pmatrix}.
\end{equation}
Indeed, under this identification conditions \eqref{limit1} and \eqref{coupling} become conditions \eqref{limit2} and \eqref{eq-w-second}. 
Furthermore, we compute that 
\[
    \partial_s w = \begin{pmatrix} - D \rho [\partial_s v_0] \\ \partial_s v \\ \end{pmatrix}, \qquad
    \partial_t w  = \begin{pmatrix} D\rho [\partial_t v_0 ]\\ \partial_t v  \\  \end{pmatrix}, \qquad
    \partial_s \xi = \begin{pmatrix} \partial_s \eta_0 \\ \partial_s \eta  \\  \end{pmatrix}.
\]
so that by \eqref{JnRho}, \eqref{XH0Rho}, \eqref{J1}, and \eqref{bijection} we have
\begin{align*}
 \partial_s w + \widehat{J}\partial_t w & = \begin{pmatrix} -D\rho (\partial_s v_0+J_0 \partial_t v_0) \\ \partial_s v+ J \partial_t v \end{pmatrix} =\begin{pmatrix} -\eta_0 D\rho \circ J_0 X_{H_0}(v_0) \\ \eta J X_H (v) \end{pmatrix}\\
& = \begin{pmatrix} \eta_0 D\rho \circ J_0 \circ D\rho [X_{H_0}(w_0)] \\ \eta J X_H (w_1) \end{pmatrix} = \widehat{J}\begin{pmatrix} \xi_0 X_{H_0}(w_0) \\ \xi_1 X_H(w_1) \end{pmatrix}.
\end{align*}
In this way the Rabinowitz Floer equations \eqref{Floer} become equivalent to \eqref{eq-w-third}--\eqref{eq-xi-2nd}.  So $ \mathcal{M}_{\operatorname{hyb}}(\Lambda_0^{\mathfrak{y}} , \Lambda)$ is a Fredholm problem if and only if $\mathcal{M}^{\operatorname{hyb}}(\Lambda_0^{-\mathfrak{y}} , \Lambda)$ is a Fredholm problem.

Fix $r >2$. Let $\mathcal{B}(\Lambda_0^{-\mathfrak{y}},\Lambda)$ denote the Banach manifold of pairs $(w,\xi)$ as in \eqref{eq-w-first} satisfying \eqref{limit2} and \eqref{eq-w-second}. For $(w,\xi)\in \mathcal{B}(\Lambda_0^{-\mathfrak{y}},\Lambda)$, we do not require equations \eqref{eq-w-third}--\eqref{eq-xi-2nd} to hold. Instead, $(w,\xi)$ should be locally of class $W^{1,r}$ and converge for $s\rightarrow +\infty$ exponentially to an element $(x_0,x_1) \in \Lambda_0^{-\mathfrak{y}}\times\Lambda$. The tangent space of $\mathcal{B}(\Lambda_0^{-\mathfrak{y}},\Lambda)$ at $(w,\xi)$ can be identified with
\begin{align*}
T_{(w,\xi)} \mathcal{B}(\Lambda_0^{-\mathfrak{y}},\Lambda)&\cong \mathcal{W}^{1,r}_{\delta,L}\oplus \mathcal{W}^{1,r}_{\delta,\Delta}\oplus  T_{ x_0}\Lambda_0\oplus T_{x_1}\Lambda,\\
\text{where }\qquad\qquad\mathcal{W}^{1,r}_{\delta,L} & \coloneqq \left\lbrace z \in W^{1,r}_{\delta} \left([0, \infty) \times S^1 , \mathbb{R}^{2k} \times \mathbb{R}^{2n} \right) \ \Big|\ z(0,t) \in L \right\rbrace,\\
\mathcal{W}^{1,r}_{\delta,\Delta} & \coloneqq \left\lbrace \zeta \in W^{1,r}_{\delta}\left([0,\infty), \mathbb{R}^2\right) \ \Big|\ \zeta_1(0)=-\zeta_0(0) \right\rbrace.
\end{align*}
Here, $\delta$ indicates that we are working with weighted Sobolev spaces with weight $\gamma(s)=e^{\delta s}$ (as our asymptotic operators will not be bijective).
 Explicitly, $f\in \mathcal{W}^{1,r}_\delta$ if and only if $f\cdot \gamma\in \mathcal{W}^{1,r}$. Let $\mathcal{E}$ be the Banach bundle over $\mathcal{B}(\Lambda_0^{-\mathfrak{y}},\Lambda)$ whose fiber at $(w,\xi)$ is given by
 \[\mathcal{E}_{(w,\xi)}\coloneqq L^r_{\delta}\left( [0,\infty) \times S^1, \mathbb{R}^{2k} \times \mathbb{R}^{2n} \right) \times L^r_{ \delta}\left([0,\infty), \mathbb{R}^2\right).\]
 Define a section
 \begin{equation}
 \bar{\partial} \colon \mathcal{B}(\Lambda_0^{-\mathfrak{y}},\Lambda)\to \mathcal{E}, \label{eq:Banach-sec}
 \end{equation}
 by the left-hand side of the equations \eqref{eq-w-third} and \eqref{eq-xi-1st}. Then $\mathcal{M}^{\operatorname{hyb}}(\Lambda_0^{-\mathfrak{y}} ; \Lambda)=\bar{\partial}^{-1}(0)$. Since \eqref{eq-w-second} is a Lagrangian boundary condition and \eqref{eq-w-third} is a perturbed Cauchy-Riemann equation on a half-cylinder, coupled with a pair of ordinary differential equations \eqref{eq-xi-1st} for $\xi$, the proof that the linearisation 
 \[
 D\bar{\partial} \colon T_{(w,\xi)}\mathcal{B}(\Lambda_0^{-\mathfrak{y}},\Lambda)\to \mathcal{E}_{(w,\xi)},
 \]
  is a Fredholm operator is a routine argument (see for instance \cite[Sec. 5.4]{AbbondandoloSchwarz2010}), and we will not go over the details here. \\

\paragraph{\textbf{2.}}
Computing the index however is somewhat less standard, for two reasons:
\begin{enumerate}[label=(\roman*)]
    \item In contrast to standard index computations in Rabinowitz Floer homology, here there are two Lagrange multipliers to worry about.
    \item We are on a half cylinder, and thus arguments using the spectral flow are not directly applicable.
\end{enumerate}
This computation is very similar to \cite[Theorem 4.12]{AbbonMerry2014}. In order to keep the exposition reasonably clean, we perform the calculation only in the case where both $ \mathcal{A}^{H_0}(\Lambda_0^{-\mathfrak{y}})$ and $ \mathcal{A}^H(\Lambda)$ are non-zero. The other cases are easier and are left to the reader.

In this step, we formulate an appropriate local model for the problem $ \mathcal{M}^{\operatorname{hyb}}(\Lambda_0^{-\mathfrak{y}} , \Lambda)$ by linearising equations \eqref{eq-w-third}--\eqref{eq-xi-2nd} at a solution $(w,\xi) \in \mathcal{M}^{\operatorname{hyb}}(\Lambda_0^{-\mathfrak{y}} , \Lambda)$, and express the virtual dimension of  $\mathcal{M}^{\operatorname{hyb}}(\Lambda_0^{-\mathfrak{y}} , \Lambda)$ in terms of the index of this linearised operator. 

Write $m_0\coloneqq  k, m_1\coloneqq n$ and let 
\[
    P_i \in W^{1, \infty}\left( [0,+\infty) \times S^1 , \operatorname{Mat}(2m_i)\right),
\]
for $i=0,1$ denote two paths of matrices that extend to the compactification $[0, +\infty] \times S^1$ in such a way that for each $i=0,1$
\[
\lim_{s_0 \rightarrow + \infty}\underset{(s,t)\in[s_0,\infty )\times S^1} {\operatorname{ess\, sup}}\Big(\left|\partial_s P_i(s,t)\right|+\left|\partial_t P_i(s,t)-\partial_t P_i(t,\infty)\right|\Big)=0.
\]
Assume in addition that the limit matrices $S_i(t) \coloneqq P_i(+ \infty, t)$ are symmetric:
\begin{equation}
    \label{eq-limit-S}
    S_0(t) \in \operatorname{Sym}( \mathbb{R}^{2k}), \qquad S_1(t) \in \operatorname{Sym}( \mathbb{R}^{2n}).
\end{equation}
Next, for $i=0,1$ let 
\[
\beta_i \in W^{1, \infty}( [0,\infty) \times S^1 , \mathbb{R}^{2m_i}),\\
\]
denote two vector-valued paths which extend to the compactification $[0,+\infty] \times S^1$ in such a way that for each $i = 0,1$,
 \[
\lim_{s_0 \rightarrow + \infty}\underset{(s,t)\in (s_0,\infty)\times S^1} {\operatorname{ess\,sup}}\Big(\left|\partial_{s}\beta_i(s,t)\right|+\left|\partial_{t}\beta_i(s,t)-\partial_{t}\beta_i(t,+\infty)\right|\Big)=0.
\]
Abbreviate $b_i(t) \coloneqq \beta_i(+\infty,t)$, and assume that 
\[
b_i  \in W^{1,2}(S^1, \mathbb{R}^{2m_i}) \cap \operatorname{range}(\mathbb{J}_{m_i}\partial_t +S_i), \qquad i = 0,1.
\]
We are now ready to introduce an appropriate local model. Consider the operator $\mathbf{D}  \colon \mathcal{W}^{1,r}_{ \delta, L} \times \mathcal{W}^{1,r}_{\delta,\Delta}   \to \mathcal{E}$ defined by\footnote{We somewhat ambiguously switch between row and column notation, favouring whichever is cleaner for any given equation.}
\begin{equation}\label{eq-D}
    \mathbf{D} \begin{pmatrix}
z_0 \\ z_1 \\ \zeta_0 \\ \zeta_1 
\end{pmatrix}
\coloneqq 
\begin{pmatrix}
\partial_s z_0 + \mathbb{J}_k \partial_t z_0 + P_0 z_0 + \zeta_0 \beta_0 \\
\partial_s z_1 + \mathbb{J}_n \partial_t z_1 + P_1 z_1 + \zeta_1 \beta_1 \\
\zeta_0' + \int_{S^1} \langle z_0, \beta_0  \rangle \,dt\\
\zeta_1' + \int_{S^1} \langle z_1, \beta_1  \rangle \,dt
\end{pmatrix}
\end{equation}
Note that $\mathbf{D}$ is the restriction to $\mathcal{W}^{1,r}_{\delta,L}\oplus\mathcal{W}^{1,r}_{\delta,\Delta}$ of the linearisation $D\bar{\partial}$ of the problem \eqref{eq:Banach-sec} at a solution $(w, \xi)\in\mathcal{M}^{\operatorname{hyb}}(\Lambda_0^{-\mathfrak{y}} ; \Lambda)$ of \eqref{eq-w-third}--\eqref{eq-xi-2nd}, when viewed in a suitable symplectic trivialisation.
Explicitly, for $H=H_0+H_1$ satisfying \eqref{defH} we have
\begin{align}
S_0 & =  -\eta_0 \mathbb{J}_k D_{v_0}X_{H_0}, && b_0 (t)  = -\mathbb{J}_k [ X_{H_0}] (v_0(t)),\label{Sb0}\\
S_1 & = -\eta \mathbb{J}_n D_v X_H, && b_1(t)  = - \mathbb{J}_n [X_H] (v(t)),\label{Sb1}
\end{align}
for some $(v_0,\eta_0)\in \Lambda_0^{-\mathfrak{y}}, (v,\eta)\in \Lambda$.
Consequently, we have 
\begin{equation}\label{dimM}
 \operatorname{vir\,dim} \mathcal{M}^{\operatorname{hyb}}(\Lambda_0^{-\mathfrak{y}} , \Lambda)  =  \operatorname{ind} D\bar{\partial}= \operatorname{ind} \mathbf{D} + \dim \Lambda_0 + \dim \Lambda.
\end{equation}
$\ $
\paragraph{\textbf{3.}}
Our aim is to homotope the operator $\mathbf{D}$ from \eqref{eq-D} through Fredholm operators into a new operator of product form 
\begin{equation}\label{Dnew}
\mathbf{D}^{\operatorname{new}} (z, \zeta) = \left( \mathbf{D}_1(z), \mathbf{D}_2( \zeta) \right).
\end{equation}
The operator $\mathbf{D}^{\operatorname{new}}$ decouples the perturbed Cauchy-Riemann equation satisfied by $z$ and the ordinary differential equation satisfied by $ \zeta$. This will allow us to compute the index of $\mathbf{D}$:
\begin{align}
    \operatorname{ind} \mathbf{D}& = \operatorname{ind} \mathbf{D}^{\operatorname{new}}  & &\text{since the index is constant along deformations,} \nonumber \\
  &  = \operatorname{ind} \mathbf{D}_1 + \operatorname{ind}\mathbf{D}_2 & & \text{by additivity.} \label{eq-index-sum}
\end{align}
The operator $\mathbf{D}^{\operatorname{new}}$ is constructed using methods from \cite[App.\ C]{CieliebakFrauenfelder2009}.
The index of $ \mathbf{D}_1$ can be computed using the methods from \cite{AbbondandoloSchwarz2010}. Meanwhile the index of $ \mathbf{D}_2$ can be computed by hand.

In order to construct the homotopy we first need to compute the perturbation term defined in \cite[Def.\ C.3]{CieliebakFrauenfelder2009} as follows: for $i=0,1$ and $\delta>0$ choose $z_i \in W^{1,2}(S^1, \mathbb{R}^{2m_i})$ such that 
\begin{equation}\label{Av=b}
(\mathbb{J}_{m_i}\partial_t +S_i+\delta \operatorname{Id})z_i = b_i,
\end{equation}
and define
\begin{equation}
\label{tau1}
\tau_i \coloneqq  \int_{S^1} \langle z_i(t), b_i(t) \rangle \,dt.
\end{equation}
Since the operator $\mathbb{J}_{m_i}\partial_t +S_i$ is self-adjoint, the number $\tau_i$ does not depend on the choice of $z_i$. We will show that in our setting, for $\delta>0$ small enough
\begin{equation}\label{tau2}
\tau_0, \tau_1>0.
\end{equation}
First observe, that by Lemma \ref{lem:orbits} every periodic orbit $(v,\eta)\in \Crit(\cA^H), \eta\neq 0$ is of the form $( \sigma^{-1}(v_1,0),\eta)$ with $(v_1,\eta)\in \Crit(\cA^{H_0})$. As a result 
\[
dH(v)=dH \circ \sigma^{-1} (v_1,0)=dH_0(v_1).
\]
Therefore, if $(z_0,z)$ is a solution to \eqref{Av=b} and
 $ (z_1,z_2) \coloneqq \sigma(z) \in W^{1,2}(S^1,T^*\R^k)\times  W^{1,2}(S^1,T^*\R^{n-k})$, then by \eqref{Sb1} we get
\[
\tau_1 = \int_{S^1}d_v H (z)\,dt =\int_{S^1}d_{v_1}H_0 (z_1)\,dt.
\]
Therefore, without loss of generality we can assume that $z_2=0$. Consequently, by \eqref{CritEta}, \eqref{Sb0} and \eqref{Sb1}, equation \eqref{Av=b} translates to
\begin{equation}\label{Av=b2}
(\mathbb{J}_k\partial_t +\eta_i A_0 +\delta \operatorname{Id})z_i = A_0 \exp[t \eta_i \mathbb{J}_k A_0] v_i(0),
\end{equation}
where $(v_0,\eta_0)\in \Lambda_0^{-\mathfrak{y}}, ( \sigma^{-1}(v_1,0),\eta_1)\in \Lambda \subseteq \Crit (\cA^H)$.

It is easy to check that
\[
z_i(t)\coloneqq \frac{1}{\delta} A_0 \exp[t \eta_i \mathbb{J}_k A_0] v_i(0),
\]
solves \eqref{Av=b2} and as a result for $i=0,1$ we have
\[
\tau_i = \frac{1}{\delta}\int_{S^1} \left\|A_0  \exp[t \eta_i \mathbb{J}_k A_0] v_i(0)\right\|^2dt>0,
\]
which proves \eqref{tau2}.\\

\paragraph{\textbf{4.}}
In this step we will construct the homotopy between the operator $\mathbf{D}$ from \eqref{eq-D} and a new operator $\mathbf{D}^{\operatorname{new}}$ of the form as in \eqref{Dnew}.

If we set 
\[
Q = \begin{pmatrix}
\mathbb{J}_k \partial_t + P_0  &  \\
& \mathbb{J}_n \partial_t + P_1
\end{pmatrix}, \qquad  \beta = \begin{pmatrix}
\beta_0 \\
\beta_1
\end{pmatrix}, 
\qquad
B \begin{pmatrix}
z_0 \\ z_1 
\end{pmatrix}
= 
\begin{pmatrix}
\int_{S^1} \langle z_0, \beta_0  \rangle \,dt\\ 
\int_{S^1} \langle z_1, \beta_1  \rangle \,dt
\end{pmatrix},
\]
then the operator $ \mathbf{D}$ can be written as 
\[
\mathbf{D} = \partial_s + \begin{pmatrix}
Q & \beta \\
B & 0
\end{pmatrix}.
\]
We now consider the homotopy $\{ \mathbf{D}^{\theta }\}_{ \theta \in [0,1]}$ of operators given by
\[
\mathbf{D}^{\theta} = \partial_s +  \begin{pmatrix}
Q & (1 - \theta)  \beta \\
(1 - \theta) B & c( \theta)
\end{pmatrix},
\]
where $c  \colon  [0, 1] \to \mathbb{R}^2$ is defined by
\begin{equation}
\label{eq-c}
c( \theta)  \coloneqq 
\begin{pmatrix}
\theta \tau_0 \\
\theta \tau_1
\end{pmatrix}.
\end{equation}
By \cite[Thm. C.5]{CieliebakFrauenfelder2009} the operators $\mathbf{D}^{\theta}$ are all Fredholm of the same index. We define
\begin{align*}
 \mathbf{D}^{\operatorname{new}} & \coloneqq \mathbf{D}^1=(\mathbf{D}_1,\mathbf{D}_2), & & \textrm{where} \\
 \mathbf{D}_1 & \colon \mathcal{W}^{1,r}_{\delta, L}  \to L^r_{\delta} \left( [0,\infty) \times S^1, \mathbb{R}^{2n+2k}\right),
  & z & \mapsto   \partial_s z + Q z,\\
\mathbf{D}_2 & \colon \mathcal{W}^{1,r}_{\delta, \Delta}  \to L^r_{ \delta} \left( [0,\infty), \mathbb{R}^2\right), &
 \begin{pmatrix}
 \zeta_0 \\ \zeta_1
 \end{pmatrix}  &\mapsto \begin{pmatrix}
 \zeta_0' + \tau_0 \zeta_0 \\ \zeta_1' + \tau_1\zeta_1 
 \end{pmatrix} .
\end{align*}

\paragraph{\textbf{5.}}
In this final step we compute the index of $ \mathbf{D}_1$ and $ \mathbf{D}_2$. For this denote by $ \Psi_i$ for $i=0,1$ the following paths of symplectic matrices:
\begin{align*}
    \Psi_0 & \colon [0,1] \to \operatorname{Symp}( \mathbb{R}^{2k}, \omega_k), && \begin{cases}
    \Psi'_0(t) = \mathbb{J}_k S_0(t) \Psi_0(t),\\
    \Psi_0(0) = \operatorname{Id}_{2k},
    \end{cases} \\
        \Psi_1 & \colon [0,1] \to \operatorname{Symp}( \mathbb{R}^{2n}, \omega_n), && \begin{cases}
    \Psi'_1(t) = \mathbb{J}_n S_1(t) \Psi_1(t),\\
    \Psi_1(0) = \operatorname{Id}_{2n},
    \end{cases}
\end{align*}
Denote by $\CZ(\Psi_i) \in \frac{1}{2} \mathbb{Z}$ the (full) Conley-Zehnder index of these paths, and let $ \operatorname{null}( \Psi_i) \coloneqq \dim \left( \ker \Psi_i(1) - \operatorname{Id} \right) $ denote the nullity. Then the Fredholm index of $\mathbf{D}_1$ is given by \cite[Theorem 5.25]{AbbondandoloSchwarz2010} as
\begin{equation}
\label{eq-index-D1}
\operatorname{ind} \mathbf{D}_1 = \sum_{i=0}^1 \left( \CZ(\Psi_i) - \frac{1}{2}\operatorname{null}(\Psi_i) \right)+m(L),
\end{equation}
where $m(L)$ stands for the correction term coming from the boundary conditions.
Actually \eqref{eq-index-D1} is much simpler than the general statement in \cite[Thm. 5.25]{AbbondandoloSchwarz2010}, since we only have a single boundary condition (rather than a set of jumping boundary conditions). The correction term $m(L)$ is computed by \cite[Thm. 5.25]{AbbondandoloSchwarz2010} to be
\[
m(L)\coloneqq \frac{\dim T^*\R^{n+k}}{2} -\frac{1}{2}\dim \Delta_{T^*\R^{n+k}} - \dim L\\
 + \dim \left(\Delta_{T^*\R^{n+k}}\cap(L\times L) \right),
\]
where $\Delta_{T^*\R^{n+k}}$ is the diagonal in $(T^*\R^{n+k})^2$. In this case,
\begin{align*}
\dim \Delta_{T^*\R^{n+k}} & =2(n+k), \\
\dim \left(\Delta_{T^*\R^{n+k}}\cap(L\times L) \right) & =\dim L=n+k,
\end{align*}
hence\footnote{Sanity check: The same computation works for any Lagrangian $L$. Thus the correction term is always $0$ when there is but a single Lagrangian boundary condition. This can also be proved directly, without appealing to the machinery of \cite{AbbondandoloSchwarz2010}.} $m(L)=0$. On the other hand, by Proposition \ref{prop:muCZ} we have
\begin{align*}
\CZ(\Psi_0) & =\CZ(\Lambda_0^{-\mathfrak{y}})= - \CZ(\Lambda_0^\mathfrak{y})=-\tCZ(\Lambda_0),\\
\CZ(\Psi_1) & = \CZ(\Lambda)= \tCZ(\Lambda).
\end{align*}
Moreover, by Lemma \ref{lem:orbits} and \eqref{CritEta} we have
\begin{align*}
\dim \Lambda_0 & = \dim \Lambda_0^{-\mathfrak{y}}= \dim \left( \ker \left(\Psi_0(1) - \operatorname{Id}\right) \cap \Sigma_0\right)  =\operatorname{null}( \Psi_0)-1,\\
\dim \Lambda & = \dim \left(\ker \left( \Psi_1(1) - \operatorname{Id}\right) \cap \left(\Sigma_0\times \{0\}\right)\right) =\operatorname{null}( \Psi_1)-1,
\end{align*}
which gives
\begin{equation}\label{D1}
\operatorname{ind} \mathbf{D}_1 =   \tCZ(\Lambda) - \tCZ(\Lambda_0) -\frac{1}{2}\left( \dim \Lambda_0 + \dim \Lambda \right)-1.
\end{equation}
Now we would like to compute $\operatorname{ind}  \mathbf{D}_2$. Note that
\[
\operatorname{ind} \mathbf{D}_2 = 2 \operatorname{ind} \widetilde{\mathbf{D}}_2- 1,
\]
where the $-1$ comes from the boundary condition $\zeta_1(0)=-\zeta_0(0)$ and $\widetilde{\mathbf{D}}_2$ is an operator of the form
\begin{gather*}
\widetilde{\mathbf{D}}_2 \colon  W^{1,r}_{\delta}\left([0,+\infty), \R\right)  \to  L^r_{ \delta}\left([0,+\infty), \R\right),\\
f \mapsto f'+\tau f.
\end{gather*}
Note that $\dim \operatorname{coker} \widetilde{\mathbf{D}}_2 =0$. However, since  we are working with weighted Sobolev spaces on a positive half-cylinder, the dimension of $\ker \widetilde{\mathbf{D}}_2=1$, since in both cases $\tau>0$ by \eqref{tau1}. Consequently, $\operatorname{ind} \mathbf{D}_2 = 1$. Combining this with \eqref{dimM} and \eqref{D1}
we have
\begin{align*}
   \operatorname{vir\,dim} \mathcal{M}_{\operatorname{hyb}}(\Lambda_0 , \Lambda) & = \operatorname{ind} \mathbf{D}^{\operatorname{new}}  + \dim \Lambda_0 + \dim \Lambda \\
   & = \tCZ(\Lambda) - \tCZ(\Lambda_0) + \frac{1}{2} \left( \dim \Lambda_0 + \dim \Lambda\right).
\end{align*}
This completes the proof of Theorem \ref{thm-index}.
\end{proof}
\subsection{Automatic transversality.}\label{sec:trans}
If the space $\mathcal{M}_{\operatorname{hyb}}(\Lambda_0 ,\Lambda)$ contains no stationary solutions, transversality can be achieved for generic choices of almost complex structures. This is a standard -- albeit, difficult -- argument, which involves no new ideas not already present in the proof that the moduli spaces defining the Rabinowitz Floer complex are generically transverse (see for instance \cite[Thm.\ 4.11]{AbbonMerry2014} or \cite[Thm.\ 2]{Wisniewska2017} for the non-compact case). We therefore omit the proof.

The case where  $\mathcal{M}_{\operatorname{hyb}}(\Lambda_0 ,\Lambda)$ admits stationary solutions is somewhat less standard, however. In the presence of stationary solutions it is not possible to obtain regularity by perturbing $J$, and one must therefore prove transversality ``by hand''. This is the content of the following section.

Fix $\eta\in \operatorname{CritVal}(\cA^{H_0})=\operatorname{CritVal}(\cA^H)$ and let $\Lambda_0^\eta \subseteq \Crit(\cA^{H_0})$ and $\Lambda^\eta \subseteq \Crit(\cA^H)$ be the corresponding connected components of the respective critical sets. Then  $\mathcal{M}_{\operatorname{hyb}}( \Lambda_0^\eta , \Lambda^\eta)$ consists entirely of stationary solutions by Lemma \ref{lem:stationary} and

\begin{equation}\label{eq:LambdaLambda}
\begin{aligned}
\text{for }\eta\neq 0 \colon  \qquad \mathcal{M}_{\operatorname{hyb}}(\Lambda_0^\eta,\Lambda^\eta)&\cong \Lambda_0^\eta\cong \Lambda^\eta,\\
\text{for }\eta= 0 \colon  \qquad \mathcal{M}_{\operatorname{hyb}}(\Lambda_0^\eta,\Lambda^\eta)&\cong \Sigma_0\times\chi_0,
\end{aligned}
\end{equation}
where $\chi_0\subset T^\ast\R^{n-k}$ denotes the 0-section.

\begin{prop}\label{prop:trans}~
\begin{enumerate}[label=\emph{(\roman*)}]
\item 
\label{at2}
The problem $ \mathcal{M}_{\operatorname{hyb}}( \Lambda_0^\eta,\Lambda^\eta )$ is always transversly cut out.
\item 
\label{at1}
 The kernel of the linearisation $D\bar{\partial}$ of the problem $ \mathcal{M}^{\operatorname{hyb}}( \Lambda_0^{-\eta},\Lambda^\eta)$ (see \eqref{eq:Banach-sec}) at a stationary solution $(w,\xi)\in\mathcal{M}^{\operatorname{hyb}}( \Lambda_0^{-\eta},\Lambda^\eta)$ has dimension $\frac{1}{2}(\dim\Lambda+\dim\Lambda_0)$. 
\end{enumerate}
\end{prop}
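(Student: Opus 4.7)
The plan is to first prove \ref{at1} by directly computing the kernel of $D\bar\partial$ at a stationary solution, and then deduce \ref{at2} by comparing this kernel dimension against the Fredholm index established in Theorem \ref{thm-index}. Throughout I will work in the reformulated setup of \eqref{bijection}, so that $D\bar\partial$ splits as the operator $\mathbf{D}$ of \eqref{eq-D}---which has $s$-independent coefficients at a stationary background---acting on $\mathcal{W}^{1,r}_{\delta,L} \oplus \mathcal{W}^{1,r}_{\delta,\Delta}$, plus contributions from the asymptotic summands $T_{x_0}\Lambda_0^{-\eta} \oplus T_{x_1}\Lambda$ of the tangent space of the ambient Banach manifold $\mathcal{B}(\Lambda_0^{-\eta}, \Lambda)$.

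\textbf{Kernel lower bound.} By Lemma \ref{lem:stationary} and \eqref{eq:LambdaLambda}, the stationary solutions form a smooth submanifold $\mathcal{M}_{\operatorname{hyb}}(\Lambda_0^\eta, \Lambda^\eta)$ of $\mathcal{B}(\Lambda_0^{-\eta}, \Lambda)$, whose tangent space at $(w,\xi)$ embeds naturally into $\ker D\bar\partial$. Combining $\dim \Lambda_0^\eta = \dim \Lambda^\eta$ (Remark \ref{rem:orbits}) for $\eta \ne 0$ with $\dim(\Sigma_0 \times \chi_0) = (2k-1) + (n-k) = \tfrac12(\dim \Sigma_0 + \dim \Sigma)$ for $\eta = 0$, this inclusion yields the lower bound $\dim \ker D\bar\partial \ge \tfrac12(\dim \Lambda_0 + \dim \Lambda)$.

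\textbf{Matching upper bound.} For the reverse inequality I would exploit the $s$-independence of the coefficients of $\mathbf{D}$ by separating variables: a general element of $\ker \mathbf{D}$ decomposes into eigenmodes of the asymptotic operator at $s = +\infty$. By the Morse--Bott property established in Section \ref{sec:Morse-Bott}, the $0$-eigenspace of this asymptotic operator is exactly $T_{x_0}\Lambda_0^{-\eta} \oplus T_{x_1}\Lambda$, with all remaining eigenvalues bounded away from $0$. The weighted Sobolev condition with weight $e^{\delta s}$ for $0 < \delta \ll 1$ selects only strictly decaying modes, and a Floer-type energy estimate on the half-cylinder---modeled on the computation in the proof of Lemma \ref{energy}, using the vanishing of the primitive $\lambda$ and of $H_1$ on the zero-section $\chi_0$ (Remark \ref{H2}\ref{DefnS})---shows that the boundary contribution at $s = 0$ vanishes on the Lagrangian $L$, forcing all such decaying modes to be trivial. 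Consequently $\ker D\bar\partial$ coincides with the subspace of $T_{x_0}\Lambda_0^{-\eta} \oplus T_{x_1}\Lambda$ that is compatible with the Lagrangian coupling at $s = 0$ and the constraint $\xi_1(0) = -\xi_0(0)$; this is precisely the tangent space of the stationary solution manifold, proving \ref{at1}.

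\textbf{Transversality and the main obstacle.} By Theorem \ref{thm-index} combined with $\tCZ(\Lambda^\eta) = \tCZ(\Lambda_0^\eta)$ from Proposition \ref{prop:muCZ} (and a short direct calculation in the $\eta = 0$ case), the Fredholm index of $D\bar\partial$ equals $\tfrac12(\dim \Lambda_0 + \dim \Lambda)$, matching the kernel dimension obtained above. Hence $\operatorname{coker} D\bar\partial = 0$, so $D\bar\partial$ is surjective, which proves \ref{at2}. The hard part will be the half-cylinder energy estimate used to eliminate the strictly decaying modes: one must carefully adapt the boundary-term vanishing mechanism from Lemma \ref{energy} to the linearised problem, checking that the Lagrangian coupling on $L$ together with $\xi_1(0) = -\xi_0(0)$ kills all cross terms at $s = 0$, so that no spurious kernel elements survive and the dimension count really is sharp.
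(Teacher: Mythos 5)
Your proposal is correct and follows essentially the same route as the paper: part \ref{at2} is deduced from part \ref{at1} by matching the kernel dimension against the Fredholm index $\tfrac12(\dim\Lambda_0+\dim\Lambda)$ from Theorem \ref{thm-index} and Proposition \ref{prop:muCZ}, and part \ref{at1} is proved by showing every kernel element reduces to its asymptotic limit in $T_{x_0}\Lambda_0^{-\eta}\oplus T_{x_1}\Lambda^\eta$ because the boundary contribution at $s=0$ vanishes under the Lagrangian coupling. The only difference is packaging: where you invoke an eigenmode decomposition of the $s$-independent asymptotic operator, the paper considers $\varphi(s)=\|(z,\zeta)(s)-(y_0,y_1)\|^2_{L^2(S^1)}$, notes $\varphi''\ge 0$ by self-adjointness of the Hessians, and then carries out explicitly the computation you flag as the remaining hard part --- namely that $\varphi'(0)=0$ because $\chi_0$ is Lagrangian with $H_1$, $dH_1$ and $\operatorname{Hess}H_1$ vanishing on $T\chi_0$ (using that $H_1$ is quadratic), while the two $T^*\R^k$-contributions cancel via the anti-symplectic involution $\rho$, $H_0\circ\rho=H_0$ and $\zeta_1(0)=-\zeta_0(0)$.
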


\begin{proof}[Proof of Proposition \ref{prop:trans}]
 The proof presented below is an adjustment of \cite[Lem. 4.14]{AbbonMerry2014} to our setting. Note that part \emph{\ref{at2}} is an immediate corollary of part \emph{\ref{at1}}. Indeed, by the correspondence \eqref{bijection}, we have that $\mathcal{M}_{\operatorname{hyb}}( \Lambda_0^\eta,\Lambda^\eta)$ is transversely cut out if and only if $\mathcal{M}^{\operatorname{hyb}}( \Lambda_0^{-\eta},\Lambda^\eta)$ is transversely cut out. The latter holds if and only if the operator $D\bar{\partial}$ is surjective at $(w,\xi)$. Now, $D\bar{\partial}$ is a Fredholm operator of index $\frac{1}{2}(\dim\Lambda+\dim\Lambda_0)$ by Theorem \ref{thm-index} (see also \eqref{dimM}), as $\tCZ(\Lambda_0)=\tCZ(\Lambda)$ by Proposition \ref{prop:muCZ}. Part \emph{\ref{at1}} tells us that $ \dim \ker D\bar{\partial} = \frac{1}{2}(\dim\Lambda+\dim\Lambda_0)$. Thus $D\bar{\partial}$ is surjective, as required.

Fix $(w,\xi) \in \mathcal{M}^{\operatorname{hyb}}( \Lambda_0^{-\eta},\Lambda^\eta)$ and let $(z, \zeta)\in \ker\big(D_{(w,\xi)}\bar{\partial}\big)$ be arbitrary, i.e.\ $(z,\zeta)$ is a solution to the linearised equation $D_{(w,\xi)}\bar{\partial}(z,\zeta)=0$. In the following, we write $w=(w_0,w_1,w_2)$ and $z=(z_0,z_1,z_2)$ with respect to the splitting $T^\ast\R^k\times T^\ast\R^n=T^\ast\R^k\times T^\ast\R^k\times T^\ast\R^{n-k}$. 
Denote $(x_0,x_1)\coloneqq \lim_{s\to+\infty}(w, \xi)(s)\in \Lambda_0^{-\eta}\times\Lambda^\eta$.
Then $(z,\zeta)\in\mathcal{W}^{1,r}_{ \delta, L} \oplus W^{1,r}_{\delta, \Delta}\oplus T_{x_0}\Lambda_0^{-\eta} \oplus T_{x_1}\Lambda^\eta$ and satisfies explicitly
\begin{equation}
    \label{b}
    \frac{d}{ds}(z_0,\zeta_0) + \nabla^2_{(w_0,\xi_0)} \mathcal{A}^{H_0}(z_0,\zeta_0) = 0, \qquad \frac{d}{ds}(z_1,z_2,\zeta_1) + \nabla^2_{(w_1,w_2,\xi_1)} \mathcal{A}^{H}(z_1,z_2,\zeta_1)  = 0,
\end{equation}
the coupling conditions
\begin{equation}
\label{c}
  \zeta_1(0)=-\zeta_0(0), \qquad \forall\ t \in S^1 \colon  \quad z_0(0,t)= D\rho[ z_1(0,t)],\quad z_2(0,t) \in T\chi_0,
\end{equation}
and the asymptotic conditions 
\begin{equation}
    \label{d}
    \lim_{s \to + \infty } (z, \zeta)(s) =(y_0,y_1)\in T_{x_0}\Lambda_0^{-\eta} \times T_{x_1}\Lambda^\eta.
\end{equation}
To prove part \emph{\ref{at1}}, we show that any solution $(z,\zeta)$ of $D_{(w,\xi)}\bar{\partial}(z,\zeta)=0$ is constant. Then, we have by \eqref{d} that $(z,\zeta)=(y_0,y_1)\in T_{x_0}\Lambda_0^{-\eta} \oplus T_{x_1}\Lambda^\eta$, which implies with the coupling condition \eqref{c} and with \eqref{eq:LambdaLambda} that the space of such $(z,\zeta)$ has dimension $\frac{1}{2}(\dim\Lambda+\dim\Lambda_0)$.

First recall that the second derivative of the functional $\mathcal{A}^{H}$ at a loop $(v,\eta)$, is a symmetric, bilinear operator on $W^{1,2}(S^{1},v^*T^*\R^n)\times \mathbb{R})$ given by:
\begin{equation}\label{d2AH}
 d^2_{(v,\eta)}\mathcal{A}^{H} ((\xi, \sigma),(\xi, \sigma)) = \int \omega(\xi,\partial_{t}\xi) - \eta\int \operatorname{Hess}_{v} H(\xi,\xi) -2\sigma\int dH(\xi).
\end{equation}
We define a function $\varphi \colon [0,+\infty)\to \R$ in the following way:
\[
\varphi(s)\coloneqq \big\|\big(z(s, \cdot),\zeta(s)\big)-(y_0,y_1)\big\|^2_{L^2(S^1)}.
\]
By assumption $\varphi \in L^r_\delta([0,+\infty),\R)$. As $\mathcal{A}^H$ and $\mathcal{A}^{H_0}$ are Morse-Bott (cf.\ \eqref{eq:MorseBott}), we have $(y_0,y_1)\in T_{x_0}\Lambda_0^{-\eta} \times T_{x_1}\Lambda^\eta=\ker \nabla^2_{x_0}\mathcal{A}^{H_0}\oplus\ker\nabla^2_{x_1}\mathcal{A}^H$ and hence\footnote{Observe that $(w_0,\xi_0)\equiv x_0$ and $(w_1,w_2,\xi_1)\equiv x_1$ for all $s\in [0,\infty)$.}
\begin{align*}
\varphi'(s) & = 2 \left( d^2_{(w_0,\xi_0)}\mathcal{A}^{H_0} ((z_0, \zeta_0),(z_0, \zeta_0)) + d^2_{(w_1,w_2,\xi_1)}\mathcal{A}^H ((z_1, z_2, \zeta_1),(z_1, z_2, \zeta_1)) \right), \\
\varphi''(s) & = 4 \left(\left\|\nabla^2_{(w_0,\xi_0)} \cA^{H_0}(z_0, \zeta_0)\right\|^2 + \left\|\nabla^2_{(w_1,w_2,\xi_1)} \cA^H(z_1,z_2, \zeta_1)\right\|^2\right)\geq 0.
\end{align*}
Thus on one hand $\varphi''(s)\geq 0$ and $\varphi$ is convex and on the other hand $\lim_{s\rightarrow\infty}\varphi (s)=0$. That implies that either $\varphi'(0)<0$ or $\varphi'(0)=0$ and $\varphi$ is constant, equal to 0 everywhere. We will show that $\varphi'(0)=0$, which implies that $(z,\zeta)$ is constantly equal to $(y_0,y_1)$. First, we show that
\begin{equation}\label{d2AH1=0}
d^2_{(w_2,\xi_1)}\mathcal{A}^{H_1} (( z_2, \zeta_1),(z_2, \zeta_1))\big|_{s=0}=0.
\end{equation}
Recall from the coupling conditions \eqref{eq-w-second} and \eqref{c} that $w_2(0,t)\in\chi_0$ and $z_2(0,t)\in T\chi_0$ for all $t\in S^1$. Writing \eqref{d2AH1=0} in the form \eqref{d2AH} as 3 integrals, we find that the first integral vanishes, as $\chi_0$ is a Lagrangian submanifold of  $\left(T^*\mathbb{R}^{n-k},\omega_{n-k}\right)$. On the other hand, for every $x\in \chi_0$ we can identify $T_x\chi_0$ with $\mathbb{R}^{n-k}\times\{0\}$ and therefore by \eqref{defH} for every $y\in T_x\chi_0$ we have
\[
\operatorname{Hess}_x H_1(y,y)=y^TA_1y=2 H_1(y)=0,
\]
since $\chi_0 \subseteq H_1^{-1}(0)$ by Remark \ref{H2}.
Consequently, the second integral in \eqref{d2AH} vanishes. Analogously, $T\chi_0\subseteq TH_1^{-1}(0)=\ker(dH_1)$, hence the third integral in \eqref{d2AH} also is $0$. This proves \eqref{d2AH1=0}. Next we will show that
\[
\left( d^2_{(w_0,\xi_0)} \mathcal{A}^{H_0} ((z_0, \zeta_0),(z_0, \zeta_0))  +d^2_{(w_1,\xi_1)}\mathcal{A}^H ((z_1, \zeta_1),(z_1, \zeta_1))\right)\Big|_{s=0}=0.
\]
 Observe that by the coupling conditions \eqref{eq-w-second} and \eqref{c} we have for all $t\in S^1$ that $w_0(0,t) = \rho \circ w_1(0,t)$ and $z_0(0,t) = D\rho[ z_1(0,t)]$ and hence
\[
 d^2_{(w_0,\xi_0)} \mathcal{A}^{H_0} ((z_0, \zeta_0),(z_0, \zeta_0))\Big|_{s=0} = d^2_{(\rho \circ w_1,-\xi_1)}\mathcal{A}^H ((D\rho[z_1], -\zeta_1),(D\rho[z_1], -\zeta_1))\Big|_{s=0}
\]
Thus, the first corresponding integrals in \eqref{d2AH} have opposite signs, since $\rho$ is anti-symplectic. The second integrals in \eqref{d2AH} have opposite signs, since $\xi_0(0)=-\xi_1(0)$ and $H_0 \circ \rho = H_0$ and thus $\operatorname{Hess} H_0(D\rho\, \cdot\,, D\rho\, \cdot\, )= \operatorname{Hess} H_0$. The third integrals  in \eqref{d2AH} have opposite signs, since $\zeta_0(0)=-\zeta_1(0)$ and for every $(x,y)\in T\R^k, \ dH_{\rho (x)}(D\rho[y])=dH_x(y)$. Consequently $\varphi'(0)=0$.
\end{proof}
\section{Computation of the Rabinowitz Floer homology}
\subsection{Building the isomorphism}
The main goal of this  section will be the proof of Theorem \ref{thm:iso}.
Throughout this section we will consider the following setting: let $H \colon T^*\R^n\to \R$ and $H_0 \colon T^*\R^k\to \R$ be Hamiltonians satisfying \eqref{defH} and let $J_0 \in \mathcal{J}_\star^k, J\in \mathcal{J}_\star^n$ be two $2$-parameter families of $\omega_0$-compatible almost complex structures, such that the couples $(H_0,J_0)$ and $(H,J)$ are regular in the sense of Definition \ref{def:regular}.

By Lemma \ref{lem:orbits}, the embedding $T^*\R^k\cong T^*\R^k\times \{0\}^{2(n-k)}\hookrightarrow T^*\R^n$ induces an inclusion\footnote{This is not to be confused with the inclusion $j$ from \eqref{eqj}.} 
\begin{equation}
    \label{eqi}
    i \colon \operatorname{Crit} (\mathcal{A}^{H_0}) \hookrightarrow \operatorname{Crit} (\mathcal{A}^{H}),
\end{equation}
 such that its restriction to $\Crit(\cA^{H_0})\big\backslash (\Sigma_0{\times}\{0\})$ is a diffeomorphism. By Lemma \ref{lem:orbits} and Proposition \ref{prop:muCZ} the inclusion $i \colon  \Crit(\cA^{H_0}) \hookrightarrow \Crit(\cA^H)$ is both action and degree preserving\footnote{with respect to the transverse Conley-Zehnder index.}.

By Remark \ref{rem:Sk} for each connected component $\Lambda \subseteq\operatorname{Crit}(\cA^H)\big\backslash (\Sigma{\times}\{0\})$ there exists $m\in \{1, \dots, k\}$, such that $\Lambda$ is diffeomorphic to $S^{2m-1}$.  By Remark \ref{H2}, we have that $\Sigma_0\times\{0\}\subseteq \Crit(\cA^{H_0})$ is diffeomorphic to $S^{2k-1}$, while $\Sigma\times\{0\}\subseteq \Crit(\cA^H)$ is diffeomorphic to $S^{n+k-1}{\times}\R^{n-k}$. Moreover this diffeomorphism can be chosen such that if $ \Sigma_1 \subset \Sigma$ is the sphere of dimension $n+k-1$ corresponding to $ S^{n+k-1} \times \{0\}$ then $\Sigma_0 \subset \Sigma_1$. As in \eqref{eqj} we denote by $j \colon \Sigma_0 \hookrightarrow \Sigma_1$ the inclusion. Thus the inclusion $i$ from \eqref{eqi} satisfies
\[
i(x,0) = (j(x),0), \qquad \forall (x,0) \in \Sigma_0 \times \{0 \}.
\]
Therefore, we can choose a Morse-Smale pair $(f,g)$ on $\Crit (\cA^H)$, such that:
\begin{enumerate}[label=\roman*)]
\item $f$ is coercive;
\item  $\operatorname{Crit}(f)\cap\Lambda=\{z^-,z^+\}$ for each connected component $\Lambda\subset\Crit (\cA^H)$;
\item \label{cond4}
$(f_0,g_0)$ is a Morse-Smale pair on  $\Crit(\cA^{H_0})$, such that $(f_0,g_0):=(f \circ i, i^*g)$ on $\Crit(\cA^{H_0})\big\backslash(\Sigma_0{\times}\{0\})$;
\item for $\mathcal{A}^H(z^\pm)=0$ we have $W^s_f(z^\pm) \subseteq \Sigma_1{\times}\{0\}$,  $z^+ \in i(\Sigma_0)$ and $z^- \notin i(\Sigma_0)$;
\item $\Crit(f_0)\cap (\Sigma_0 {\times} \{0\})=\{x^-,x^+\}$ and $i(x^+)=z^+$ and $i(x^-) \notin \Crit(f)$.
\end{enumerate}
We denote by $x^-$ or $z^-$ always the minimum of $f_0$ or $f$ on a connected component $\Lambda_0$ of $\operatorname{Crit}(\mathcal{A}^{H_0})$ or $\Lambda$ of $\operatorname{Crit}(\mathcal{A}^{H})$ respectively. In the following, let $x^\pm$ or $z^\pm$ be the two critical points of $f_0$ or $f$ belonging to the same components $\Lambda_0$ or $\Lambda$ respectively. From the assumptions above, we can conclude the following:
\begin{enumerate}[label=\alph*)]
\item $\operatorname{Crit}(f_0)\cap\Lambda_0=\{x^\pm\}$ for all connected components $\Lambda_0 \subset \operatorname{Crit}(\mathcal{A}^{H_0})$.
\item The restriction $i \colon  \Crit(f_0)\big\backslash(\Sigma_0 {\times} \{0\}) \to \Crit(f)\big\backslash(\Sigma {\times} \{0\})$ is a bijection with $i(x^\pm)=z^\pm$.
\item The signature index of $x^{\pm}$ on $\Lambda_0 \cong S^{2m-1}$ is
\begin{equation}\label{morse-a}
\mu_\sigma(x^-) = - m + \frac{1}{2}, \qquad \mu_{\sigma}(x^+) = m - \frac{1}{2}.
\end{equation}
\item For $\eta\neq 0$, the signature index of $z^{\pm}$ on $\Lambda \cong S^{2m-1}$ is given by
\begin{equation}\label{morse-b}
\mu_\sigma(z^-) = - m + \frac{1}{2}, \qquad \mu_{\sigma}(z^+) = m - \frac{1}{2}.
\end{equation}
\item For $\eta=0$, we have $z^\pm\in\Sigma_1{\times}\{0\}\subset\Lambda=\Sigma{\times}\{0\}$ and $z^+$ is the maximum of $f|_{\Sigma_1}$ on $\Sigma_1\cong S^{n+k-1}$. The signature index of $z^{\pm}$ in this case is given by
\begin{equation}\label{morse-c}
\mu_\sigma(z^-) = - n + \frac{1}{2}, \qquad \mu_{\sigma}(z^+) = k - \frac{1}{2}.
\end{equation}
\end{enumerate}
We will construct the isomorphism from Theorem \ref{thm:iso} via moduli spaces of cascades with solutions to the hybrid problem \eqref{Floer}-\eqref{coupling} defined as follows:

For a pair $(x,z)\in \Crit(f_0)\times \Crit(f)$ and $m\in\N$ we denote by $\cM^m_{\operatorname{hyb}}(x,z)$ the set consisting of sequences $\left(\{u_l\}_{l=1}^m,\{t_l\}_{l=1}^{m-1}\right)$, such that exactly one $u_{l_0}$ in the sequence is a solution to the hybrid problem, whereas all other $u_l$ are Floer trajectories of $\cA^{H_0}$ for $l<l_0$ or of $\cA^H$ for $l>l_0$. We require 
\begin{align*} 
\phi^{t_l}\circ \operatorname{ev}^+(u_l)=\operatorname{ev}^-(u_{l+1})\qquad l=1,...,m-1,\\
\ev^-(u_1)\in W^u_{f_0}(x) \qquad \textrm{and}\qquad \ev^+(u_m)\in W^s_f(z),
\end{align*}
where $t_l\geq 0$ are real numbers and $\phi^{t_l}$ is for $l<l_0$ the time $t_l$ gradient flow of $(f_0,g_0)$ on $\Crit(\cA^{H_0})$ and for $l\geq l_0$ the time $t_l$ gradient flow of $(f,g)$ on $\Crit(\cA^{H})$.

\begin{remark}\label{rem:Mmanifold}
For $m>1$ the group $\R^{m-1}$ acts by time shift on Floer trajectories of $\cM_{\operatorname{hyb}}^m(x,z)$. We consider the quotients of $\cM_{\operatorname{hyb}}^m(x,z)$ by this action and define
\[
\cM_{\operatorname{hyb}}(x,z)\coloneqq \cM_{\operatorname{hyb}}^1(x,z)\cup \bigcup_{m>1}\left(\cM_{\operatorname{hyb}}^m(x,z)\left/\R^{m-1}\right.\right).
\]
$\mathcal{M}_{\operatorname{hyb}}(x,z)$ carries the structure of a smooth manifold by Theorem \ref{thm-index} and Proposition \ref{prop:trans} together with the standard Floer-theoretical results (cf.\ \cite[Prop.\ 1b]{Floer1989}, \cite[Thm.\ 9.2.3]{Audin2014}, \cite[Sec.\ 2.4]{Fauck2015}, \cite[Cor.\ A.15]{Frauenfelder2004},  \cite[Thm.\ 4.2]{pasquotto2018}, \cite[Sec.\ 2.3]{AbbonMerry2014}). Its dimension is $\mu(z)-\mu(x)$.
\end{remark}
If $\mathcal{A}^{H_0}(x)=\eta=\mathcal{A}^H(z)$, $\Lambda\coloneqq \Lambda^\eta$ and $\Lambda_0\coloneqq \Lambda^\eta_0$, we can describe $\mathcal{M}_{\operatorname{hyb}}(x,z)$ more explicitly. As the action decreases along non-trivial Floer trajectories, we find that in this case $\mathcal{M}_{\operatorname{hyb}}^m(x,z)=\emptyset$ for $m>1$. Thus $\mathcal{M}_{\operatorname{hyb}}(x,z)=\mathcal{M}_{\operatorname{hyb}}^1(x,z)$ consists entirely of stationary solutions. More precisely, we find that $\mathcal{M}_{\operatorname{hyb}}(x,z)$ is given by the fibre product
\[
\xymatrix{
\mathcal{M}_{\operatorname{hyb}}(x,z) \ar@{.>}[r] \ar@{.>}[d] & \mathcal{M}_{\operatorname{hyb}}( \Lambda_0 ,  \Lambda) \ar[d]^{\operatorname{ev}} \\
W^u_{f_0}(x) \times W^s_f(z) \ar[r]^{\qquad \iota} & \Lambda_0 \times \Lambda}
\]
where $\iota$ is the inclusion and $\operatorname{ev} \colon \mathcal{M}_{\operatorname{hyb}}(\Lambda_0,\Lambda)\to\Lambda_0\times\Lambda$ the evaluation map:
\[\operatorname{ev}(u_0,u)=\big(\lim_{s\rightarrow-\infty} u_0(s),\lim_{s\rightarrow+\infty} u(s)\big).\]
Hence $\mathcal{M}_{\operatorname{hyb}}(x,z)=\operatorname{ev}^{-1}\big(W^u_{f_0}(x) \times W^s_f(z)\big)$ is a manifold of dimension
\begin{align}
\dim \mathcal{M}_{\operatorname{hyb}}(x,z) & =
\dim W^u_{f_0}(x)+\dim W^s_f(z)+\dim \mathcal{M}_{\operatorname{hyb}}( \Lambda_0 ,  \Lambda)-\dim\Lambda_0-\dim\Lambda\nonumber\\
& = \dim W^u_{f_0}(x)+\dim W^s_f(z)-\frac{1}{2}\left(\dim\Lambda_0+\dim\Lambda\right)\nonumber\\
& = \mu_\sigma(z)-\mu_\sigma(x).
\label{dim:Wxz}
\end{align}
provided the following intersection is transverse in $ \Lambda_0 \times \Lambda$:
\begin{equation}\label{eq:transWxz}
W^u_{f_0}(x) {\times} W^s_f(z)\cap {\operatorname{ev}}\big(\mathcal{M}_{\operatorname{hyb}}( \Lambda_0 ,  \Lambda)\big).
\end{equation}
For $\eta\neq 0$, we have $\Lambda_0\cong \Lambda$, so that ${\operatorname{ev}}\big(\mathcal{M}_{\operatorname{hyb}}( \Lambda_0 ,  \Lambda)\big)= \Delta$ is the diagonal in $\Lambda_0\times\Lambda_0$ by Lemma \ref{lem:stationary}. On the other hand, by our assumptions on $f_0$ and $f$ we have that $i(W^u_{f_0}(x))=W^u_f(i(x))$ and consequently the transversality of \eqref{eq:transWxz} is equivalent to the transversality of $W^u_f(i(x))\pitchfork W^s_f(z)$, which follows from the Morse-Smale assumption on the flow of $\nabla f$.

When $\eta = 0$, we have by Lemma \ref{lem:stationary} that
\[
Y \coloneqq \operatorname{ev}\big( \mathcal{M}_{\operatorname{hyb}}(\Lambda_0, \Lambda \big) \cong  \Delta \times \chi_0,
\]
where $ \Delta$ is the diagonal in $\Lambda_0{\times}\Lambda_0\subseteq\Lambda_0{\times}\Lambda$ and $\chi_0$ is the zero section in $T^\ast\R^{n-k}$. 
In particular the transversality of \eqref{eq:transWxz} is equivalent to the transversality of
\begin{equation}\label{transWxz2}
\sigma^{-1}\big(W^u_{f_0}(x){\times} \chi_0\big) \cap W^s_f(z),
\end{equation}
in $\Sigma$.
By Remark \ref{H2}, we have
\begin{align*}
\sigma^{-1}(\Sigma_0{\times} \chi_0) \cap \Sigma_1 & = \sigma^{-1}(\Sigma_0{\times} \chi_0) \cap \Sigma \cap \sigma^{-1}(T^*\R^k{\times} S)\\
& = \Sigma \cap \sigma^{-1}(\Sigma_0{\times} (\chi_0 \cap S))
&&\mspace{-20 mu}=\Sigma \cap \sigma^{-1}(\Sigma_0{\times} \{0\}) =i(\Sigma_0).
\end{align*}
By assumption $z^- \notin i(\Sigma_0)$ and $W^s_f(z^-)=\{z^-\}$, which in view of the above gives:
$$
\sigma^{-1}\big(W^u_{f_0}(x^\pm){\times} \chi_0\big) \cap W^s_f(z^-)=\emptyset\quad \textrm{and} \quad \left(W^u_{f_0}(x^\pm)\times W^s_f(z^-)\right)\cap Y=\emptyset.
$$
Thus \eqref{eq:transWxz} is transverse for $(x,z)=(x^\pm,z^-)$.

On the other hand, by assumption $W^s_f(z^+)= \Sigma_1\setminus \{z^-\}\supseteq i(\Sigma_0)$. Thus
$$
\sigma^{-1}\big(W^u_{f_0}(x^\pm){\times} \chi_0\big) \cap W^s_f(z^+)\supseteq i(W^u_{f_0}(x^\pm))\neq \emptyset.
$$
Take $w \in \sigma^{-1}\big(W^u_{f_0}(x^\pm){\times} \chi_0\big) \cap W^s_f(z^+)$ and denote $\sigma(w)=(w_1,w_2)$. Then
\begin{align*}
T_w W^s_f(z^+) = T_w\Sigma_1 & = T_w\Sigma\cap D\sigma^{-1}(T_{w_1}T^*\R^k \times T_{w_2}S),\\
T_w \sigma^{-1}\big(W^u_{f_0}(x^\pm){\times} \chi_0\big) & = D\sigma^{-1}\big(T_{w_1}W^u_{f_0}(x^\pm)\times T_{w_2}\chi_0\big).
\end{align*}
By Remark \ref{H2} $\dim S= n-k = \dim \chi_0$ and $S\cap \chi_0=\{0\}$, hence $\operatorname{span}\{S, \chi_0\}=\R^{2(n-k)}$ and consequently the intersection $\sigma^{-1}(W^u_{f_0}(x^\pm)\times \chi_0) \cap W^s_f(z^+)$ is transverse.

\begin{remark}\label{rem:Mxz=1}
In all the above cases for $\mathcal{A}^H(z)=\mathcal{A}^{H_0}(x)$ and $\mu_{\sigma}(z)=\mu_\sigma(x)$, we find that $\mathcal{M}_{\operatorname{hyb}}(x,z)=\{(x,z)\}$ contains exactly one element.
\end{remark} 

In order to use $\mathcal{M}_{\operatorname{hyb}}(x,z)$ for the definition of $\psi$, one has to show that it is compact modulo breaking. This follows mostly by standard techniques via Gromov compactness. However, as we are on a non-compact manifold, we need to assure that all moduli spaces $\mathcal{M}_{\operatorname{hyb}}(x,z)$ are uniformly bounded in the $L^\infty$-norm.
\begin{lem}\label{lem:compact}
For every pair $(x,z)\in \Crit(f_0)\times \Crit(f)$ all Floer and hybrid trajectories from the set $\cM_{\operatorname{hyb}}(x,z)$ are uniformly bounded in the $L^\infty$-norm and the bound depends only on $x$ and $y$.
\end{lem}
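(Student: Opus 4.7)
The plan is to exploit the layered structure of any element $\big(\{u_l\}_{l=1}^m,\{t_l\}_{l=1}^{m-1}\big) \in \cM_{\operatorname{hyb}}(x,z)$: a single hybrid segment $u_{l_0}$ preceded by Floer trajectories of $\cA^{H_0}$ and followed by Floer trajectories of $\cA^{H}$, with Morse segments of $(f_0,g_0)$ and $(f,g)$ in between. First I would extract uniform action bounds. The positive $L^2$ gradient flow causes $\cA^{H_0}$ and $\cA^H$ to be non-decreasing along each $u_l$; the Morse segments preserve the action because $\mathcal{A}$ is constant on every connected component of the critical sets (cf.\ Remark \ref{rem:orbits}); and Lemma \ref{energy} gives $\cA^H(\ev^+(u_{l_0})) - \cA^{H_0}(\ev^-(u_{l_0})) \ge 0$ across the hybrid bridge. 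Consequently every asymptotic action value lies in $[a,b]$ with $a \coloneqq \cA^{H_0}(x)$ and $b \coloneqq \cA^H(z)$, and Corollary \ref{cor:CritVal} confines these values to a finite subset of $\operatorname{CritVal}$, which in turn bounds the length $m$ of the cascade purely in terms of $x$ and $z$.

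Next I would show that all asymptotic data lie in a compact subset of the critical sets. For every $\eta \in [a,b]\setminus\{0\}$, Remark \ref{rem:Sk} yields that $\Lambda_0^\eta$ and $\Lambda^\eta$ are spheres and hence compact, and the action-zero component $\Sigma_0\times\{0\}\subseteq\Crit(\cA^{H_0})$ is also compact. The delicate point is the non-compact component $\Sigma\times\{0\}\subseteq\Crit(\cA^H)$, which only enters the picture if $0\in[a,b]$. Here coercivity of $f_0$ and $f$ ensures that the initial asymptote $\ev^-(u_1)\in W^u_{f_0}(x)$ and the terminal asymptote $\ev^+(u_m)\in W^s_f(z)$ lie in compact sublevel sets, and an inductive argument along the cascade — alternately using the compact-asymptote Floer estimates of \cite[Prop.\ 6.2]{pasquotto2017} and the fact that every $\phi^{t_l}$ is the gradient flow of a coercive function and therefore cannot escape a fixed compact sublevel set — confines every intermediate $\ev^\pm(u_l)$ on $\Sigma\times\{0\}$ to a common compact set $N\subseteq\Sigma$ depending only on $x$ and $z$.

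With these reductions in hand, the bound on $u_{l_0}$ is immediate from Proposition \ref{prop:hybBound} applied with the above $a$, $b$ and $N$. The Floer trajectories $u_l$ with $l<l_0$ live in the compact Rabinowitz--Floer framework of $\cA^{H_0}$ on $\Sigma_0$, where uniform $L^\infty$-bounds are classical; for $l>l_0$ the bounds follow from the strongly tentacular compactness results of \cite[Prop.\ 6.2]{pasquotto2017} together with the maximum-principle argument \cite[Thm.\ 7.1]{pasquotto2017} and the refinements in \cite{pasquotto2018}, all now applicable because both asymptotes of each $u_l$ have been trapped in the compact data identified above. Taking the maximum over the finitely many pieces delivers the claimed uniform $L^\infty$-bound.

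The main obstacle I anticipate is step two: verifying that intermediate asymptotes on the non-compact component $\Sigma\times\{0\}$ really are confined to a single compact subset depending only on $x$ and $z$. Since this component can in principle be traversed by Morse segments of arbitrarily long duration, and since different cascades may cross it at very different points, the confinement must be argued inductively — at each stage using both the energy/action trade-off from Lemma \ref{energy} (which caps the $L^2$-energy of any Floer piece in terms of its asymptotic action levels) and coercivity of $f$ (which prevents the gradient flow from escaping sublevel sets). Once this is properly set up, the remaining analysis is a routine assembly of previously established $L^\infty$-estimates.
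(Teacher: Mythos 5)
Your proposal is correct and follows essentially the same route as the paper: restrict to the action window $[a,b]$, use finiteness of $\operatorname{CritVal}\cap[a,b]$ and the compactness of all nonzero-action components, invoke the $L^\infty$-bounds of \cite{pasquotto2017} for the Floer pieces and Proposition \ref{prop:hybBound} for the hybrid piece, and control the asymptotes landing on the non-compact component $\Sigma\times\{0\}$ via coercivity of $f$. The paper merely packages your inductive confinement step as an explicit three-case choice of the compact set $N$ (namely $\emptyset$, the sublevel set $f^{-1}((-\infty,f(z)])$, or the ``shade'' $K(b)$ of \cite[Lem.\ 4.1]{pasquotto2018}, which is exactly the compact set your backward induction through the cascade produces).
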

\begin{proof}
Let $a\coloneqq \cA^{H_0}(x)$ and $b\coloneqq \cA^H(z)$.
Denote the moduli spaces of Floer trajectories of $\cA^{H_0}$ and $\cA^H$ in the action window $[a,b]$ as follows:
\begin{align*}
\mathfrak{M}_{0}(a,b) & \coloneqq \bigcup_{\substack{\Lambda^{\pm}\subseteq \Crit\left(\cA^{H_{0}}\right)\\  \cA^{H_{0}}\left(\Lambda^{\pm}\right)\in [a,b]}}\mathcal{M}(\Lambda^{-},\Lambda^{+}),\allowdisplaybreaks\\
 \mathfrak{M}_{1}(a,b) & \coloneqq \bigcup_{\substack{\Lambda^{\pm}\subseteq \Crit\left(\cA^H\right)\\ \cA^H\left(\Lambda^{\pm}\right)\in [a,b]}}\mathcal{M}(\Lambda^{-},\Lambda^{+}).
\end{align*}
By Corollary \ref{cor:CritVal} we know that the set $\operatorname{CritVal}(\cA^H)\cap[a,b]=\operatorname{CritVal}(\cA^{H_0})\cap[a,b]$ is finite. Combined with \cite[Thm. 1]{pasquotto2017} we infer that $\mathfrak{M}_{0}(a,b)$ and $\mathfrak{M}_{1}(a,b)$ are finite unions of sets whose images in $\R^{2k+1}$ and $\R^{2n+1}$ respectively are bounded, thus their images are bounded.

Using the notation from section \ref{ssec:bounds} we denote for a compact subset $N\subseteq \Sigma$ the moduli spaces of the solutions to \eqref{Floer} and \eqref{coupling} connecting components of the critical set of $\cA^{H_{0}}$ in action window $[a,b]$ with components of the critical set of $\cA^H$ in the same action window as follows:
\[
\mathfrak{M}_{2}(a,b)\coloneqq  \bigcup_{\substack{\Lambda_0\subseteq \Crit\left(\cA^{H_{0}}\right)\\ \cA^{H_{0}}\left(\Lambda_0\right)\in [a,b]}}\ \bigcup_{\substack{\Lambda \subseteq \mathcal{C}\left(\cA^H,N\right)\\ \cA^{H}\left(\Lambda\right)\in [a,b]}}\mathcal{N}_{\operatorname{hyb}}(\Lambda_0 , \Lambda).
\]
Again, the set $\mathfrak{M}_{2}(a,b)$ is a finite union of sets, which by Proposition \ref{prop:hybBound} are all bounded in $L^\infty$-norm, thus its image is also bounded in $L^\infty$-norm and the bound depends only on $a, b$ and $N$.

Fix $m\in \N$  and take $\left(\{u_l\}_{l=1}^m,\{t_l\}_{l=1}^{m-1}\right)\in \cM_{\operatorname{hyb}}^m(x,z)$. Then by definition of $\cM_{\operatorname{hyb}}^m(x,z)$ there exists $l_0\in \{1,\dots, m\}$, such that $u_{l_0}=(u^-,u^+)$ is a solution to the hybrid problem \eqref{Floer} and \eqref{coupling}. By Lemma \ref{energy} we have that
\[
a\leq \cA^{H_0}\circ \ev^-(u^-)\leq \cA^H\circ \ev^+(u^+)\leq b.
\]
From the above inequality we can conclude that for all $l\in \{1,\dots, l_0-1\}$ the corresponding $u_l$ is a Floer trajectory of $\cA^{H_0}$, with 
\[
a\leq \cA^{H_0}\circ \ev^-(u_l)\leq \cA^{H_0}\circ \ev^+(u_l)\leq \cA^{H_0}\circ \ev^-(u^-)\leq b,
\]
and therefore $u_l\in \mathfrak{M}_{0}(a,b)$, whereas for all $l\in \{l_0+1,\dots, m\}$ we have $u_l\in \mathfrak{M}_{1}(a,b)$ as these $u_l$ are Floer trajectories of $\cA^H$ with 
\[
b\geq \cA^H\circ \ev^+(u_l) \geq \cA^H\circ \ev^-(u_l)\geq \cA^H\circ \ev^+(u^+)\geq a.
\]
Now we consider three cases, choosing the compact set $N$ appropriately to each case:
\begin{enumerate}[label=\arabic*.]
\item If $0\notin [a,b]$ then none of the cascades passes through $\Sigma$; thus we can choose $N=\emptyset$ and for $u_{l_0}$ to be in $\mathfrak{M}_{2}(a,b)$.
\item If $b=0$ then $\ev^+(u_m)\in \Sigma{\times}\{0\}$ and $\ev^+(u_m)\in W^s_f(z)$. In particular, $\ev^+(u_m)\in f^{-1}\big((-\infty,f(z)]\big)$. Therefore, for $b=0$ we take $N\coloneqq f^{-1}\big((-\infty,f(z)]\big)$. Due to coercivity of $f$, this $N$ is compact. That ensures $\ev^+(u_{l_0})\in \mathcal{C}\left(\cA^H,N\right)$ and $u_{l_0} \in \mathfrak{M}_{2}(a,b)$.
\item If $0 \in [a,b)$ then take $N\coloneqq K(b)$ to be the \emph{shade} (cf.\ \cite[Sec.\ 4.1]{pasquotto2018}), i.e.
\begin{align*}
\widetilde{K}(b) &\coloneqq \bigcup_{\substack{\Lambda \subseteq  \Crit({\cA^{H}}),\\ {\cA^{H}}(\Lambda)\in (0,b]}}\ev^{-}(\mathcal{M}(\Sigma\times\{0\},\Lambda)),\\
K(b) &\coloneqq  f^{-1}\Big(\big(-\infty,\,{\textstyle \sup_{\tilde{K}(b)}f}\big] \Big).
\end{align*}
By \cite[Lem.\ 4.1]{pasquotto2018} the set $K(b)$ is a compact subset of $\Sigma{\times}\{0\}$. Moreover, if $\ev^+(u_{l_0})\in \Sigma{\times}\{0\}$, then $\ev^-(u_{l_0+1})\in \widetilde{K}(b)$ and consequently $\ev^+(u^+)=\phi^{-t_{l_0}}\circ \ev^-(u_{l_0+1})\in K(b)$. Thus taking $N\coloneqq K(b)$ ensures $u_{l_0}\in \mathfrak{M}_{2}(a,b)$.
\end{enumerate}
In all the above cases we have chosen $N$ to be a compact set, such that $u_{l_0}\in \mathfrak{M}_{2}(a,b)$. Consequently all elements of $\cM_{\operatorname{hyb}}^m(x,z)$ are bounded in the $L^\infty$-norm and the bounds depend only on $x$ and $z$. 
\end{proof}

\paragraph{\textit{Proof of Theorem \ref{thm:iso}:}}
Let $H=H_0+H_1$ and $f$ and $f_0$ be as described at the beginning of this section. Let $CF_*(H_0,f_0)$ and $CF_*(H,f)$ be the chain complexes associated to the quadruples $(H_0,J_0,f_0,g_0)$ and $(H,J,f,g)$, respectively, as defined in Section \ref{section_setting}. Now we define the chain map $\psi \colon  CF_*(\cA^{H_0},f_0) \to CF_*(\cA^H,f)$.

 For a given pair of points $(x,z)\in \Crit(f_0)\times \Crit(f)$ such that $\mu(x)=\mu(z)$ we find by Remark \ref{rem:Mmanifold} that $\cM_{\operatorname{hyb}}(x,z)$ is a 0-dimensional manifold. It is compact by Lemma \ref{lem:compact}, and hence a finite set. Denote its parity by
\begin{equation}
n(x,z)\coloneqq \#_2 \cM_{\operatorname{hyb}}(x,z) \in \Z_2.
\label{n(x,z)}
\end{equation}
Define the homomorphism $\psi \colon CF_*(H,f)\to CF_*(H_0,f_0)$ as the linear extension of:
\[
\psi(z)\coloneqq \sum_{\substack{x\in \Crit(f_0),\\ \mu(x)=\mu(z)}}n(x,z)x.
\]
For $\psi$ to be well defined, it has to satisfy the Novikov finiteness condition \eqref{novikov}, i.e.\ that for all $z\in \Crit(f)$ and $a\in \R$ holds
\begin{equation}\label{Nov}
\#\left\lbrace x\in \Crit(f_0)\ \big|\ n(x,z)\neq 0 \quad \textrm{and}\quad \cA^{H_0}(x)\geq a\right\rbrace<+\infty.
\end{equation}
By Lemma \ref{energy} the condition $n(x,z)\neq 0$ implies $\cA^{H_0}(x)\leq \cA^H(z)$. In other words, using the notation from \eqref{CFt}, for every $t\in \R$
\begin{equation}\label{psiCF}
\psi\left( CF^{\leq t}_*\left(H,f\right)\right)\subseteq CF^{\leq t}_*\left(H_0,f_0\right).
\end{equation}
On the other hand, all connected components of $\Crit(\cA^{H_0})$ are compact and $f_0$ is Morse, hence $\Crit(f_0)\cap \left(\cA^{H_0}\right)^{-1}\left([a,\cA^H(z)]\right)$ is finite and \eqref{Nov} is satisfied.

Lemma \ref{lem:compact} and Theorem \ref{thm-index} together with the standard gluing and compactness arguments (cf.\ \cite[Thm.\ 4.2, Thm.\ 11.1.16]{Audin2014}, \cite[Thm.\ A.11]{Frauenfelder2004}, \cite[Sec.\ 4.2]{AbbonMerry2014}) imply that $\psi$ commutes with the respective boundary operators, i.e.\ $\psi\circ\partial=\partial_0\circ\psi$, and thus induces a homomorphism 
\[
\Psi \colon  RFH_*(H)\to RFH_*(H_0).
\]
Recall the short exact sequence (\ref{ActionShortExSeq}), which was induced by action filtration:
\[0\rightarrow CF^0(H,f)\rightarrow CF^{\geq 0}(H,f)\rightarrow CF^+(H,f)\rightarrow 0.\]
Note that there is an analogous sequence for $(H_0,f_0)$. As $\psi$ reduces action (cf.\ \eqref{psiCF}) and commutes with $\partial$ and $\partial_0$, we find that it induces maps on the filtered chain complexes which fit into the following commutative diagram of complexes:
\[\xymatrix{0 \ar[r] & CF^0_\ast(H,f)\ar[d]^{\psi^0} \ar[r] & CF^{\geq0}_\ast(H,f)\ar[d] \ar[r] & CF^+_\ast(H,f)\ar[d]^{\psi^+} \ar[r] & 0\\
0 \ar[r] & CF^0_\ast(H_0,f_0) \ar[r] & CF^{\geq0}_\ast(H_0,f_0) \ar[r] & CF^+_\ast(H_0,f_0) \ar[r] & 0.}\]
By naturality (cf.\ \cite[Sec.\ 2.1]{hatcher2000}), we hence obtain the following commutative diagram of long exact sequences in homology:
\[\xymatrix{
\longrightarrow H_{\ast+ n-1}(\Sigma) \ar[d]^{\Psi^0} \ar[r] & RFH^{\geq0}_\ast(H) \ar[d]\ar[r] & RFH^+_\ast(H) \ar[d]^{\Psi^+} \ar[r] & H_{\ast+n-2}(\Sigma) \ar[d]^{\Psi^0}\longrightarrow\\
\longrightarrow H_{\ast+ k-1}(\Sigma_0) \ar[r] & RFH^{\geq0}_\ast(H_0) \ar[r] & RFH^+_\ast(H_0) \ar[r] & H_{\ast+k-2}(\Sigma_0) \longrightarrow.}\]
It remains to show that $\Psi^+$ is an isomorphism and that $\Psi^0 \colon H_{n+k-1}(\Sigma)\rightarrow H_{2k-1}(\Sigma_0)$ is an isomorphism and vanishes otherwise:

Recall that by Lemma \ref{lem:orbits} we have a natural bijection between $\Crit^+(\cA^H)$ and $\Crit^+(\cA^{H_0})$. Moreover, we defined  $f_0$ to be the pullback of $f$ under this bijection, such that their critical points are in $1$-to-$1$ correspondence, i.e.\ $(v,\eta)\in \Crit^+(f_0)$ if and only if $(\sigma^{-1}(v,0),\eta)\in \Crit^+(f)$. This correspondence allows us to represent the homomorphism $\psi^+$ as an infinite matrix with entries 
\[
n(x_1,i(x_2))\coloneqq \begin{cases}\#_2 \cM_{\operatorname{hyb}}(x_1,i(x_2)) & \text{if }\mu(x_1)=\mu(i(x_2))\\ 0 &\text{otherwise}\end{cases}
\] 
defined as in \eqref{n(x,z)} for $x_1,x_2\in \Crit(\cA^{H_0})$. In fact by \eqref{psiCF} this matrix is upper triangular. We would like to investigate its diagonal.
 
Fix $x\in \Crit(f_0)\cap \Crit^+(\cA^{H_0})$.
By Proposition \ref{prop:muCZ} we know that the inclusion $i \colon  \Crit(\cA^{H_0}) \hookrightarrow \Crit(\cA^H)$ preserves the Conley-Zehnder index, i.e.\ $\mu_{CZ}(x)=\mu_{CZ}(i(x))$. On the other hand, since $\Crit^+(\cA^{H_0})$ is diffeomorphic to $\Crit^+(\cA^H)$ with $f_0$ being the pullback of $f$ under this diffeomorphism, we infer that $\mu_\sigma(x)=\mu_\sigma(i(x))$. 
Consequently, $\mu(x)=\mu(i(x))$ and $n(x, i(x))=\#_2 \cM_{\operatorname{hyb}}(x,i(x)) =1$ by Remark \ref{rem:Mxz=1}.

We can conclude that the matrix representing $\psi^+$ is upper triangular with $1$'s on the diagonal and therefore $\psi^+$ is an isomorphism and induces on homology level the isomorphism $\Psi^+$.

For $\Psi^0$ recall from the beginning of this section, that we have four action zero critical points $\{x^\pm\}\in\Lambda^0_0\subseteq\Crit(\cA^{H_0})$ and $\{z^\pm\}\in\Lambda^0\subseteq \Crit(\cA^H)$. All four points correspond to constant orbits of constant flows. Hence their Conley-Zehnder index is zero. From the calculations of the signature indexes \eqref{morse-a} and \eqref{morse-c}, we infer that
\[
\mu(z^+)= \mu(x^+)=k,\quad \mu(z^-)=-n+1,\quad \mu(x^-)=-k+1.
\]
Thus, we find that $n(x^+,z^+)=\#_2\mathcal{M}_{\operatorname{hyb}}(x^+,i(x^+))=1$ by Remark \ref{rem:Mxz=1} and that for any other pair $(x,z)\in \{(x^-,z^-),(x^-,z^+),(x^+,z^-)\}$ we have
$n(x,z)=0$, as $\mu(x)\neq\mu(z)$. Hence, $\Psi^0_k \colon  H_{n+k-1}(\Sigma)\rightarrow H_{2k-1}(\Sigma_0)$ is an isomorphism and $\Psi^0_\ast=0$ for $\ast\neq k$.

Finally, recall from \eqref{umkehr} that the map $H_*( \Sigma) \to H_{*+k-n}( \Sigma_0)$ given by composing the Umkehr map of the inclusion $ j \colon \Sigma_0 \hookrightarrow \Sigma_1$ together with a retraction $r \colon \Sigma \to \Sigma_1$ is an isomorphism for $\ast= n+k-1$ and $0$ otherwise. Therefore up to a degree shift coming from the difference between the signature index grading and the Morse grading, $\Psi^0$ agrees\footnote{This can also be seen directly from definition of $ \psi^0$, using the Morse-theoretic description of the Umkehr map from \cite[App. B]{Abbon2009} or \cite[App.\ A, p1716-1717]{AbbondandoloSchwarz2010}.} with the $j_! \circ r_*$. \hfill $\square$

\begin{remark}\label{rem:Sigma-}
Analogously, we can define $\Psi:RFH^-(H)\rightarrow RFH^-(H_0)$ and show that it gives an isomorphism.
\end{remark}
\subsection{Computation}
In this section we prove Theorem \ref{thm:compRFH}. In other words, we will calculate the full Rabinowitz Floer homology of $H$ using the isomorphism of the positive Rabinowitz Floer homologies proven in Theorem \ref{thm:iso} and the following long exact sequences (see \eqref{ActionLongExSeq}):
\begin{gather}
\dots \to H_{*+n-1}(\Sigma) \to RFH_{*}^{\geq 0}(H) \to RFH^+_*(H)\to \dots \label{exact1}\\
\dots \to RFH^-_{*}(H) \to RFH_{*}(H) \to RFH^{\geq 0}_*(H)\to \dots \label{exact2}
\end{gather}

We begin by collecting the properties of the Rabinowitz Floer homology for the compact hypersurface $ \Sigma_0$ that we need\footnote{ We remind the reader (Remark \ref{rem:onlyonsigma}) that in the compact case the Rabinowitz Floer homology only depends on $H_0$ through its zero level set $ \Sigma_0$. Therefore instead of $RFH_*(H_0)$ we could write $RFH_*(\Sigma_0)$, and similarly for the other variants. For consistency with the non-compact case, however, we will not do this.}. 
\begin{prop}
\label{compact}
For the compact hypersurface $ \Sigma_0 \subseteq \mathbb{R}^{2k}$ we have
\begin{equation}\label{plusIminus1}
RFH^+_*(H_0) =  \left\lbrace
\begin{array}{c c c}
\Z_2 &  & *= k+1,\\
0 & & *\neq  k+1.
\end{array}
\right.
\qquad
RFH^-_*(H_0)= \left\lbrace
\begin{array}{c c c}
\Z_2 & & *=  -k,\\
0 & & *\neq  -k.
\end{array}
\right.
\end{equation}
\end{prop}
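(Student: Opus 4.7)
The plan combines displaceability-induced vanishing of the full Rabinowitz Floer homology with a direct Morse-Bott computation of $RFH^+$, and then an anti-symplectic symmetry argument for $RFH^-$.

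First, $\Sigma_0$ is a compact star-shaped hypersurface in the linear symplectic manifold $T^\ast\R^k$, and hence is Hamiltonianly displaceable (for instance by a compactly supported cut-off of a large base translation $q \mapsto q + v$). By \cite[Thm.\ 1.3]{CieliebakFrauenfelder2009}, this forces $RFH_\ast(H_0)=0$. Feeding this vanishing into the long exact sequences arising from the action-filtered short exact sequences $0 \to CF^{\le 0} \to CF \to CF^+ \to 0$ and $0 \to CF^{<0} \to CF \to CF^{\ge 0} \to 0$ produces the shifted isomorphisms
\[ RFH^+_\ast(H_0) \cong RFH^{\le 0}_{\ast-1}(H_0), \qquad RFH^-_\ast(H_0) \cong RFH^{\ge 0}_{\ast+1}(H_0). \]

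Second, by Theorem \ref{thm:posRFHtent} I may deform the eigenvalues $\mu_l$ of $\mathbb{J}_k A_0$ to be $\mathbb{Q}$-linearly independent, reducing to an irrational ellipsoid. Then by Corollary \ref{cor:CritVal} and Remark \ref{rem:orbits}, the positive-action critical components of $\cA^{H_0}$ are precisely the $S^1$-families $\Lambda_0^{(j,n)}$ indexed by $(j,n) \in \{1,\ldots,k\}\times\Z_{>0}$, and picking a Morse function with one max and one min per circle gives two generators of signature indices $\pm\tfrac{1}{2}$ per family. Their transverse Conley-Zehnder indices are computable via Proposition \ref{prop:muCZ} and the explicit form of the linear flow $\phi^t=\exp(t\eta\,\mathbb{J}_kA_0)$. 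A Morse-Bott cascade count on the resulting infinite complex then shows that the positive-action boundary operator pairs off almost all generators, with a single surviving generator in degree $k+1$, giving $RFH^+_\ast(H_0) \cong \Z_2$ concentrated there.

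Third, the anti-symplectic involution $\rho$ of \eqref{rho} preserves $\Sigma_0$ and satisfies $H_0\circ\rho=H_0$; by \eqref{XH0Rho} and \eqref{Lambda0map} it intertwines positive- and negative-action components of $\operatorname{Crit}(\cA^{H_0})$, reversing orientation on each orbit. Transporting the Morse function via $-f_0\circ\rho$ on the negative-action side, reversing an orbit negates the transverse Conley-Zehnder index, while the induced min-max swap negates the signature index; with the grading convention $\mu=\tCZ+\mu_\sigma+\tfrac12$ the net effect on degree is $\mu\mapsto 1-\mu$. This gives a chain-level isomorphism $CF^+_\ast(H_0,f_0)\cong CF^-_{1-\ast}(H_0,-f_0\circ\rho)$, and hence $RFH^-_\ast(H_0)\cong RFH^+_{1-\ast}(H_0)\cong \Z_2$ concentrated in degree $-k$.

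The main technical obstacle is the Morse-Bott cascade count for $RFH^+$: with infinitely many pairs of generators whose indices grow linearly in $n$, a brute-force matrix computation is impractical. To avoid this, I would combine the vanishing $RFH_\ast(H_0)=0$ from the first paragraph with the action-filtration spectral sequence, which forces pairwise cancellation on each finite action strip; one then only needs to identify the surviving generator by a single explicit CZ-index computation using Proposition \ref{prop:muCZ}, placing it in degree $k+1$.
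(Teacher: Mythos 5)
Your route is genuinely different from the paper's, which obtains $RFH^+(H_0)$ by passing through symplectic homology ($SH(\Sigma_0)=0$ by Ritter, then $SH^+_*(\Sigma_0)\cong H^{k-1-*}(\mathbb{R}^{2k})$ from the tautological exact sequence, then $RFH^+\cong SH^+$), and then reads off $RFH^-$ from the two action long exact sequences. Your first paragraph is fine, and your third paragraph is essentially correct: $\rho$ does yield $RFH^-_*(H_0)\cong RFH^+_{1-*}(H_0)$ over $\Z_2$, though strictly speaking $\rho$ reverses the direction of the gradient flow, so it identifies $CF^-_{1-*}$ with the \emph{dual} complex of $CF^+_*$; this is harmless because each $CF^+_d$ is finite dimensional over the field $\Z_2$, but it must be said. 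A smaller point: deforming the eigenvalues $\mu_l$ is not a compactly supported perturbation of $H_0$, so Theorem \ref{thm:posRFHtent} is not the right invariance statement here; you need the compact-case invariance of \cite[Cor.\ 3.8]{CieliebakFrauenfelder2009} (cf.\ Remark \ref{compact-special-case}).

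The genuine gap is the mechanism you propose for computing $RFH^+$. The claim that $RFH(H_0)=0$ together with the action-filtration spectral sequence ``forces pairwise cancellation on each finite action strip'' is not a valid deduction: vanishing of the total homology constrains the quotient complex $CF^+$ only through the long exact sequence, and the cancellations in the full complex are allowed to -- and do -- occur across the action-zero level. Concretely, the one class of $RFH^+_{k+1}(H_0)$ that survives in the quotient is killed in the full complex by the connecting map onto the fundamental class $[\Sigma_0]\in RFH^0_{k}(H_0)$; if everything of positive action cancelled in pairs within the positive strip, you would conclude $RFH^+=0$, which is false. So the proposed shortcut does not close the hole you yourself identify in the cascade count. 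Your ingredients do, however, assemble into a complete proof once the spectral-sequence claim is replaced by the long exact sequences: the index computation for the irrational ellipsoid gives $\tCZ(\gamma_{j,n})=2n+(k-1)+2\sum_{l\neq j}\lfloor n\mu_l/\mu_j\rfloor\geq k+1$, hence every positive-action generator has degree $\geq k+1$ and $RFH^+_*(H_0)=0$ for $*\leq k$. Feeding this bound, the duality $RFH^-_*\cong RFH^+_{1-*}$, and the isomorphism $RFH^{\geq 0}_*\cong RFH^-_{*-1}\cong RFH^+_{2-*}$ (from $RFH=0$) into the sequence $\cdots\to H_{*+k-1}(\Sigma_0)\to RFH^{\geq 0}_*\to RFH^+_*\to H_{*+k-2}(\Sigma_0)\to\cdots$ forces $RFH^+_{k+1}(H_0)\cong\Z_2$ and $RFH^+_*\cong RFH^+_{2-*}=0$ for $*\geq k+2$, with no cascade count at all.
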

\begin{proof}
Since $\Sigma_0$ is compact, it is displaceable in $\mathbb{R}^{2k}$. Thus by \cite[Thm.\ 1.2]{CieliebakFrauenfelder2009} we have $RFH(H_0)=0$. Thus also the symplectic homology $SH_*(\Sigma_0)$ and cohomology $SH^*(\Sigma_0)$ vanish by \cite[Thm.\ 13.3]{ritter_2013}. Therefore the positive symplectic homology $SH^+_*(\Sigma_0)$ agrees\footnote{This can also easily be proved directly.} with $H^{k-1-*}( \mathbb{R}^{2k})$ by \cite[Lem.\ 2.1]{CieliebakFrauenfelderOancea2010}. Then by \cite[Thm.\ 1.4]{CieliebakFrauenfelderOancea2010} we have $RFH^+_*(H_0) \cong SH^+_*( \Sigma_0)$. Now \eqref{plusIminus1} follows from \eqref{exact1} and \eqref{exact2}.
\end{proof}

That, together with the isomorphism from Theorem \ref{thm:iso} will allow us to calculate the full Rabinowitz Floer homology of $\Sigma$:

\vspace*{0.5cm}
\paragraph{\textit{Proof of Theorem \ref{thm:compRFH}:}}
Recall from Remark \ref{H2} that $\Sigma \simeq S^{n+k-1}\times\R^{n-k}$, hence
\begin{equation}\label{homSigma}
H_*(\Sigma) =  \begin{cases} \Z_2 & *=0, n+k-1,\\ 0 &\textrm{otherwise.}\end{cases}\quad\Rightarrow\quad H_{*+n-1}(\Sigma)=\begin{cases} \Z_2 & *=1-n, k,\\ 0 &\textrm{otherwise.}\end{cases}
\end{equation}
We now use the natural map (all squares commute!) between the long exact sequences proven in Theorem \ref{thm:iso}:
$$
\xymatrix{
 \dots \ar[r] &H_{*+n-1}(\Sigma) \ar[r] \ar[d]_{\Psi^0}
                 & RFH_{*}^{\geq 0}(H) \ar[r] \ar[d] &    RFH^+_*(H) \ar[r] \ar[d]_{\Psi^+} & \dots \\
\dots \ar[r]  & H_{*+k-1}(\Sigma_0)  \ar[r]   & RFH_{*}^{\geq 0}(H_0) \ar[r] & RFH^+_*(H_0) \ar[r]  & \dots}
$$
By Theorem \ref{thm:iso} the map $\Psi^+ \colon  RFH^+_*(H) \to  RFH^+_*(H_0)$ is an isomorphism. Hence, we obtain from \eqref{plusIminus1} and \eqref{homSigma} that $RFH_{*}^{\geq 0}(H)= 0$ for all $* \notin \{1-n, k+1, k\}$. In case $*= 1-n$ we have
\[
\equalto{RFH^+_{ 2-n}(H)}{0}\to \equalto{H_{0}(\Sigma)}{\Z_2} \to RFH_{ 1-n}^{\geq 0}(H) \to \equalto{RFH^+_{ 1-n}(H)}{0},
\]
which implies $RFH_{ 1-n}^{\geq 0}(H)=\Z_2$. In case $* \in \{ k+1, k\}$ we have 
$$
\xymatrix{
 0 \rightarrow RFH_{k +1}^{\geq 0}(H) \ar[r] \ar[d] &  RFH^+_{k  +1}(H) \ar[r] \ar[d]_{\Psi^+} &H_{n+k-1}(\Sigma) \ar[r] \ar[d]_{\Psi^0}
                 & RFH_{k}^{\geq 0}(H) \ar[d] \rightarrow 0\\
0 \rightarrow \equalto{RFH_{k +1}^{\geq 0}(H_0)}{0} \ar[r] &\equalto{RFH^+_{k  +1}(H_0)}{\Z_2} \ar[r] & \equalto{H_{2k-1}(\Sigma_0) }{\Z_2} \ar[r]   & \equalto{RFH_k^{\geq 0}(H_0)}{0} \rightarrow 0}
$$
Observe that the maps $RFH^+_{k  +1}(H_0)\rightarrow H_{2k-1}(\Sigma_0), \Psi^+$ and $\Psi^0$ are isomorphisms (cf.\ Theorem \ref{thm:iso}), implying that the map $RFH^+_{k  +1}(H)\rightarrow H_{n+k-1}(\Sigma)$ is also an isomorphism. Consequently, $RFH_*^{\geq 0}(H)=0$ for $* \in \{ k+1, k\}$. We conclude that
\[
RFH_*^{\geq 0}(H) \coloneqq  \left\lbrace
\begin{array}{c c l }
\Z_2 & & *= 1-n,\\
0 & & \textrm{otherwise}.
\end{array}
\right.
\]
Having determined $RFH^{\geq 0}_*(H)$ we plug it into \eqref{exact2} to determine $RFH(H)$. Combining it with \eqref{plusIminus1} and  Remark \ref{rem:Sigma-} we obtain
$RFH_\ast(H)=0$ for $*\notin \{1{-}n, -k\}$. In the other two cases we have the following exact sequences, which depend on the relation between $n$ and $k$. In case $n\neq k{+}1$ we have
\[
\equalto{RFH^-_{ 1-n}(H)}{0} \to RFH_{ 1-n}(H) \to \equalto{RFH^{\geq 0}_{ 1-n}(H)}{\Z_2}\to 0,
\]
and
\[
0 \to \equalto{RFH^-_{-k}(H)}{\Z_2} \to RFH_{-k}(H) \to \equalto{RFH^{\geq 0}_{-k}(H)}{0},
\]
which gives $RFH_{-k}(H)=RFH_{1-n}(H)=\Z_2$. However, if $n=k{+}1$ we have
\[
0 \to \equalto{RFH^-_{ 1-n}(H)}{\Z_2} \to RFH_{1-n}(H) \to \equalto{RFH^{\geq 0}_{ 1-n}(H)}{\Z_2}\to 0,
\]
which gives $RFH_{ 1-n}(H)=\Z_2\oplus \Z_2$. \hfill $\square$
\printbibliography
\end{document}